\newdimen\bibspace
\renewenvironment{thebibliography}[1]{%
 \section*{\refname 
       \@mkboth{\MakeUppercase\refname}{\MakeUppercase\refname}}%
     \list{\@biblabel{\@arabic\c@enumiv}}%
          {\settowidth\labelwidth{\@biblabel{#1}}%
           \leftmargin\labelwidth
           \advance\leftmargin\labelsep
           \itemsep\bibspace
           \parsep\z@skip     %
           \@openbib@code
           \usecounter{enumiv}%
           \let\p@enumiv\@empty
           \renewcommand\theenumiv{\@arabic\c@enumiv}}%
     \sloppy\clubpenalty4000\widowpenalty4000%
     \sfcode`\.\@m}
    {\def\@noitemerr
      {\@latex@warning{Empty `thebibliography' environment}}%
     \endlist}
\newtheorem{thm}{Theorem}[section]
\newtheorem{lem}[thm]{Lemma}
\newtheorem{prop}[thm]{Proposition}
\newtheorem{rem}[thm]{Remark}
\def\Xint#1{\mathchoice
  {\XXint\displaystyle\textstyle{#1}}%
  {\XXint\textstyle\scriptstyle{#1}}%
  {\XXint\scriptstyle\scriptscriptstyle{#1}}%
  {\XXint\scriptscriptstyle\scriptscriptstyle{#1}}%
  \!\int}
\def\XXint#1#2#3{{\setbox0=\hbox{$#1{#2#3}{\int}$}
  \vcenter{\hbox{$#2#3$}}\kern-.5\wd0}}
\def\dashint{\Xint-}
\newcommand{\al}{\alpha}                \newcommand{\lda}{\lambda}
\newcommand{\om}{\Omega}                \newcommand{\pa}{\partial}
\newcommand{\va}{\varepsilon}           \newcommand{\ud}{\mathrm{d}}
\newcommand{\be}{\begin{equation}}      \newcommand{\ee}{\end{equation}}
\newcommand{\Lda}{\Lambda}              
\newcommand{\R}{\mathbb{R}}              \newcommand{\Sn}{\mathbb{S}^n}
\providecommand{\keywords}[1]{\textbf{Keywords} \textit{ #1}}
\begin{document}

\title{\textbf{Asymptotic behavior of solutions to the Yamabe equation with an asymptotically flat metric}
\footnote{Accepted for Publication on 4/20/2023  in Journal of Functional Analysis, https://doi.org/10.1016/j.jfa.2023.109982. ©2023. This accepted manuscript version is made available under the \href{https://creativecommons.org/licenses/by-nc-nd/4.0/}{CC-BY-NC-ND 4.0 license.}}}
\bigskip

\author{\medskip
Zheng-Chao Han,\ \ \ Jingang Xiong\footnote{J. Xiong was partially supported by the National Key R\&D Program of China No.\ 2020YFA0712900 and NSFC 11922104 and 12271028.},\  \ \
Lei Zhang \footnote{L. Zhang was partially supported by a collaboration grant of Simons Foundation (Award Number: 584918).}}

\date{ }

\maketitle

\begin{abstract} We prove that any positive solution of the Yamabe equation on an asymptotically flat $n$-dimensional manifold of flatness order at least $\frac{n-2}{2}$
and $n\le 24$ must converge at infinity either to a fundamental solution of the Laplace operator on the Euclidean space or to a radial Fowler solution defined on the entire Euclidean space. The flatness order $\frac{n-2}{2}$ is the minimal flatness order required to define ADM mass in general relativity; the dimension $24$ is the dividing dimension of the validity of compactness of solutions to the Yamabe problem. We also prove such alternatives  for bounded solutions when $n>24$.

 We prove these results by establishing appropriate asymptotic behavior
near an isolated singularity of solutions to the Yamabe equation when
the metric has a flatness order
of at least  $\frac{n-2}{2}$ at the singularity and $n\le 24$, also when $n>24$ and
the solution grows no faster than the fundamental solution of the
flat metric Laplacian at the singularity. These results extend earlier results of
L. Caffarelli, B. Gidas and J. Spruck, also of N. Korevaar, R. Mazzeo, F. Pacard and R. Schoen,
 when the metric is conformally flat,
 and work of C.C. Chen and C. S. Lin when the scalar curvature is a non-constant function with appropriate flatness at the singular point,  also work of F. Marques
when the metric is not necessarily conformally flat but smooth, and  the dimension of the manifold is
three, four, or five, as well as recent similar results by the second and third authors  in dimension six.
\end{abstract}

\keywords{asymptotic behavior, isolated singularity, Yamabe equation, asymptotically flat metric}

\section{Introduction}

On a compact smooth Riemannian manifold $(M, g)$ of dimension $n\ge  3$, the Yamabe problem, which concerns the existence of constant scalar curvature metrics in the conformal class of $g$, was solved affirmatively through Yamabe \cite{Y}, Trudinger \cite{T}, Aubin \cite{A} and Schoen \cite{S}. The problem is equivalent to solving the Yamabe equation
\[
-L_g u= n(n-2) \mathrm{sign}(\lda_1) u^{\frac{n+2}{n-2}}  \quad \mbox{on }M, \quad u>0,
\]
where $L_g = \Delta_g- c(n)R_g $ is the conformal Laplacian with $c(n)= \frac { (n-2) }{ 4(n-1) }$, $\Delta_g$ is the Laplace-Beltrami operator and $R_g$ is the scalar curvature of $g$, and $\mathrm{sign}(\lda_1)\in \{-1,0,1\}$ is the sign of the first eigenvalue of $-L_g$ on $M$.

Solutions of the Yamabe equation on the standard unit sphere $\Sn$ were classified by Obata \cite{O}. Namely,  they must be positive constants ($2^{-\frac{n-2}{2}}$ in our formulation)
modulo Mobius transforms.
The same conclusion was proved on $\Sn\setminus \{\mathcal{N}\}$ by Gidas-Ni-Nirenberg \cite{GNN79, GNN81} and   Caffarelli-Gidas-Spruck \cite{CGS}, where $\mathcal{N}$ is the north pole. Equivalently, the theorem on $\Sn\setminus \{\mathcal{N}\}$ asserts that every positive solution of the Yamabe equation with the flat background metric
\be \label{eq:flat-CGS}
-\Delta u= n(n-2)u^{\frac{n+2}{n-2}}
\ee
in $\R^n$ has to be the form $\lda^{\frac{n-2}{2}}(1+\lda^2|x -x_0|^2)^{\frac{2-n}{2}}$, where $\lda>0$ and $x_0\in \R^n$. This Liouville type theorem implies that there is no positive solution of the Yamabe equation on $\Sn$ which is singular only at one point.

In the same paper  \cite{CGS}, Caffarelli, Gidas and Spruck further studied  the isolated singularities of positive solutions of \eqref{eq:flat-CGS}. First, they classified all positive solutions of \eqref{eq:flat-CGS} in $\R^n\setminus \{0\}$ (or $\Sn\setminus\{\mathcal{N}, -\mathcal{N}\}$) with $0$ being a non-removable singularity by proving  that they are radially symmetric and solve an ODE studied by Fowler \cite{F}. We refer these radial singular solutions
on $\R^n\setminus \{0\}$ as \textit{Fowler solutions}. The radial symmetry was also obtained by \cite{GNN81}  under some decay assumption on the solution at infinity.
Second, they proved that every positive solution of \eqref{eq:flat-CGS} in the punctured unit ball $B_1\setminus \{0\}$ with $0$ being a non-removable singularity must converge to a Fowler solution:
\be \label{eq:CGS}
u(x)=u_0(x)(1+o(1))\quad \mbox{as }x \to 0,
\ee
where $u_0$ is a Fowler solution which can be written as
\[
u_0(|x|)= |x|^{-\frac{n-2}{2}} \psi(\ln |x|)
\] and $\psi$ is a positive periodic solution on $\R$ to
$\psi''(t)-\frac{(n-2)^{2}}{4}\psi(t)+n(n-2)\phi(t)^{\frac{n+2}{n-2}}=0$.  A different proof and refinement of results in  \cite{CGS} were given by  Korevaar-Mazzeo-Pacard-Schoen \cite{KMPS};
 in particular, they improved the $o(1)$ remainder term to  $O(|x|^{\al})$ for some $\al>0$.
See also  Chen-Lin \cite{ChenLin1} and Li \cite{Lc} for alternative arguments and generalizations of  \cite{CGS} in establishing the upper bound of $u$, and Q. Han-Li-Li \cite{HLL} for higher order expansion of  $O(|x|^{\alpha})$.

It is a natural problem to ask whether the theorem of Caffarelli, Gidas and Spruck \cite{CGS} on a  punctured ball still holds when the background metric is not conformally flat. This problem was solved positively, when the background metric is smooth, by Marques \cite{f-mar} for $3\le n\le 5$ and by Xiong-Zhang \cite{x-z-1} for $n=6$ recently.  In this paper, we prove that the theorem
of \cite{CGS}, as described by \eqref{eq:CGS}, continues to hold for
$3\le n\le 6$ as well as for $7\le n \le 24$, even if the metric $g$ is not smooth across $\{0\}$,
as long as it has an expansion at $0$ of flatness of order $\tau\ge \frac{n-2}{2}$, as given by
\eqref{eq:inner-flat} below.

The problem is also formulated as the study of  asymptotic behavior at $\infty$
of  solutions of the Yamabe equation with an asymptotically flat background metric.
\begin{equation}\label{main-eq}
-L_gu=n(n-2)u^{\frac{n+2}{n-2}}\quad \mbox{in }\mathbb R^n\setminus B_1, \quad u>0,
\end{equation}
where $g$ is a smooth Riemannian metric defined on $\R^n\setminus \bar B_1$  satisfying the  standard asymptotically flat condition
\be \label{eq:asym-flt}
\sum_{i,j=1}^n\left |\nabla ^m\Big(g_{ij}(x)-\delta_{ij}\Big)\right |\le C|x|^{-\tau-m} \quad \mbox{for }x\in \mathbb R^n\setminus B_1,
\ee
where $\tau\ge \frac{n-2}{2}$,  $C$ is a positive constant and $m=0,\dots, n+2$.  Note that $\tau> \frac{n-2}{2}$ is necessary to define the ADM mass in general relativity;  see Denisov-Solov'ev \cite{DS} and Bartnik \cite{bartnik}.  This exterior formulation \eqref{main-eq} is equivalent to the punctured unit ball setting \cite{CGS} if $g$ is flat, since \eqref{eq:flat-CGS} is invariant under Kelvin transforms.

Our main results are as follows.

\begin{thm}\label{thm:main}
Let $u$ be a solution of (\ref{main-eq}) with the metric $g$ satisfying the asymptotic flatness \eqref{eq:asym-flt}. If $3\le n\le 24$, then either there exists a positive constant $a$
such that
$$u(x)=a|x|^{2-n}+O(|x|^{1-n})
 \quad \mbox{as }x \to \infty $$
or there exist $\alpha \in (0,1)$ and a Fowler solution $u_0$ such that
$$u(x)=u_0(|x|)(1+O(|x|^{-\alpha})\quad \mbox{as }x \to \infty.  $$
\end{thm}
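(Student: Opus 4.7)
\medskip

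\noindent\textbf{Proof proposal.} The plan is to reduce to an isolated singularity problem via Kelvin transform, perform blow-up analysis at infinity, and then refine the convergence rate using ODE/linear theory.

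First, I would apply the Kelvin transform $v(y)=|y|^{2-n}u(y/|y|^2)$. Under this change of variables, the exterior problem on $\mathbb R^n\setminus B_1$ becomes a Yamabe-type equation on the punctured ball $B_1\setminus\{0\}$, and the asymptotic flatness hypothesis \eqref{eq:asym-flt} with order $\tau\ge (n-2)/2$ translates into an inner flatness expansion \eqref{eq:inner-flat} at $0$ with the same order. The dichotomy in the theorem then corresponds to whether $0$ is a removable singularity of $v$ (giving $v$ bounded, hence $u(x)=a|x|^{2-n}+O(|x|^{1-n})$ upon inverting the transform and doing a standard expansion) or a genuine non-removable singularity (to be matched with a Fowler profile).

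The bulk of the work is in the singular case. I would carry out the following steps in order. Step 1: prove the \emph{a priori} upper bound $v(y)\le C|y|^{-(n-2)/2}$ near $0$; this follows from a moving-sphere/moving-plane argument adapted to the non-flat metric, together with the Pohozaev identity to absorb the metric error terms, which is where the flatness order $\tau\ge(n-2)/2$ enters crucially. Step 2: establish the spherical Harnack inequality $\max_{|y|=r}v\le C\min_{|y|=r}v$ for small $r$. Step 3: perform blow-up analysis by selecting sequences $r_j\to 0$ and rescaling $v_j(z)=r_j^{(n-2)/2}v(r_jz)$; by Step 1 and standard elliptic theory these subconverge in $C^2_{\text{loc}}(\mathbb R^n\setminus\{0\})$ to a positive solution of the flat Yamabe equation on $\mathbb R^n\setminus\{0\}$ satisfying the upper bound, which by Caffarelli--Gidas--Spruck is a Fowler solution $u_0$. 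Step 4: show the Fowler limit is unique along all sequences and upgrade $o(1)$ to $O(|y|^\alpha)$ by linearizing the equation around $u_0$, analyzing the indicial roots of the linearized ODE in the $t=\log|y|$ variable, and running a contraction/fixed-point scheme on exponentially weighted spaces in the Delaunay parameter, along the lines of \cite{KMPS} and \cite{f-mar}.

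The main obstacle, and where the dimension restriction $n\le 24$ really bites, is Step 1: ruling out fast blow-up of $v$ near $0$. One must show that along any sequence $r_j\to 0$, the rescaled solutions $v_j$ cannot develop bubble concentrations whose centers accelerate to $0$; if they did, the Fowler limit and the spherical Harnack estimate would both fail. The tool is a Pohozaev-type sign identity: in the flat case the Pohozaev integrand evaluated on a standard bubble vanishes to leading order, while the metric error contributes a positive sign coming from the positive mass-type quantity, and blow-up is excluded. This is exactly the mechanism that underlies Schoen's Weyl-vanishing conjecture and Khuri--Marques--Schoen's compactness theorem, and the dimensional threshold $24$ appears because the sign of the relevant quadratic form in the Pohozaev calculation, computed against an $H^1$-bubble-tower test configuration, flips at $n=25$. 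For $3\le n\le 6$ the flatness hypothesis lets one adapt the arguments of \cite{f-mar} and \cite{x-z-1} almost verbatim, while for $7\le n\le 24$ one must carefully track the metric expansion through the Pohozaev balance. Once Step 1 is in place, Steps 2--4 are standard adaptations of \cite{CGS}, \cite{KMPS}, and the refinement in \cite{HLL}.
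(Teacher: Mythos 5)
Your outline is correct in broad strokes — Kelvin transform to a punctured-ball problem, moving-sphere upper bound with auxiliary functions to absorb the metric error, the Pohozaev quadratic-form sign dictating the $n\le 24$ threshold, spherical Harnack as a consequence, blow-down analysis and classification via Caffarelli--Gidas--Spruck. This matches the paper's structure (Sections 2--4 for the upper bound, and Section 5 for the rest). Your Step 4, which proposes a linearization/indicial-root scheme in the $t=\log|y|$ variable, is a legitimate alternative to the paper's citation of Taliaferro--Zhang for the sharp rate, though it would need to accommodate the $O(e^{-\tau t})$ perturbation in the linearized operator.

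However, there is a genuine gap in Step 3. You assert that in the non-removable case the rescaled solutions $v_j(z)=r_j^{(n-2)/2}v(r_jz)$ subconverge to a \emph{positive} solution, which is then a Fowler solution. This tacitly assumes the lower bound $v(y)\ge c\,|y|^{-(n-2)/2}$, which does not come for free from non-removability. A priori one could have $\liminf_{y\to 0}|y|^{(n-2)/2}v(y)=0$ while $\limsup>0$, in which case some blow-down limits would vanish identically (or have a removable singularity) and the Fowler classification gives nothing. Excluding this oscillatory scenario is a substantial part of the paper: one first uses the Pohozaev identity to show $P(u)\le 0$ with equality iff $\liminf=0$, and then a careful ODE analysis of the spherical average $w(t)=e^{(n-2)t/2}\bar u(e^{-t})$, which satisfies $w''-(\tfrac{n-2}{2})^2 w+n(n-2)w^{(n+2)/(n-2)}=O(e^{-\tau t}w)$, shows that $\liminf=0$ forces $\lim=0$ and hence removability. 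The subtlety is precisely at the critical flatness $\tau=(n-2)/2$: the cruder comparison estimates used by Chen--Lin and Marques (which effectively replace the $e^{-\tau t}$ coefficient by its endpoint value) are no longer sharp enough, and the paper introduces refined barrier constructions (Lemmas 6.2--6.3, using Bessel-type comparison functions $\cosh(a(t-t_2)-f(t))$ with $f(t)=\tfrac{b}{2a\tau}e^{-\tau t}$) to track the perturbation exactly. Without this step, the dichotomy ``removable vs.\ Fowler'' is asserted rather than proved. You would also then need to record that, once the two-sided bound holds, the Pohozaev limit is strictly negative, so all blow-down limits are genuine Fowler solutions rather than bubbles.
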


Both of these alternatives can happen. In higher dimensions, we have
\begin{thm}\label{thm:main-25}
Let $u$ be a solution of (\ref{main-eq}) with the metric $g$ satisfying the asymptotic flatness (\ref{eq:asym-flt}).
If $n\ge 25$ and
\be\label{eq:25-wk-bound}
\limsup_{x\to \infty} u(x)<\infty,
\ee
then the conclusion of Theorem \ref{thm:main} still holds.
\end{thm}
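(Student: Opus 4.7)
The plan is to reduce Theorem~\ref{thm:main-25} to the framework of Theorem~\ref{thm:main} by deriving, from the boundedness assumption \eqref{eq:25-wk-bound}, the Fowler-scale upper bound that drives the low-dimensional proof. The restriction $n \le 24$ in Theorem~\ref{thm:main} enters only when establishing this upper bound through the compactness of solutions to the Yamabe problem, which fails for $n \ge 25$; the subsequent steps (asymptotic radial symmetry via moving planes and ODE convergence to a Fowler solution) are dimension-independent. I would first apply the Kelvin inversion $v(y) := |y|^{2-n} u(y/|y|^2)$, converting the problem at infinity into the analysis of an isolated singularity at $0$ for $v$ on $B_{1/2}\setminus\{0\}$ with respect to a pulled-back metric $\hat g$ satisfying $|\nabla^m(\hat g_{ij}-\delta_{ij})(y)| \le C|y|^{\tau-m}$ at $0$, by conformal covariance of $L_g$. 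The boundedness hypothesis then becomes the singularity bound $v(y) \le C|y|^{2-n}$ near $0$.

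Next I would establish the dichotomy at $0$: either $v$ is bounded near $0$, or $v(y) \le C|y|^{-(n-2)/2}$ holds. The second bound places $v$ at the Fowler scale, which is the starting point of the proof of Theorem~\ref{thm:main}. To obtain this dichotomy I would perform a blow-up analysis: for any sequence $y_k \to 0$ with $v(y_k)\to\infty$, set $\mu_k := v(y_k)^{-2/(n-2)}\to 0$ and rescale $w_k(z) := \mu_k^{(n-2)/2} v(y_k + \mu_k z)$. The rescaled $w_k$ satisfies a Yamabe equation whose metric converges locally to the Euclidean one, with $w_k(0) = 1$. Depending on whether $|y_k|/\mu_k \to \infty$ or stays bounded, subsequential limits are either a standard bubble on $\R^n$ (by Caffarelli--Gidas--Spruck) or a Fowler solution on $\R^n\setminus\{z_0\}$; both yield $v(y_k) \le C|y_k|^{-(n-2)/2}$, which a covering and Harnack-chain argument upgrades to a full neighborhood of $0$. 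Crucially, the crude bound $v \le C|y|^{2-n}$ is used at this stage to suppress the slow-bubbling configurations responsible for non-compactness of the Yamabe problem in dimension $\ge 25$.

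Once the Fowler-scale bound is secured, the proof of Theorem~\ref{thm:main} applies verbatim: in the bounded case, an elliptic bootstrap using the expansion of $\hat g$ yields $v(y) = v(0) + O(|y|)$ and hence $u(x) = a|x|^{2-n} + O(|x|^{1-n})$; in the singular case, a moving-plane analysis adapted to the non-conformally-flat metric $\hat g$ in the spirit of Marques~\cite{f-mar} delivers asymptotic radial symmetry at $0$ with power rate $O(|y|^\alpha)$, and an ODE comparison with the Fowler family concludes. The main obstacle is precisely the blow-up dichotomy of the previous paragraph: the starting bound $v(y) \le C|y|^{2-n}$, being much weaker than the Fowler scale, offers limited compactness for the rescaled sequences, so extracting useful subsequential limits requires a careful scale selection together with the Liouville-type classifications of Caffarelli--Gidas--Spruck and of Fowler solutions to rule out runaway profiles.
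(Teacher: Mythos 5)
Your overall strategy matches the paper's: Kelvin-invert to reduce to an isolated singularity, establish the Fowler-scale upper bound, and note that the subsequent steps (Pohozaev criterion for removability, radial symmetry, convergence to a Fowler solution) are then dimension-independent. The boundedness hypothesis \eqref{eq:25-wk-bound} indeed translates to \eqref{eq:25-wk-bd-1} under the Kelvin transform, and it is used precisely to make the high-dimensional blow-up analysis go through. So the reduction and the identification of where $n\ge 25$ matters are correct.

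However, there is a genuine confusion in your blow-up dichotomy. If $|y_k|^{(n-2)/2}v(y_k)\to\infty$ (the contradiction hypothesis for the upper bound), then with $\mu_k=v(y_k)^{-2/(n-2)}$ one has $|y_k|/\mu_k=\bigl(|y_k|^{(n-2)/2}v(y_k)\bigr)^{2/(n-2)}\to\infty$, so the case ``$|y_k|/\mu_k$ bounded, limit is a Fowler solution'' does not occur; the rescaled limit is always the standard bubble $(1+|z|^2)^{-(n-2)/2}$. And the conclusion ``both yield $v(y_k)\le C|y_k|^{-(n-2)/2}$'' cannot be a direct consequence of having a nice blow-up limit, since it is exactly the statement you are trying to contradict. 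The actual contradiction must be extracted by a moving-spheres argument applied to the rescaled solution, comparing with its Kelvin reflection and compensating for the non-flat metric with specially constructed auxiliary functions (the paper's $f_\lambda^{(1)},\dots,f_\lambda^{(4)}$); simply having a bubble limit is not enough.

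More crucially, the specific role of \eqref{eq:25-wk-bd-1} for $n\ge 25$ is not to ``suppress slow bubbling'' in a soft sense, and your sketch leaves this step blank. The dimension restriction $n\le 24$ enters Theorem \ref{thm:main} because the Khuri--Marques--Schoen blow-up machinery (isolated-simple classification, positivity of the Pohozaev quadratic form) needs, in higher dimensions, the Weyl-vanishing-type condition \eqref{eq:weyl-vanishing}/\eqref{eq:weyl-vanishing-2} to hold. The paper uses \eqref{eq:25-wk-bd-1} together with the flatness \eqref{eq:inner-flat} to prove the quantitative estimate
\[
|\nabla^l\varpi_k(\bar z)|^2\le C|x_k|^{2\tau}\le C\,u_k(|x_k|\bar z)^{-1}\le C\,\varepsilon_{\bar z,k}^{n-2},
\]
which yields \eqref{eq:weyl-vanishing-3} and thereby makes Propositions \ref{prop:isol-to-isol-sim} and \ref{prop:KMS-8.2} applicable for $n\ge 25$; the same bound gives $|x_k|\le \bar C l_k^{-1/2}$ directly in Proposition \ref{prop:x-k-estimates}. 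Without this quantitative link between the a priori bound and the Weyl vanishing at blow-up points, the $n\ge 25$ argument does not close, and that is the missing idea in your proposal.
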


The existence of complete conformal metrics of positive constant scalar curvature on
 $\Sn\setminus \Lda$ and related problems have been studied by Schoen \cite{S1}, Schoen-Yau \cite{SY}, Mazzeo-Smale \cite{MS},  Mazzeo-Pollack-Uhlenbeck \cite{MPU}, Mazzeo-Pacard \cite{MP}, Bettiol-Piccione-Santoro \cite{BPS} and others, where $\Lda$ is a closed set. It was proved in \cite{SY} that the Hausdorff dimension of $\Lda$ is necessary to be at most $\frac{n-2}{2}$. See Byde \cite{Byde} and Silva Santos \cite{SS} on other manifolds.

 When the scalar curvature is negative, the first study of this problem goes back to Loewner-Nirenberg \cite{LoN}, where the dimension of $\Lda$ was proved to be at least $\frac{n-2}{2}$. Later work on the negative scalar curvature case was done on general manifolds by Aviles-McOwen \cite{AMc88}, Finn-McOwen \cite{FMc}, Andersson-Chr\'usciel-Friedrich \cite{ACF}, Mazzeo \cite{Mz} and etc. See also the recent paper Q. Han-Shen \cite{HS} and references therein for related work.

By performing a Kelvin transform,  \eqref{main-eq} is equivalent to
\begin{equation}\label{eq:main-isolated}
-L_{\hat g}u= n(n-2)u^{\frac{n+2}{n-2}}\quad \mbox{in } B_1\setminus\{0\} , \quad u>0,
\end{equation}
and \eqref{eq:asym-flt} translates into
\be
\label{eq:inner-flat}
|\nabla^m(\hat g_{ij}(x)-\delta_{ij})|\le C |x|^{\tau-m} \quad\text{in $B_1\setminus\{0\}$ for $0\le m\le n+2$.}
\ee
 Note that the scalar curvature $R_{\hat g}$ might not be continuous across $0$ if $n\le 6$, but
 we still have $|R_{\hat g}(x)|\le  C |x|^{\tau-2}$ in $B_1\setminus\{0\}$,  which implies that
 $|R_{\hat g}(x)|\in L^p(B_1)$ for some $p>n/2$. The general case of the De Giorgi-Nash-Moser theorem
 implies that any solution $u$ of \eqref{eq:main-isolated} which is
 {\it bounded} in $B_1\setminus\{0\}$ must extend to a H\"older continuous solution in $B_1$,
 and as a supersolution
 of $L_{\hat g} u= 0$ in $B_1$, $u(0)=\lim_{x\to 0}u(x)>0$, see, e.g., \cite{Stampacchia}.
 We refer to such a situation as $x=0$ being a  removable singularity, and the solution $u(x)$ as a classical solution.
The assumption \eqref{eq:25-wk-bound} in Theorem \ref{thm:main-25} for $n\ge 25$  translates  into
\be \label{eq:25-wk-bd-1}
\limsup_{x\to 0} |x|^{n-2} u(x)<\infty.
\ee

\begin{thm} \label{thm:pre-final}
Suppose that $u$ is a solution of \eqref{eq:main-isolated} with $\hat g$ satisfying \eqref{eq:inner-flat}.
 If $n\ge 25$, we assume further that $u$ satisfies \eqref{eq:25-wk-bd-1}.
 Then either $0$ is a removable singularity, or there exists a constant $C>0$ such that
\be \label{eq:up-low}
\frac{1}{C} |x|^{-\frac{n-2}{2}}\le u(x)\le C|x|^{-\frac{n-2}{2}} \quad \mbox{for }x\in B_{1/2}\setminus \{0\}.
\ee

Moreover, we have $P(u)\le 0$, and $P(u)=0$ if and only if $0$ is a removable singularity, where $P(u)$ is the Pohozaev limit of $u$ defined in \eqref{eq:pohozaev-lim}.
\end{thm}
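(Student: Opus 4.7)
The plan is to first establish the sharp upper bound $u(x) \leq C|x|^{-(n-2)/2}$ via a selection/blow-up argument in the spirit of Schoen, then deduce the matching lower bound and the removable/non-removable dichotomy from Harnack, and finally extract $P(u) \leq 0$ from the Pohozaev identity on $B_r \setminus \{0\}$.

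For the upper bound, I would argue by contradiction. Suppose $w(x) := |x|^{(n-2)/2} u(x)$ is unbounded near $0$. A Schoen-type selection produces $x_k \to 0$ with $w(x_k) \to \infty$ and $w(x_k)$ realizing the maximum of $w$ on an annulus of radius comparable to $|x_k|$. Set $\lambda_k := u(x_k)^{-2/(n-2)}$; by construction $\lambda_k/|x_k| \to 0$. The rescaling $v_k(y) := \lambda_k^{(n-2)/2} u(x_k + \lambda_k y)$ satisfies a rescaled Yamabe equation in the metric $g_k(y) := \hat g(x_k + \lambda_k y)$, and the flatness bound \eqref{eq:inner-flat} combined with $\lambda_k/|x_k| \to 0$ yields $g_k \to \delta$ in $C^\infty_{\mathrm{loc}}(\R^n)$. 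The selection provides a uniform local upper bound on $v_k$ in the $n \leq 24$ regime, and the assumption \eqref{eq:25-wk-bd-1} supplies it in the $n \geq 25$ regime. Passing to a subsequential limit gives a global positive solution $v_\infty$ of \eqref{eq:flat-CGS} with $v_\infty(0)=1$, which by Caffarelli--Gidas--Spruck must be a standard bubble. I then derive a contradiction at an intermediate scale $\lambda_k \ll r_k \ll |x_k|$ using the Pohozaev identity: on the $v_k$ side the identity is (up to an error that vanishes with $g_k \to \delta$) the flat Pohozaev integral and tends to the strictly negative constant associated with the standard bubble, while on the $u$ side the same integral (transported back to the original coordinates) is forced to be $o(1)$ by the maximality in the selection together with the $|x|^{-(n-2)/2}$-bound on $u$ at the outer scale.

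Once the upper bound is in hand, consider two subcases. If $u(x)|x|^{(n-2)/2} \to 0$ as $x \to 0$, then $u$ is bounded in a punctured neighborhood of $0$; the paragraph preceding the theorem (De Giorgi--Nash--Moser for $L_{\hat g}$, whose curvature term satisfies $|R_{\hat g}(x)|\le C|x|^{\tau-2}\in L^p$ for some $p>n/2$ by $\tau\geq (n-2)/2$) then extends $u$ to a positive H\"older-continuous classical solution across $0$, so $0$ is removable. Otherwise, take a sequence $x_k \to 0$ with $u(x_k)|x_k|^{(n-2)/2} \geq c_0 > 0$ and invoke the Harnack inequality for $L_{\hat g}$ on the annulus $\{|x_k|/2 \leq |x| \leq 2|x_k|\}$; combined with the already-established upper bound, the Harnack constant gives $u(x) \geq c|x|^{-(n-2)/2}$ on each such annulus, and as these annuli exhaust a punctured neighborhood, we obtain the lower half of \eqref{eq:up-low}.

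For the Pohozaev step, the differential Pohozaev identity on $B_r \setminus B_\varepsilon$ has the schematic form $P(u,r) - P(u,\varepsilon) = -\int_{B_r \setminus B_\varepsilon} \mathcal{E}(\hat g, u)$, where the error $\mathcal{E}$ collects metric-deviation terms bounded by $C|x|^{\tau - 2} u^2$ plus strictly lower-order pieces. The upper bound \eqref{eq:up-low} combined with $\tau \geq (n-2)/2$ makes $\mathcal{E}$ integrable over $B_r$, so $P(u) := \lim_{\varepsilon \to 0} P(u,\varepsilon)$ exists, and a careful tracing of signs along the lines of the smooth-metric computations of Marques and the six-dimensional extension of Xiong--Zhang cited in the introduction yields $P(u) \leq 0$. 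Equality forces the bulk integrand to vanish in the limit; together with the upper bound, this forces $u$ bounded near $0$, hence $0$ removable. The converse, removable $\Rightarrow P(u) = 0$, is a direct evaluation on a smooth bounded $u$. The main obstacle is the intermediate-scale Pohozaev contradiction inside the blow-up step: at the threshold flatness order $\tau = (n-2)/2$ every metric error term is only barely integrable, so the comparison between the $v_k$ side and the $u$ side must be tracked through scale-dependent estimates; in high dimensions the hypothesis \eqref{eq:25-wk-bd-1} is what prevents non-standard bubbling and forces the limit to be a single standard bubble.
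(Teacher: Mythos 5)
Your outline for the upper bound does not close at its key step. The contradiction you propose at an intermediate scale $\lambda_k\ll r_k\ll|x_k|$ is not available: the flat Pohozaev invariant of the standard bubble is $0$, not a strictly negative constant (the negative quantity in such arguments comes from the limit of $u(x_k)\,u$ near $x_k$, which here has the form $a|y|^{2-n}+b$ with a very large positive $b$ generated by the nearby singularity of $u$ at $0$), and the claim that the $u$-side of the identity is $o(1)$ "by maximality of the selection" is unjustified -- the selection only controls $u$ relative to $u(x_k)$ on $B_{\al_k}(x_k)$ with $\al_k<|x_k|/2$ and says nothing at the scale $\sim|x_k|$ where the singularity sits, while the bound $u\le C|x|^{-\frac{n-2}{2}}$ "at the outer scale" is exactly what is being proved. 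What the blow-up/Pohozaev (KMS-type) analysis actually yields in this setting is only the much weaker estimate $|x_k|^{n-2}u(x_k)\le C$, i.e.\ Proposition \ref{prop:x-k-estimates}, which is perfectly consistent with the contradiction hypothesis \eqref{eq:contra-hy-1}. This is why the paper (like CGS, Chen--Lin, Marques, Xiong--Zhang) proves Theorem \ref{main-ub} by the method of moving spheres: the KMS machinery (where $n\le 24$, or \eqref{eq:25-wk-bd-1} via the Weyl-vanishing condition \eqref{eq:weyl-vanishing-3}, really enters) is used only to produce Proposition \ref{prop:x-k-estimates} and the refined expansions needed to build the auxiliary functions $f_\lda$ of Sections \ref{sec:2} and \ref{sec:4}, and the contradiction comes from comparing \eqref{eq:vkup} with \eqref{eq:vkd}; moreover, since $\hat g$ is singular at $0$, the blow-up must be run in the conformal normal like coordinates of Lemma \ref{lem:conf-normal} with two-regime error estimates in $|y|\lessgtr\sigma|S_k|$, none of which is addressed by "$g_k\to\delta$ in $C^\infty_{loc}$".

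The lower-bound/removability part also has a gap. Your dichotomy is not exhaustive in the needed sense: Harnack on dyadic annuli gives the lower half of \eqref{eq:up-low} only if $\liminf_{x\to 0}|x|^{\frac{n-2}{2}}u(x)>0$, whereas a sequence with $|x_k|^{\frac{n-2}{2}}u(x_k)\ge c_0$ leaves open the genuinely hard case $\liminf=0<\limsup$, which your argument never excludes; ruling it out is Proposition \ref{prop:liminf-lim}, proved via the spherical-average ODE \eqref{key-i}, the refined two-sided estimates \eqref{t-large}--\eqref{bad-t} (needed precisely because $\tau=\frac{n-2}{2}$ is borderline; Marques' cruder bounds do not suffice), and the Pohozaev identity. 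Also, $u=o(|x|^{-\frac{n-2}{2}})$ does not by itself give boundedness: one first needs the convexity/decay argument for $w(t)$ to obtain $u\le C|x|^{-\delta}$ before elliptic regularity removes the singularity. Finally, for the Pohozaev statements, "equality forces the bulk integrand to vanish" is not a valid mechanism, since $P(u)$ is a limit of boundary integrals; the paper instead proves $P(u)\le 0$ and the rigidity by blowing down $r_k^{\frac{n-2}{2}}u(r_k\cdot)$ to a flat singular solution and invoking the CGS classification (Lemma \ref{lem:poho-crit}), and the implication $P(u)=0\Rightarrow$ removability again passes through Proposition \ref{prop:liminf-lim}.
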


\begin{thm} \label{thm:pre-final-1} Under the same assumptions as in Theorem \ref{thm:pre-final}, if $0$ is not a removable singularity, then there exist $\alpha \in (0,1)$ and a Fowler solution $u_0$ such that
$$u(x)=u_0(|x|)(1+O(|x|^{-\alpha})\quad \mbox{as }x \to 0.  $$

\end{thm}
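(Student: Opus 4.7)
The plan is to pass to the cylindrical variables $(t,\theta) = (-\log|x|,\, x/|x|)$ and set
\be
 v(t,\theta) = |x|^{\frac{n-2}{2}} u(x) = e^{-\frac{n-2}{2}t}\, u(e^{-t}\theta),
\ee
so that \eqref{eq:main-isolated} becomes, on a half-cylinder $(T,\infty)\times S^{n-1}$,
\be
 v_{tt} + \Delta_\theta v - \tfrac{(n-2)^2}{4}\, v + n(n-2)\, v^{\frac{n+2}{n-2}} = \mathcal{E}(t,\theta;v,\nabla v,\nabla^2 v),
\ee
where the error $\mathcal{E}$ collects the contributions from $\hat g-\delta$. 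By \eqref{eq:inner-flat} with $\tau\ge \tfrac{n-2}{2}$, $\mathcal{E}$ is controlled by $C e^{-\tau t}$ times $v$ and its spatial derivatives. The two-sided bound \eqref{eq:up-low} from Theorem \ref{thm:pre-final} rephrases as $1/C\le v\le C$ on the cylinder, and standard elliptic regularity (applied on unit $(t,\theta)$-cylinders) upgrades this to uniform $C^{k,\alpha}$ bounds.

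Next I would establish that the $\omega$-limit set of $v$ in $C^2_{\text{loc}}(\R\times S^{n-1})$ is contained in the Fowler family. For any sequence $t_j\to\infty$, the translates $v_j(t,\theta):=v(t+t_j,\theta)$ subconverge, by Arzel\`a--Ascoli, to a positive bounded solution $v_\infty$ of the unperturbed cylindrical equation (the $\mathcal{E}$-term vanishes since $e^{-\tau t_j}\to 0$). Unwinding the cylindrical change of variables turns $v_\infty$ into a solution of \eqref{eq:flat-CGS} on $\R^n\setminus\{0\}$ with non-removable singularity at $0$ that satisfies a two-sided bound, hence by the Caffarelli--Gidas--Spruck classification it is radial and coincides with a Fowler solution. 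Thus $v_\infty=v_\infty(t)$ is periodic. The Pohozaev identity applied on annuli, combined with the flatness $\tau\ge \tfrac{n-2}{2}$, shows that $P(u)$ is finite (Theorem \ref{thm:pre-final}) and equals the Hamiltonian of the limit ODE; this Hamiltonian pins down a single Fowler orbit $\{u_0(\cdot+s)\}_{s\in\R}$, so every subsequential limit is a translate of a fixed Fowler $v_0$.

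To upgrade subsequential convergence to genuine convergence with an algebraic rate, write $v(t,\theta)=v_0(t)+w(t,\theta)$ and linearize: $w$ satisfies $\mathcal{L}_{v_0} w = N(w) + \mathcal{E}$, with
\be
 \mathcal{L}_{v_0} w = w_{tt}+\Delta_\theta w - \tfrac{(n-2)^2}{4} w + \tfrac{n(n+2)}{n-2} v_0^{\frac{4}{n-2}} w.
\ee
Decompose $w$ into spherical harmonic modes $w=\sum_k w_k(t)Y_k(\theta)$. On the modes $k\ge 2$ the linear operator $\mathcal{L}_{v_0}$ has a uniformly positive gap, so $w_k$ decays exponentially in $t$ at a rate $\beta>0$ determined by the Floquet exponents of $\mathcal{L}_{v_0}$ on each subspace. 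The $k=0$ mode is the scaling/Hamiltonian direction; one kills its growing part using Pohozaev constancy (the mass $P(u)$ is locked), leaving the decaying branch. The $k=1$ translation modes are the delicate ones: here I would use that the first angular moments $\int_{S^{n-1}} v(t,\theta)\theta_i\, d\theta$ satisfy a second-order ODE with forcing of size $e^{-\tau t}$ and with decaying homogeneous solution, so that they decay at rate $\min(\beta,\tau)$. Putting these together yields $\|w(t,\cdot)\|_{L^\infty}\le C e^{-\alpha t}$ for some $0<\alpha<1$, equivalently $u(x)=u_0(|x|)(1+O(|x|^{-\alpha}))$.

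The main obstacle is the control of the $k=1$ (translation) modes together with the neutral/unstable directions of $\mathcal{L}_{v_0}$ in the presence of the non-conformally-flat perturbation: in the purely conformally flat case these are forced to vanish by CGS, but here one must combine the quantitative decay $\tau\ge (n-2)/2$ of the metric with a careful ODE/Fredholm analysis on each angular mode, choosing $\alpha$ just below the minimum of the Floquet gap of $\mathcal{L}_{v_0}$ on the stable subspaces and the metric decay rate $\tau-\tfrac{n-2}{2}$ (together with whatever room Theorem \ref{thm:pre-final} gives in the dimensions $n\le 24$, or the hypothesis \eqref{eq:25-wk-bd-1} for $n\ge 25$).
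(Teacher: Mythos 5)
Your outline follows the cylindrical/linearization route of \cite{KMPS}, whereas the paper disposes of this theorem in a few lines: using Lemma \ref{lem:sphere-harnack} and \eqref{eq:inner-flat} it rewrites the equation as $-\Delta u = K(x)u^{\frac{n+2}{n-2}}$ with $K(x)=n(n-2)+O(|x|^{\tau})$, notes that \eqref{eq:up-low} holds since $0$ is non-removable, and then invokes Theorem 1 of Taliaferro--Zhang \cite{talia} (with one step supported by \cite{HLT}). So you are in effect re-proving the quoted black box, and as a self-contained argument your sketch has genuine gaps.

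The most serious one is the treatment of the degree-one spherical-harmonic modes. The linearized operator $\mathcal{L}_{v_0}$ about a Fowler solution (whose zeroth-order coefficient is $n(n+2)v_0^{\frac{4}{n-2}}$, not $\tfrac{n(n+2)}{n-2}v_0^{\frac{4}{n-2}}$ as you wrote) has, on the $k=1$ subspace, Jacobi fields with indicial exponents of \emph{both} signs: besides the decaying field generated by translating the singular point there is an exponentially growing one coming from the conformal/translation symmetry at infinity. Your claim that the first angular moments obey an ODE ``with decaying homogeneous solution'' is therefore false as stated, and simply having forcing of size $e^{-\tau t}$ does not prevent the growing component; it must be excluded by an independent argument. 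In the conformally flat case this exclusion is exactly the asymptotic symmetry of \cite{CGS}/\cite{KMPS}, and in the perturbed setting it is obtained by a moving-spheres argument with auxiliary functions (as in \cite{ChenLin3}, \cite{talia}), a step your proposal never performs but cannot avoid. A related gap is the phase of the limit: the Pohozaev/Hamiltonian invariant pins down the Fowler \emph{orbit} but not the translation parameter, so writing $v=v_0(t)+w$ with a fixed $v_0$ and $w\to 0$ presupposes convergence of the phase, which itself requires the mode analysis (or an argument in the spirit of \L ojasiewicz/KMPS) that you have not supplied; likewise the $k=0$ neutral and linearly growing directions need the quantitative Pohozaev balancing spelled out with the $O(e^{-\tau t})$ errors. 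Until the $k=1$ growing mode and the phase-fixing are handled, the proposed derivation of the rate $O(|x|^{\alpha})$ does not go through.
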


Based on previous work in this area, a key in proving Theorems \ref{thm:main}, \ref{thm:main-25}, \ref{thm:pre-final}, \ref{thm:pre-final-1} is to prove, for an unbounded solution $u$ of \eqref{eq:main-isolated}, \eqref{eq:up-low} holds.

When $\hat g$ is conformally flat, the proofs of the upper bound in \eqref{eq:up-low} of Caffarelli, Gidas and Spruck \cite{CGS} and Korevaar-Mazzeo-Pacard-Schoen \cite{KMPS} rely on inversion symmetries of \eqref{eq:flat-CGS},  using different variants of the Alexandrov reflection. When the background metric is not conformally flat,
that approach is no longer directly applicable, as
$u-u_{reflection}$ now satisfies an inhomogeneous linear elliptic equation
 and thus a key ingredient of the reflection argument (maximum principle) is not directly available.

 Chen-Lin \cite{ChenLin2} introduced an idea of constructing suitable auxiliary functions,
 when employing the moving planes method, to compensate for the loss of invariance
 under Mobius transformations of  the prescribing scalar curvature equation
\[
-\Delta u= K u^{\frac{n+2}{n-2}} \quad \mbox{in }B_1
\]
when $K$ is a positive non-constant continuous function---they used the method
to deal with solutions with or without isolated singularities.  For the case of
a solution with an isolated singularity at $0$,
they obtained a sharp flatness criterion of  $K(x)$  in another paper \cite{ChenLin3}
 to have the asymptotic behavior \eqref{eq:CGS} for isolated singularities---their
 criterion turns out to be  $\frac{1}{C} |x|^{l-1} \le   |\nabla K(x)|\le   C|x|^{l-1}$
for some constants $C\ge 1$ and  $l\ge \frac{n-2}{2}$.
 A counterexample was constructed when the flatness is less than $\frac{n-2}{2}$.
 See also Zhang \cite{Z}, Taliaferro-Zhang \cite{talia} and Lin-Prajapat \cite{Lin-P-1, Lin-P-2}.
 This idea of constructing auxiliary functions was adapted and developed to study the compactness of solutions to the Yamabe equation by Li-Zhang \cite{zhang-li-low-dim, LZ2, LZ3} via the moving spheres method, and to study isolated singularities of the Yamabe equation by previously mentioned paper Marques \cite{f-mar} and Xiong-Zhang \cite{x-z-1} for $n\le 6$.
The situation in these papers
is somewhat different from that of the prescribing scalar curvature equation in that
one can not impose some condition analogous to  the positive lower bound
 $ |\nabla K(x)|\ge \frac{1}{C} |x|^{l-1}$ as in \cite{ChenLin3}.

The auxiliary functions in \cite{f-mar} and \cite{x-z-1} are radially symmetric.
In our situation, we need to
construct non-radial auxiliary functions and prove some needed quantitative estimates.
A major part of these estimates is
 proved by applying and refining the blow up analysis developed in the studies of
 compactness of solutions to the Yamabe problem by  \cite{S3, LZhu99, D, f-mar-1, LZ2, LZ3, KMS}
 up to dimension $24$. In particular, for $n\ge 7$,
 we adapt some arguments from Li-Zhang \cite{LZ2, LZ3} and
 Khuri-Marques-Schoen \cite{KMS}, where
 the spectral analysis of the linearized Yamabe equation at the spherical solutions played
 an important role. However, our situation is  different from theirs, as the solutions
  and the metric  contain singular points and the blow up analysis is implemented near those points.
    If $n\ge 25$, the assumption \eqref{eq:25-wk-bd-1} is used to ensure the desired sign of
 the Pohozaev integral in the notation of \cite{KMS} for some specific blowing up sequence of solutions.
  Note that the compactness of solutions to the Yamabe problem on smooth compact manifolds
  which are not conformal diffeomorphic to the unit sphere fails in dimension $n\ge 25$,
  see Brendle \cite{Brendle} and Brendle-Marques \cite{BM}.

 Once we have the upper bound, the lower bound in \eqref{eq:up-low} is proved via the method
 of \cite{ChenLin2} and \cite{f-mar}, based on an analysis of behavior of solutions of
 an ordinary differential inequality satisfied by the spherical average of the solution and
 the Pohozaev integral.  However, there is a non-trivial linear term in the
 differential inequality of the spherical average of solutions
 in our case, which causes a technical issue when $\tau=\frac{n-2}{2}$.
 When $n=6$ and the background metric is smooth,
 this issue was solved by finding a good conformal metric in \cite{x-z-1}. Here we prove
 a refined ODE type estimate to prove the lower bound in \eqref{eq:up-low}.

The results of  \cite{CGS} and \cite{KMPS} have been extended to some fully nonlinear Yamabe equations or higher order conformally invariant equations, see, for example, Li-Li \cite{LiLi},   Li \cite{Li}, Chang-Han-Yang \cite{CHY}, Han-Li-Teixeira \cite{HLT}, Jin-Xiong \cite{JX}, and the references therein.

\medskip

The paper is organized as follows.  In Section \ref{sec:2}, we reduce the exterior problem to \eqref{eq:main-isolated} and outline a proof of the upper bound of solutions with an isolated singularity in Theorem \ref{main-ub},
deferring the details of the construction of auxiliary functions used in the proof
to Section \ref{sec:4}. In Section \ref{sec:blow-up}, we recall some local blow up analysis
for smooth solutions of the Yamabe equation up to dimension $24$, and set up conformal normal like coordinates to be used later.  In Section \ref{sec:4}, we provide details in proving the upper bound in Theorem \ref{main-ub}.
We divide the construction of the auxiliary function used in the moving spheres argument
 into the cases $3\le n\le 6$ and $ n\ge 7$. The difficulty of  former case lies in the singularity of the metric while that of  the latter lies in high dimensional effect.
In Section \ref{sec:lb}, we prove the lower bound and give a criterion of removability in terms of the sign of the Pohozaev integral. In Section \ref{sec:ode}, we provide details of some improved ODE type estimates for the spherical average of the solution, which are used in Section \ref{sec:lb}.

\medskip

\noindent {\bf Acknowledgment:} This work was started when J. Xiong was visiting Rutgers University during the academic year 2019--2020, to which he is grateful for providing the very stimulating research environments and supports. All authors would like to thank
Professor YanYan Li for valuable discussions and comments. This work was posted on arXiv as arXiv:2106.13380v2.

\section{Reduction to an isolated singularity and outline proof of the upper bound of solutions with an isolated singularity}
\label{sec:2}

Let $u$ be a positive solution of \eqref{main-eq} with the metric  $g$ satisfying \eqref{eq:asym-flt}. We shall use an inversion to transform the problem into
one with an isolated singularity on the punctured unit ball.
For any $x\in \R^n\setminus B_1$, let $x= \frac{z}{|z|^2}$ for $z\in B_1\setminus \{0\}$.
Then we see that
\begin{align*}
u(x)^{\frac{4}{n-2}} g_{ij}(x)\ud x^i \ud x^j&=\Big(\frac{1}{|z|^{n-2}}u(\frac{z}{|z|^2})\Big)^{\frac{4}{n-2}} |z|^4 g_{ij}(\frac{z}{|z|^2}) \ud (\frac{z^i}{|z|^2})\ud (\frac{z^j}{|z|^2})\\&=:
v(z)^{\frac{4}{n-2}} \hat g_{kl}(z) \ud z^k \ud z^l ,
\end{align*}
where $v(z)= \frac{1}{|z|^{n-2}}u(\frac{z}{|z|^2})$, and
\begin{align}
 \hat g(z) := \hat g_{kl} (z)\ud z^k \ud z^l
 =\sum_{i,j} g_{ij}(\frac{z}{|z|^2})  (\delta_{ik}  - 2z^i z^k )
  (\delta_{il}-  2z^i z^l )  \ud z^k \ud z^l.
\label{eq:h-def}
\end{align}
By \eqref{eq:asym-flt}, we have
\[             
\sum_{k,l=1}^n\left |\nabla ^m\Big(\hat g_{kl}(z)-\delta_{kl}\Big)\right |\le C_0 |z|^{\tau-m} \quad \mbox{for }z\in  B_1 \setminus \{0\},
~ m=0,1,...,n+2,
\]
which is the same as \eqref{eq:inner-flat}. By the conformal invariance of $L_g$,
\be \label{eq:main-u1}
-L_{\hat g}v= n(n-2) v^{\frac{n+2}{n-2}} \quad \mbox{in }B_1\setminus \{0\}.
\ee
Hence, the study of solutions of \eqref{main-eq} with the metric  $g$ satisfying \eqref{eq:asym-flt} has been
reduced to the study of solutions of \eqref{eq:main-u1} with  the metric  $\hat g$ satisfying \eqref{eq:inner-flat}.  If the assumption \eqref{eq:25-wk-bound} in Theorem \ref{thm:main-25} holds, then
it implies the upper bound \eqref{eq:25-wk-bd-1} for $v$:
\[
\limsup_{z\to 0}|z|^{n-2} v(z) <\infty.
\]

To avoid using too many variables, from now on we will rename the $z$ variable in $B_1\setminus \{0\}$
as $x$, $\hat g$ as $g$, and $v(z)$ as $u(x)$, namely, we will study $u(x)$, which satisfies \eqref{eq:main-isolated}.

\begin{thm}\label{main-ub} Let $u$ be a solution of \eqref{eq:main-isolated} with the metric $g$ satisfying \eqref{eq:inner-flat}.
  When $n\ge 25$, suppose that \eqref{eq:25-wk-bd-1} holds.  Then
\begin{equation}\label{sphe-har-e}
\limsup_{x\to 0}|x|^{\frac{n-2}2} u(x)<\infty.
\end{equation}
\end{thm}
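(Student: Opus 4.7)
The plan is to argue by contradiction through a blow-up analysis combined with a moving spheres argument relying on carefully constructed non-radial auxiliary functions. Suppose \eqref{sphe-har-e} fails, so that there exists a sequence $x_k\to 0$ with $|x_k|^{(n-2)/2} u(x_k)\to\infty$. I would first apply a Schoen-type selection procedure (as in \cite{S3, LZhu99, f-mar, KMS}) to replace $x_k$ by ``peak'' points $p_k\to 0$ with $M_k:=u(p_k)\to\infty$, arranged so that $u$ is controlled by $M_k$ on a quantitatively large rescaled neighborhood of $p_k$.

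Setting $\va_k=M_k^{-2/(n-2)}$ and $v_k(y)=\va_k^{(n-2)/2}u(p_k+\va_k y)$, the rescaled function $v_k$ solves a Yamabe equation with respect to the pulled-back metric $g_k(y):=g(p_k+\va_k y)$. Because $\tau\ge (n-2)/2$, the metrics $g_k$ converge to the Euclidean metric on compact sets of $\mathbb R^n$, and $v_k$ converges locally to a standard spherical solution $U$. The heart of the argument is then a moving spheres argument centered at $p_k$: one shows that for every radius $\lda$ in a range with a positive liminf, the $g$-Kelvin reflection $v_k^{\lda}$ lies below $v_k$ on the relevant annulus. Because $g$ is not conformally flat, the difference $w=v_k-v_k^{\lda}$ satisfies a linear elliptic equation with an inhomogeneous remainder that the plain maximum principle cannot absorb; following Chen-Lin \cite{ChenLin2, ChenLin3}, I would add a correction $\phi_k$ so that $w+\phi_k$ satisfies a differential inequality to which a maximum principle applies, yielding $v_k^{\lda}\le v_k$.

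The main obstacle is the construction of $\phi_k$ with sharp quantitative control, and it is exactly here that the dimension splits. For $3\le n\le 6$ the difficulty is the low regularity of $g$ at the origin: a radial auxiliary function as in \cite{f-mar, x-z-1} does not suffice because $p_k\ne 0$, so I would build a non-radial $\phi_k$ adapted to the $|p_k|^\tau$-sized deviation of $g$ from flatness, working in conformal normal-type coordinates constructed around $p_k$ rather than around $0$ (using the coordinates set up in Section \ref{sec:blow-up}). For $n\ge 7$ the obstructions are high-dimensional and are governed by the spectral decomposition of the linearized Yamabe operator at the bubble $U$; the auxiliary function must be built compatibly with this decomposition, in the spirit of \cite{LZ2, LZ3, KMS}. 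The resulting Pohozaev sign condition that the construction demands is automatic precisely for $n\le 24$, whereas for $n\ge 25$ the extra hypothesis \eqref{eq:25-wk-bd-1} is invoked exactly to fix the sign of the Pohozaev quantity for the blow-up sequence; I expect this Pohozaev sign verification, together with matching the non-radial $\phi_k$ against the rescaled singular metric, to be the technically deepest step.

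Once the reflection inequality $v_k^{\lda}\le v_k$ holds for a definite range of $\lda$, a now-standard argument as in \cite{CGS, KMPS, f-mar} converts it into the pointwise bound $v_k(y)\le C(1+|y|)^{2-n}$ on large rescaled annuli. Unscaling back to $u$ and evaluating at $x_k$ contradicts the hypothesized divergence $|x_k|^{(n-2)/2} u(x_k)\to\infty$, and thereby establishes \eqref{sphe-har-e}.
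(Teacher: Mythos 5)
Your overall architecture is the paper's: contradiction hypothesis, Schoen-type selection of peak points, rescaling to a bubble in conformal normal coordinates centered at the peak, moving spheres with a non-radial auxiliary correction, the $3\le n\le 6$ versus $n\ge 7$ split in constructing that correction, and the role of the Pohozaev sign with \eqref{eq:25-wk-bd-1} invoked for $n\ge 25$. This matches Sections 2--4. One small terminological slip: there is no $g$-adapted Kelvin transform in play; the paper applies the flat Kelvin transform to $v_k$, and the resulting loss of invariance is exactly what produces the inhomogeneous remainder $E_{\lambda}$ to be absorbed by the auxiliary function.

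Where you go off track is the closing step. You propose to turn the reflection inequality $v_k^{\lambda}\le v_k$ into a pointwise decay bound $v_k(y)\le C(1+|y|)^{2-n}$ and then unscale to contradict the divergence of $u(x_k)$. This does not work as stated. The moving-spheres conclusion $v_k - v_k^{\lambda} + f_{\lambda}\ge 0$ is a comparison from below, not an upper decay estimate; and an upper bound at $y=0$ would say nothing anyway, since $v_k(0)=1$ by normalization and the divergence of $u(x_k)$ is already encoded in $l_k\to\infty$. The paper's contradiction is direct, and it is the reason the moving sphere must be pushed past $\lambda=1$: by \eqref{eq:model-fact}, for $\lambda>1$ and $|y|>\lambda$ one has $U^{\lambda}(y)>U(y)$, so the convergence $v_k\to U$ in $C^2_{\mathrm{loc}}$ forces $v_k(y)-v_k^{\lambda}(y)<-c_1(n)(\lambda-1)(|y|-\lambda)|y|^{1-n}$ for $k$ large; this is incompatible with $v_k-v_k^{\lambda}+f_{\lambda}\ge 0$ once $f_{\lambda}\to 0$. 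Reaching $\bar\lambda=1+\delta_1>1$ and then letting $k\to\infty$ is the entire endgame --- no global decay estimate for $v_k$ is derived or needed in this step. You should replace your final paragraph with this direct incompatibility.
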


When $g$ is a smooth metric in $B_1$, Theorem \ref{main-ub} was proved by Marques \cite{f-mar} for $n\le 5$ and by Xiong-Zhang \cite{x-z-1} for $n=6$.

Our proof of Theorem \ref{main-ub} follows the classical approach initiated by Schoen,
further developed by many authors over the last three  decades as described earlier;
we do need to overcome difficulties caused by
the potential singularity of the metric at $0$
when $3\le n\le 6$ and the high dimensional effect when $ n\ge 7$.
We will discuss the technical aspects of this analysis a bit later, much of which  will follow the approach in \cite{x-z-1}.

For now, we will first outline the setup for proving
Theorem \ref{main-ub}. Clearly, it suffices to consider
\[
\tau=\frac{n-2}2.
\]  If \eqref{sphe-har-e} were invalid,  there would exist a sequence $x_k\to 0$ such that
\be\label{eq:contra-hy-1}
|x_k|^{\frac{n-2}2} u(x_k)\to \infty \quad \mbox{as }k\to \infty.
\ee

\begin{lem}\label{lem:blow-up} The sequence  $x_k$ in \eqref{eq:contra-hy-1} can be selected to be local maximum points of $u$. Moreover,  there exists $0<\al_k<|x_k|/2$ such that
\[
u(x)\le 2^{\frac{n-2}{2}}  u(x_k) \quad \forall~x\in B_{\al_k}(x_k)
\]
and
\[
\lim_{k\to \infty} u(x_k) \al_k^{\frac{n-2}{2}} =\infty.
\]
\end{lem}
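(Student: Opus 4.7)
The plan is to apply a Schoen-style selection argument to the sequence $\{x_k\}$ in \eqref{eq:contra-hy-1}. First, set $r_k = |x_k|/2$, so that $\overline{B_{r_k}(x_k)} \subset B_1 \setminus \{0\}$, and introduce the auxiliary function
\[
\phi_k(x) = \bigl(r_k - |x - x_k|\bigr)^{(n-2)/2} u(x), \quad x \in \overline{B_{r_k}(x_k)}.
\]
Since $\phi_k$ vanishes on $\partial B_{r_k}(x_k)$ but $\phi_k(x_k) = (|x_k|/2)^{(n-2)/2} u(x_k) \to \infty$ by \eqref{eq:contra-hy-1}, $\phi_k$ attains its maximum at an interior point, which I relabel $\bar{x}_k$. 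Set $s_k = r_k - |\bar{x}_k - x_k| > 0$ and $\alpha_k = s_k/2 < |x_k|/2$.

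Both quantitative estimates of the lemma then follow from the maximality of $\bar{x}_k$ via the triangle inequality. For any $x \in B_{\alpha_k}(\bar{x}_k)$ one has $r_k - |x - x_k| \ge s_k - \alpha_k = \alpha_k$, and hence
\[
u(x) \le \frac{\phi_k(\bar{x}_k)}{(r_k - |x - x_k|)^{(n-2)/2}} \le \frac{(2\alpha_k)^{(n-2)/2} u(\bar{x}_k)}{\alpha_k^{(n-2)/2}} = 2^{(n-2)/2} u(\bar{x}_k).
\]
Similarly, $\phi_k(\bar{x}_k) \ge \phi_k(x_k) = (|x_k|/2)^{(n-2)/2} u(x_k)$ together with the identity $\phi_k(\bar{x}_k) = (2\alpha_k)^{(n-2)/2} u(\bar{x}_k)$ and \eqref{eq:contra-hy-1} gives $u(\bar{x}_k)\, \alpha_k^{(n-2)/2} \to \infty$. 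Finally $|\bar{x}_k| \le 3|x_k|/2 \to 0$, so $\bar{x}_k \to 0$.

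To finish, I upgrade $\bar{x}_k$ to a genuine local maximum of $u$. Let $\tilde{x}_k$ be a maximizer of $u$ on the compact set $\overline{B_{\alpha_k/2}(\bar{x}_k)}$; the preceding bound and the inclusion $B_{\alpha_k/2}(\tilde{x}_k) \subset B_{\alpha_k}(\bar{x}_k)$ give $u(\bar{x}_k) \le u(\tilde{x}_k) \le 2^{(n-2)/2} u(\bar{x}_k)$ and $u \le 2^{(n-2)/2} u(\tilde{x}_k)$ on $B_{\alpha_k/2}(\tilde{x}_k)$, so the bubble radius and the upper-bound property transfer to $\tilde{x}_k$ up to halving $\alpha_k$. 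If $\tilde{x}_k$ is interior it is already a local maximum; otherwise, a short gradient-ascent of $u$ started at $\tilde{x}_k$ stays in the compact set $\overline{B_{\alpha_k}(\bar{x}_k)}$ (thanks to the uniform upper bound), produces a monotone-increasing $u$-trajectory, and must accumulate at an interior critical point of $u$, which yields the desired local maximum. Relabeling this point as $x_k$ and the corresponding radius as $\alpha_k$ produces the sequence claimed by the lemma.

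The conceptually substantive step is the Schoen selection through $\phi_k$; the concentration radius, the upper bound, and the convergence $\bar{x}_k \to 0$ are then routine triangle-inequality manipulations. The only slightly delicate point is the final upgrade to a true local maximum, which is handled by the standard iteration/flow argument using the a priori upper bound on $u$ in the bubble ball.
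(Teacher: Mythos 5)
The Schoen-type selection with the weight $\phi_k(x)=(r_k-|x-x_k|)^{(n-2)/2}u(x)$ and the two triangle-inequality estimates are correct and are indeed the standard first step. The gap is in your final ``upgrade to a genuine local maximum.'' The gradient-ascent argument does not work: the bound $u\le 2^{(n-2)/2}u(\bar x_k)$ on $B_{\alpha_k}(\bar x_k)$ says nothing about the direction of $\nabla u$, so the ascending trajectory can exit $\overline{B_{\alpha_k}(\bar x_k)}$ well before $u$ gets anywhere near that bound (the bound is an inequality, not an identity on the boundary). And even if the trajectory were trapped, its $\omega$-limit set is only guaranteed to consist of critical points, which need not be local maxima. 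This is not a cosmetic issue; it is the non-trivial part of the lemma.

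The missing ingredient, which is how the reference \cite{x-z-1} (and all the standard references: Schoen, KMPS, Marques) produce the local maximum, is the blow-up analysis plus the Caffarelli--Gidas--Spruck classification and the nondegeneracy of the bubble. Set $\epsilon_k=u(\bar x_k)^{-2/(n-2)}$ and $v_k(y)=u(\bar x_k)^{-1}u(\bar x_k+\epsilon_k y)$; then $v_k(0)=1$, $0<v_k\le 2^{(n-2)/2}$ on $B_{\alpha_k/\epsilon_k}$ with $\alpha_k/\epsilon_k\to\infty$, and (using \eqref{eq:inner-flat} and that $\epsilon_k=o(\alpha_k)=o(|\bar x_k|)$) the rescaled equation converges to the flat one. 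Also, $\nabla\phi_k(\bar x_k)=0$ gives $|\nabla u(\bar x_k)|\sim u(\bar x_k)/s_k$, hence $\nabla v_k(0)\to 0$. By elliptic estimates a subsequence of $v_k$ converges in $C^2_{loc}(\R^n)$ to a positive solution $V$ of $-\Delta V=n(n-2)V^{\frac{n+2}{n-2}}$ with $V(0)=1$, $\nabla V(0)=0$; by \cite{CGS} (bounded case), $V=U=(1+|y|^2)^{\frac{2-n}{2}}$, which has a nondegenerate strict maximum at $0$. The $C^2_{loc}$ convergence and the implicit function theorem then produce, for large $k$, a local maximum $y_k\to 0$ of $v_k$ with $\nabla^2 v_k(y_k)<0$. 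Translating back, $\hat x_k=\bar x_k+\epsilon_k y_k$ is a local maximum of $u$, and since $\epsilon_k|y_k|=o(\alpha_k)$, the concentration-radius and upper-bound properties transfer to $\hat x_k$ after shrinking $\alpha_k$ by a fixed factor (and, if one insists on the exact constant $2^{(n-2)/2}$, using $u(\hat x_k)=(1+o(1))u(\bar x_k)$). This is the substantive step your proof is missing.
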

\begin{proof}
The proof is standard by now. See page 3 of \cite{x-z-1} for details.
\end{proof}

We will do a blow up analysis of $u(x)$ near $x_k$. To carry out a more refined analysis needed later on,
we will work with a conformal normal like coordinate system $\{ z \}$ centered at $x_k$. The precise formulation
will be given as  Lemma \ref{lem:conf-normal} in section \ref{sec:blow-up}.  The key is to set up this coordinate system
on a common ball centered at $z=0$ for a certain neighborhood in metric $g$ ``centered at" $x_k$
which includes $B_{\frac 12}(x_k)$, via a coordinate map $\phi_{x_k}(z)$ with appropriate control,
and with $\phi_{x_k}(0)=x_k$, $\phi_{x_k}(-x_k)=0$.

  We will work with $u_k(z)=(\kappa_{x_k} u) \circ \phi_{x_k}(z)$
for $z\in B_{1/2}\setminus \{-x_k\}$,
where $\kappa_{x_k}, \phi_{x_k}$, and $g_{k}(z)$ are defined as in Lemma \ref{lem:conf-normal}, with $x_{k}$ as $\bar z$ there, \eqref{eq:varkappa} holds for $g_{k}(z)$, and
\be  \label{eq:coord-u}
\det g_{k}(z) =1 \quad \mbox{in }B_{\sigma |x_k|}.
\ee
As usual, the analysis in this work uses multiple rescalings and
multiple coordinate systems. To avoid excessive notations, we may reuse the same variable
names in different contexts. For example $x_k$ is selected in the $x$ coordinate in $B_1\setminus\{0\}$
for $u$ according to Lemma \ref{lem:blow-up}, but in the conformal normal like coordinate $z$,
the $x=0$ point corresponds to $z=-x_k$, so we are using $x_k$ in two different coordinate system.

According to Lemma \ref{lem:conf-normal},
$u_k(z)$ is well defined for $z\in B_{1/2}\setminus \{-x_k\}$ for large $k$, and
\be \label{eq:u-conformal-normal}
-L_{g_k} u_k(z)= n(n-2)u_k(z)^{\frac{n+2}{n-2}} \quad \mbox{in }B_{1/2}\setminus \{-x_k\}.
\ee

Let
$$M_k= u_k(0), \quad l_k=M_k^{\frac{2}{n-2}}, \quad S_k=-l_kx_k, $$
and
$$v_k(y)=M_k^{-1} u_k(l_k^{-1}y), \quad y\in B_{\frac{l_k}{2}} \setminus \{S_k\}. $$
 By Lemma \ref{lem:blow-up},
$\lim_{k\to \infty}|S_k|= \infty$. By  equation \eqref{eq:u-conformal-normal} of $u_k$, we have
\begin{equation}\label{v-k}
-L_{\bar g_k}v_k(y)=n(n-2)v_k(y)^{\frac{n+2}{n-2}}\quad \mbox{in }B_{\frac{l_k}{2}} \setminus \{S_k\},
\end{equation}
where $(\bar g_k)_{ij}(y) =(g_k)_{ij}(l_k^{-1}y)$.

By Lemma \ref{lem:blow-up} and Lemma \ref{lem:conf-normal}, we know that $x_k=\phi_{x_k}(0)$
is a critical point of $u$ and
$ \kappa_{x_k}$. Thus $\nabla v_k(0)=0$. It follows from local estimates for solutions of linear elliptic equations that,
up to passing to a subsequence,
\[
v_k(y) \to U(y) \quad \mbox{in }C_{loc}^{2} (\R^n)  \quad \mbox{as }k\to \infty
\]  for some nonnegative function $U\in C^2(\R^n)$ satisfying
$$\Delta U(y)+n(n-2)U(y)^{\frac{n+2}{n-2}}=0\quad \mbox{in } \mathbb R^n, $$
\[
U(0)=1, \quad \nabla U(0)=0.
\]
By the classification theorem of  Caffarelli-Gidas-Spruck \cite{CGS}, we have
\be \label{eq:the-bubble}
U(y)=(1+|y|^2)^{\frac{2-n}2}.
\ee
Later on, we may write $U(|y|)=U(y)$ without causing confusion.

Recall that the Kelvin transform of a function $f$ defined on some measurable set $\om \subset \R^n$
with respect to the sphere $\pa B_\lda$, $\lda>0$, is defined as
\[
f^\lda(y):= (\frac{\lda}{|y|})^{n-2} f(y^\lda), \quad \text{for $y$ such that $y^\lda:= \frac{\lda^2 y}{|y|^2}\in \om$.}
\]

Applying the Kelvin transform to $U(y)$, we note that $U^{\lambda} (y)=U(y)$ for all $y$ when $\lambda=1$, and
\[
\left\{
\begin{aligned}
U^{\lambda} (y) &<U(y) \quad\text{for  $0<\lambda<1$ and $y$ with $|y|>\lambda$, and}\\
U^{\lambda} (y) &>U(y) \quad\text{for $\lambda>1$ and $y$ with $|y|>\lambda$.}\\
\end{aligned}
\right.
\]
 In fact we have a more precise estimate
\begin{align} \label{eq:model-fact}
U(y)-U^{\lambda}(y)
\begin{cases}
&>c_1(n)(1-\lda)(|y|-\lambda)|y|^{1-n},\quad \mbox{if } 0<\lambda<1\mbox{ and } |y|>\lambda, \\
& \equiv 0,\quad \mbox{if } \lambda=1,\\
&<-c_1(n)(\lda-1 )(|y|-\lambda )|y|^{1-n},\quad \mbox{if } \lambda>1\mbox{ and } |y|>\lambda,
\end{cases}
\end{align}
where $c_1(n)>0$ is a dimensional constant.

Using \eqref{eq:model-fact} and $v_k(y) \to U(y)=(1+|y|^2)^{\frac{2-n}2}$ in $C_{loc}^{2} (\R^n)$,
we see that for $\lambda>1$, $y$ with $|y|>\lambda$,
\be \label{eq:vkup}
v_k(y)- v^{\lambda}_k(y)<-c_1(n)(\lda-1 )(|y|-\lambda )|y|^{1-n} \text{  for sufficiently large $k$.}
\ee
For any fixed $\lda'>\lda>1$ and $R>\lda'$,
we can choose a common $k'$ such that the above holds
for any $k\ge k'$ and $\lda'\le |y| \le R$; but  for our purpose it suffices that for
each fixed $|y|>\lda>1$, there exists some $k'$ such that the above holds for $k\ge k'$.

On the other hand
 we shall apply the method of moving spheres in $\Sigma^k_{\lambda}\setminus \{S_k\}$
to $w_{\lambda}(y)=v_k(y)-v_k^{\lambda}(y)$ (for simplicity we omit the subscript $k$ in $w_{\lambda}$)
 to show that
  \be \label{eq:vkd}
  v_k(y)-v^{\lambda}_k(y)+f_{\lda, k}(y)\ge 0\quad
  \text{for $y\in \Sigma^k_{\lambda}\setminus \{S_k\}$,}
  \ee
   for $1-\delta_1 \le \lambda \le 1+\delta_1$, and
   for sufficiently large $k$, where
 $$\Sigma^k_{\lambda}:=B_{Q_1 l_k^{1/2}}\setminus \bar B_{\lambda}, $$
  $f_{\lda, k}(y)$ is an auxiliary function constructed to make
 $v_k(y)-v^{\lambda}_k(y)+f_{\lda, k}(y)$ a supersolution of a linear operator \eqref{fsuper}
 related to $L_{\bar g_k}$ and $f_{\lda, k}(y)\to 0$ uniformly over
 $y\in\Sigma^k_{\lambda}$ as $k\to \infty$,
and  for the $3\le n\le 6$ cases $\delta_1$ can be taken as $\frac 12$, and for the $n\ge 7$ cases
$\delta_1\in (0,1/2)$ is determined to make
 it possible to construct the auxiliary function $f_{\lda}^{(3)}(y)$ for $1-\delta_1\le \lda \le 1+\delta_1$ from \eqref{f3sol}.
 Furthermore, according to  Proposition \ref{prop:x-k-estimates} to be proved later, $Q_1>0$ can be taken such that
\[
S_k\in B_{Q_1 l_k^{1/2}}.
\]
The construction of $f_{\lda, k}(y)$ with the desired properties is carried out in
Lemmas \ref{lem:h-est-1}, \ref{lem:h-est-2}, and Proposition \ref{prop:final}
for $\frac 12 \le \lda \le 2$ in the $3\le n \le 6$ cases, and respectively, in
Lemma~\ref{lem:f4}, \eqref{eq:724-e},  and Proposition~\ref{cor:barrier-final} for
$1-\delta_1 \le \lambda \le 1+\delta_1$ in the
$n\ge 7$ cases; and the moving spheres argument for the $n\ge 7$
requires a minor modification to $\Sigma_{\lda}^k$, as done in Proposition \ref{cor:barrier-final}.

 The two opposing estimates \eqref{eq:vkup} and \eqref{eq:vkd} on $v_k(y)- v^{\lambda}_k(y)$
 are obtained  under the assumption \eqref{eq:contra-hy-1}.
 We now fix some $1<\lda\le 1+\delta_1$ and some $y$ with $|y|>\lda$:
 on the one hand, for sufficiently large $k$, we have \eqref{eq:vkup}; on the other hand,
 $f_{\lda, k}(y)\to 0$ implies that for   sufficiently large $k$,
 $c_1(n)(\lda-1 )(|y|-\lambda )|y|^{1-n}>f_{\lda, k}(y)$, which,
 together with $v_k(y)-v^{\lambda}_k(y)+f_{\lda, k}(y)\ge 0$,  leads to
 $v_k(y)-v^{\lambda}_k(y)+c_1(n)(\lda-1 )(|y|-\lambda )|y|^{1-n}>0$.
  This contradiction
 shows that the scenario of \eqref{eq:contra-hy-1} can't happen.

To apply the method of moving spheres, we first need to work out the equations satisfied by $w_{\lambda}$.
We first record the  expansions for the coefficients in $L_{g_k}$ based on Lemma \ref{lem:conf-normal},
\be \label{eq:g-kk}
\begin{split}
L_{g_k} &=\Delta_{g_k} -c(n)R_{g_k}(z)\\&
=\Delta + \frac{1}{\sqrt{\det g_k(z)}}\pa_j \left(g^{ij}_k(z) \sqrt{\det g_k(z)}\right) \pa_i  +(g^{ij}_k(z)-\delta_{ij})\pa_{ij} -c(n)R_{g_k}(z) \\&=:\Delta +b_i(z) \pa_i+d_{ij}(z) \pa_{ij} -c(z),
\end{split}
\ee
with
\[
b_i(z)=
\begin{cases} O(|x_k|^{\tau-2})|z|,&\quad |z|< \sigma |x_k|,\\
O(|z+x_k|^{\tau-1}),&\quad |z|\ge \sigma |x_k|,
\end{cases}
\]
\[
d_{ij}(z)=\begin{cases} O(|x_k |^{\tau-2})|z|^2,&\quad |z|< \sigma |x_k|,\\
O(|z+x_k|^{\tau}),&\quad |z|\ge \sigma |x_k |,
\end{cases}
\]
\[
c(z)=\begin{cases} O(|x_k |^{\tau-4})|z|^2,&\quad |z|< \sigma |x_k|,\\
O(|z+x_k |^{\tau-2}),&\quad |z|\ge \sigma |x_k |,
\end{cases}
\]
where $\sigma>0$ is independent of $k$, and we drop the subscript $k$ of $b_i, d_{ij}$ and $c$ for brevity.

After the scaling $y=l_k z$, by \eqref{eq:g-kk} and the definition of $\bar g_k$ after \eqref{v-k}, we have
\be\label{eq:bar-g}
L_{\bar g_k}= \Delta+\bar b_i(y)\partial_i+\bar d_{ij}(y)\partial_{ij}-\bar c(y)
\ee
where the differentiations are with respect to $y$ and
\begin{align}\label{bik}
\bar b_i(y)=&l_k^{-1}b_i(l_k^{-1}y)\nonumber \\[2mm]
=& \begin{cases}
O(l_k^{-2}|x_k|^{\tau-2}|y|),&\quad |y|<\sigma  |S_k|,\\[2mm]
O(l_k^{-\tau}|y-S_k|^{\tau-1}),&\quad |y|\ge \sigma  |S_k|,
\end{cases}
\end{align}
\begin{align}\label{dijk}
\bar d_{ij}(y)&=d_{ij}(l_k^{-1}y)\nonumber \\[2mm]& =
\begin{cases}
O(l_k^{-2}|x_k|^{\tau-2}|y|^2),\quad |y|<\sigma  |S_k|,\\[2mm]
O(l_k^{-\tau}|y-S_k|^{\tau}),\quad |y|\ge \sigma  |S_k|,
\end{cases}
\end{align}
and
\begin{align}\label{ck}
\bar c(y)&=c(n)l_k^{-2}R_{g_k}(l_k^{-1}y)\nonumber \\&= \begin{cases}
O(l_k^{-4}|x_k|^{\tau-4}|y|^2),\quad |y|<\sigma  |S_k|,\\[2mm]
O(l_k^{-\tau}|y-S_k|^{\tau-2}),\quad |y|\ge \sigma |S_k|.
\end{cases}
\end{align}

Unlike in the locally conformally flat case, here, $v_k^{\lambda}(y)$ no longer satisfies an equation
of the exact same form as $v_k(y)$ so we can't directly apply the moving spheres method
to $w_{\lambda}(y)$. But a direct computation,
using the equation for $v_k(y)$ at $y$ and $y^{\lda}$,
as well as $\Delta v^{\lda}_k(y)=\left(\frac{\lda}{|y|}\right)^{n+2} \Delta v_k(y^{\lda})$,  yields
\begin{equation}\label{eq-w}
\left[\Delta +\bar b_i(y)\partial_i +\bar d_{ij}(y)\partial_{ij}-\bar c(y)\right]w_{\lambda}(y)
+\xi_\lda(y)w_{\lambda}(y)=E_{\lambda}(y),\quad y\in \Sigma^k_{\lambda}\setminus \{S_k\},
\end{equation}
where
\be  \label{eq:xi-def}
\xi_\lda (y)=\left\{\begin{array}{ll}
\displaystyle{n(n-2)\frac{v_k(y)^{\frac{n+2}{n-2}}-v_k^{\lambda}(y)^{\frac{n+2}{n-2}}}{v_k(y)-v_k^{\lambda}(y)}}, \quad \mbox{if }\quad v_k(y)\neq v_k^{\lambda}(y),\\[2mm]
n(n+2)v_k^{\frac{4}{n-2}}(y),\quad \mbox{if}\quad v_k(y)=v_k^{\lambda}(y),
\end{array}
\right.
\ee
and
\begin{eqnarray} \label{Elda}
E_{\lambda}(y)&=& \left(\bar c(y)v_k^{\lambda}(y)- (\frac{\lambda}{|y|})^{n+2}
\bar c(y^{\lambda})v_k(y^{\lambda})\right)
-\left(\bar b_i(y)\partial_i v_k^{\lambda}(y)+\bar d_{ij}(y)\partial_{ij}v^{\lambda}_k(y)\right)
\nonumber\\
&& +(\frac{\lambda}{|y|})^{n+2}\left(\bar b_i(y^{\lambda})
\partial_i v_k(y^{\lambda})+
\bar d_{ij}(y^{\lambda})\partial_{ij}v_k(y^{\lambda})\right).
\label{eQ}
\end{eqnarray}

What is going to make the moving spheres method work to prove Theorem \ref{main-ub}
is that we are able to construct some $f_{\lda}(y)$ (we have dropped the index $k$ for $f_{\lda}(y)$) such that
\[
\left[\Delta +\bar b_i(y)\partial_i +\bar d_{ij}(y)\partial_{ij}-\bar c(y)\right]f_{\lda}(y)
+\xi_\lda(y) f_{\lda}(y) \le -|E_{\lambda}(y)| \quad \mbox{in }\Sigma^k_{\lda}
\]
with control, including   $f_{\lda}(y)=0$ for $y\in \partial B_{\lda}$ and
\be \label{fest}
|f_\lda(y)| +|\nabla f_\lda(y)|=o(1)|y|^{2-n}
\text{ uniformly for } y\in   \Sigma^k_\lda.
\ee
We remark that \eqref{fest} for the
 $3\le n \le 6$ cases follows from Lemma \ref{lem:h-est-1} and Lemma \ref{lem:h-est-2},
  and for the $n\ge 7$ cases follows from Proposition \ref{cor:barrier-final}.
In fact, we need to modify $\Sigma^k_\lda$ slightly into $\tilde \Sigma^k_\lda$ for the $n\ge 7$ cases---see
Proposition \ref{cor:barrier-final} for the construction of $f_\lda(y)$ for the $n\ge 7$ cases and the definition
of $\tilde \Sigma^k_\lda$, and Proposition \ref{prop:final} for the $3\le n \le 6$ cases.
Using $f_{\lda}(y)$, we have
\be \label{fsuper}
\left[ \Delta +\bar b_i(y)\partial_i +\bar d_{ij}(y)\partial_{ij}-\bar c(y)\right][w_{\lambda}(y)+ f_{\lda}(y)]+\xi_\lda [w_{\lambda}(y)+f_{\lda}(y)]\le 0 \quad \mbox{in }\Sigma^k_{\lda}\setminus \{S_k\}.
\ee

 Our construction allows us to show that the moving spheres process can be started:
\begin{equation}\label{v-k-s}
w_{\lambda}(y)+f_{\lambda}(y)>0 \quad \mbox{in}\quad  \Sigma^k_{\lambda}\setminus \{S_k\} \quad \mbox{for } \lambda\in [1-\delta_1, 1-\delta_1/2],
\end{equation}

Indeed, for any $\lambda \in [1-\delta_1,1-\delta_1/2]$ (the arithmetic below is worked out
assuming $1-\delta_1/2\le  3/5$ to get a clean constant in the estimate), by \eqref{eq:model-fact} we have
$$U(y)-U^{\lambda }(y)>\frac{2c_1(n)}{5}(|y|-\lambda )|y|^{1-n}\quad \mbox{for } |y|>\lambda; $$
in addition, $\frac{y}{|y|}\cdot \nabla \left( U(y)-U^{\lambda }(y) \right)\ge \frac{2c_1(n) \lda^{1-n}}{5}$ for $y$ with
$|y|=\lda$.
Since $v_k(y) \to U(y)$ in $C^2_{loc}(\R^n)$ as $k\to \infty$,  for any fixed $R>>1$ we have
\be \label{eq:ms-start-1}
v_k(y)-v_k^{\lambda}(y)>\frac{c_1(n)}{5}(|y|-\lambda)|y|^{1-n}, \quad \lambda<|y|<R,
\ee
provided $k$ is sufficiently large. We also have
\[
 v_k^{\lambda}(y)\le (1-3\epsilon_0)|y|^{2-n},\quad |y|\ge R,
\]
where $\va_0>0$ is some constant.  By Proposition \ref{v-lb-2} to be proved later,
\[
 v_k(y)\ge (1-\epsilon_0)|y|^{2-n},\quad |y|\ge R.
\]
Thus
\be
v_k(y)-v_k^{\lambda}(y)>\begin{cases} \frac{c_1(n)}{5}(|y|-\lambda)|y|^{1-n}, \quad \lambda<|y|<R, \\
               2 \epsilon_0 |y|^{2-n},\quad R\le |y|\le Q_1 l_k^{1/2}.
               \end{cases}
               \ee
               Our estimate \eqref{fest} implies that
               for all sufficiently large $k$ we have
                $v_k(y)-v_k^{\lambda}(y)+f_{\lambda}(y)>0$ on $\lambda<|y|<Q_1 l_k^{1/2}$.
  Therefore, \eqref{v-k-s} follows.

The critical position in the moving sphere method is defined by
$$\bar \lambda:=\sup\{\lambda \le 1+\delta_1:
   v_k(y)-v_k^{\mu}(y)+f_{\mu}(y)>0, \quad \forall ~y\in
 \Sigma^k_{\mu}\setminus \{S_k\} \mbox{ and }  1-\delta_1 <\mu<\lambda \}.$$
By \eqref{v-k-s}, $\bar \lda$ is well-defined. In order to reach to the final contradiction we claim that
$\bar \lda=1+\delta_1$.

If $\bar \lambda<1+\delta_1$,  by the definition \eqref{eq:Q-1} of $Q_1$ and $|f_{\lda}|=o(1)M_k^{-1}$
on $\partial B_{Q_1 l_k^{1/2}}$,  we still have
$v_k-v_k^{\bar \lambda}+f_{\bar \lambda}>0$ on $\partial B_{Q_1 l_k^{1/2}}$. By the maximum principle,
$v_k-v_k^{\bar \lambda}+f_{\bar \lambda}$ is strictly positive in $ \Sigma^k_{\bar \lambda}$ and
$\frac{\pa }{\pa r}(v_k-v_k^{\bar \lambda}+f_{\bar \lambda})>0$ on $\partial B_{\bar \lambda}$,
therefore  we can move spheres a little further than $\bar \lambda$  by a standard argument in
the moving spheres method---the presence of a potential singularity of $v_k$ at $S_k$ does not
create any issues in applying this method, as was done in \cite{ChenLin3, Li, f-mar,x-z-1}.
This contradicts the definition of $\bar \lambda$. Therefore, the claim is proved.

Sending $k$ to $\infty$ in the inequality
\[
v_k(y)-v_k^{\bar \lda }(y)+f_{\bar \lda}(y)\ge 0 \quad \mbox{for }1+\delta_1=\bar \lda <|y|<Q_1 l_k^{1/2},
\]
we have
\[
U(y) \ge U^{\bar \lda }(y) \quad \mbox{for all }~ |y| \ge \bar \lda=  1+\delta_1,
\]
which is a clear violation of \eqref{eq:model-fact}. This contradiction concludes the proof of Theorem \ref{main-ub}.

\section{Blow up analysis for local solutions to the Yamabe equation}
\label{sec:blow-up}

In this section, we summarize a few key facts in Khuri-Marques-Schoen \cite{KMS} needed for our
analysis of the behavior of $u(x)$ near $x=x_k$ (namely for $u_k(z)$ near $z=0$) for the
$n\ge 7$ cases.

Suppose that $g_k$ are smooth metrics defined in $B_1$ satisfying
\be \label{eq:normal-conf-smth-1}
\|g_k\|_{C^{n+2}(B_1)}\le C, \quad \det g_k=1 \quad \mbox{in }B_1,
\ee
where $n\ge 3$ is the dimension and $k=1,2,\dots$, and $B_1$ is a normal coordinates chart of
$ g_k(x)=\exp (h_{ij}(x)),$
where we dropped the subscript  $k$ of $h_{ij}$, and in addition,
\[
\sum_jh_{ij}(x)x^j=0 \quad \mbox{and}\quad \mathrm{trace}(h_{ij}(x))=0.
\]
Define, when $n\ge 6$,
\[
H_{ij}(x)= \sum_{2\le |\al|\le n-4} h_{ij \al} x^\al,
\]
\[
H_{ij}^{(l)}(x)= \sum_{ |\al|=l} h_{ij \al} x^\al, \quad |H_{ij}^{(l)}|^2= \sum_{ |\al|=l} |h_{ij \al}|^2,
\]
where $\al=(\al_1,\dots, \al_n)$, $\al_i\ge 0$ are integers, $|\al|=\al_1+\dots+\al_n$
and
$$
h_{ij \al}=\frac{\pa ^{\al} h_{ij}(0)}{\al !}=
\frac{ \pa_{x^1}^{\al_1} \pa_{x^2}^{\al_2} \dots  \pa_{x^n}^{\al_n }h_{ij}(0)}
{\al_{1}!\cdots \al_{n}!}. $$
Then $H_{ij}(x)=H_{ji}(x)$, $H_{ij}(x)x^j=0$ and $trac (H_{ij}(x))=0$. By the Taylor expansion,
\begin{equation}
\begin{split}
&\left|R_{g_k}-\partial_i \partial_j h_{ij}+\partial_l(H_{ij} \partial_l H_{il}) -\frac{1}{2}\partial _j H_{ij}\pa_l H_{il} +\frac14\pa_l H_{ij} \pa_l H_{ij} \right|\\
&\le C \sum_{|\al|=2}^d \sum_{i,j} |h_{ij \al}|^2|x|^{2|\al|} +C|x|^{n-2}
\end{split}
\end{equation}
and
\begin{equation} \label{eq:slc-expn-1}
\begin{split}
&\left|R_{g_k}-\partial_i \partial_j h_{ij} \right|\\&
\le C \sum_{|\al|=2}^d  \sum_{i,j} |h_{ij \al}|^2|x|^{2|\al|-2} +C|x|^{n-2},
\end{split}
\end{equation}
where $d=[\frac{n-2}{2}]$, see Proposition 4.3 of  Khuri-Marques-Schoen \cite{KMS} (see also Ambrosetti-Malchiodi \cite{AM},  Brendle \cite{Brendle}).  For $\va>0$, let
\[
\tilde H_{ij}(y)= H_{ij}(\va y).
\]
If $n\ge 8$, it was proved in  section 4 of \cite{KMS} that there exists a solution $\tilde Z_\va(y)$ of
\be \label{eq:corr-1}
\Delta \tilde Z_\va(y) +n(n+2) U^{\frac{4}{n-2}}(y)
\tilde Z_\va(y) =c(n ) \sum_{l=4}^{n-4}\sum_{i,j} \partial_i \partial_j \tilde H_{ij}^{(l)}(y) U(y),
\ee
satisfying that  $\tilde Z_\va(0)=0$, $\nabla \tilde Z_\va(0)=0$,
\[
\int_{\pa B_r} \tilde Z_\va(y)\,\ud S =\int_{\pa B_r}\tilde Z_\va(y) y^i \,\ud S= 0, \quad r>0, ~i=1,\dots, n
\]
and
\be \label{eq:corr-2}
|\nabla^ m \tilde Z_\va(y) |\le C \sum_{|\al|=4}^{n-4}
 \sum_{i,j} \va^{|\al|} |h_{ij \al}|(1+|y|)^{|\al|+2- n-m},
\ee
where $U(y)=(1+|y|^2)^{-\frac{n-2}{2}}$, $c(n)=\frac { (n-2) }{ 4(n-1) }$,
$C>0 $ is independent of $\va$ and $H_{ij}$, and $m=0,1,2$.
Note that if $Z_\va(x)= \va^{-\frac{n-2}{2}} \tilde Z_\va(\frac{x}{\va})$ and
$U_{\va}(x)= \va^{\frac{n-2}{2}} (\va^2+|x|^2)^{-\frac{n-2}{2}}$, we have
\be \label{eq:corr-3}
\Delta  Z_\va (x)+n(n+2) U_{\va}^{\frac{4}{n-2}}(x)
 Z_\va(x) =c(n ) \sum_{l=4}^{n-4}\sum_{i,j} \partial_i \partial_j H_{ij}^{(l)}(x) U_{\va}(x),
\ee
 and
 \be \label{eq:corr-4}
|\nabla^ m  Z_\va(x) |\le C  \va^{\frac{n-2}{2}}  \sum_{|\al|=4}^{n-4}
 \sum_{i,j} |h_{ij \al}|(\va+|x|)^{|\al|+2- n-m},
\ee
where $C>0 $ is independent of $\va$ and $H_{ij}$, and $m=0,1,2$.

Suppose that $\{u_k\}_{k=1}^\infty$ is a sequence of solutions of
\be \label{eq:local-smth}
-L_{g_k} u_k = n(n-2) u_k^{\frac{n+2}{n-2}} \quad \mbox{in }B_1, \quad u_k>0
\ee
with $g_k$ satisfying \eqref{eq:normal-conf-smth-1}.

We say $0$ is an isolated blow up point of $u_k$ if
$\lim_{k\to \infty}u_k(0)= \infty$, $0$ is a local maximum point of $u_k$, and
\[
u_k(x)\le A_1 |x|^{-\frac{n-2}{2}} \quad \mbox{in }B_{\rho_0},
\]
where $A_1, \rho_0$ are positive constants independent of $k$.

 We say $0$ is an isolated simple blow up point of $u_k$, if $0$ is an isolated blow up point and
\[
r^{\frac{n-2}{2}} \bar u_k(r) \mbox{ has exactly one critical point in }(0,\rho)
\]
for some constant  $\rho\in (0,\rho_0]$ independent of $k$, where
\[
\bar u_k(r)= \dashint_{\pa B_{r}} u_k\,\ud S.
\]

Define $\va_k=u_k(0)^{-\frac{2}{n-2}}$ and
\[
v_k(y)= \va_k^{\frac{n-2}{2}} u_k (\va_k y) \quad \mbox{for }y\in B_{1/\va_k}.
\]

In the following proposition, we take $h_{ij \al}=0$ and $ \tilde Z_{\va_k}=0$ when $n\le 5$.

\begin{prop}\label{prop:known} Let  $0$ be an isolated simple blow up point of $u_k$. Then,
for $|y|\le \rho \va_k^{-1}$,
\begin{align*}
\left|\nabla^m (v_k-U- \tilde Z_{\va_k})(y)\right| \le &C \sum_{|\al|=2}^{d-1}
\sum_{i,j} |h_{ij \al}|^2 \va_k^{2|\al|} |\ln \va_k|^{\theta_{|\al|}} (1+|y|)^{2|\al|+2-n-m}\\
&   + C \va_k^{n-3} (1+|y|)^{-1-m}, \quad \text{for $m=0,1,2$,}
\end{align*}
where $C>0$ depends only on the upper bound of $\|g_k\|_{C^{n+2}(B_1)}$, $A_1 $ and $ \rho_0$, $\theta_{|\al|}=1$ if $|\al|=\frac{n-2}{2}$ while $\theta_{|\al|}=0$ otherwise.

If $6\le n\le 24$, then
\be \label{eq:weyl-vanishing}
\sum_{|\al|=2}^d \sum_{i,j} |h_{ij \al}|^2 \va_k^{2|\al|} |\ln \va_k|^{\theta_{|\al|}} \le C \va_k^{n-2},
\ee
where $C>0$ depends only on the upper bound of $\|g_k\|_{C^{n+2}(B_1)}$, $A_1 $ and $ \rho_0$.
\end{prop}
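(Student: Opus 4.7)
The plan is to follow the scheme of Khuri--Marques--Schoen \cite{KMS}, adapted to our sequence. After rescaling $v_k(y)=\va_k^{(n-2)/2}u_k(\va_k y)$, the isolated simple blow up condition and standard Harnack arguments give the a priori bound $v_k(y)\le C(1+|y|)^{2-n}$ on $B_{\rho \va_k^{-1}}$ together with $v_k\to U$ in $C^2_{loc}(\R^n)$. The rescaled equation is $-L_{\bar g_k}v_k=n(n-2)v_k^{(n+2)/(n-2)}$ with $\bar g_k(y)=g_k(\va_k y)$, and under the conformal normal setup \eqref{eq:normal-conf-smth-1}, the perturbation $L_{\bar g_k}-\Delta$ is driven at leading order, via \eqref{eq:slc-expn-1}, by the rescaled quantities $\pa_i\pa_j\tilde H_{ij}^{(l)}(y)$ for $4\le l\le n-4$, together with quadratic terms in $H_{ij}$ for $2\le |\al|\le d-1$ and a genuinely remainder piece of size $|y|^{n-2}\va_k^{n-2}$.

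Set $w_k:=v_k-U-\tilde Z_{\va_k}$. Subtracting \eqref{eq:corr-1} from the equation for $v_k$ and Taylor expanding $t\mapsto t^{(n+2)/(n-2)}$ at $U$, one obtains
\begin{equation*}
\Delta w_k+n(n+2)U^{4/(n-2)}w_k=F_k(y)\quad \text{in } B_{\rho\va_k^{-1}},
\end{equation*}
where $F_k$ assembles three types of contributions: the quadratic and cross interactions from the pieces $H_{ij}^{(|\al|)}$ with $2\le |\al|\le d-1$ (each paired with its natural weight $\va_k^{2|\al|}(1+|y|)^{2|\al|+2-n}$ and a logarithmic factor when $|\al|=(n-2)/2$ because of resonance with the kernel modes), the self-coupling of $\tilde Z_{\va_k}$ with the metric perturbation controlled via \eqref{eq:corr-2}, and a remainder of order $\va_k^{n-3}(1+|y|)^{-1}$ coming from the sharp expansion \eqref{eq:slc-expn-1} together with Schauder estimates applied to first and second derivatives of $v_k$.

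The bound on $w_k$ is obtained by inverting the linearized operator $\mathcal L=\Delta+n(n+2)U^{4/(n-2)}$ in weighted $L^\infty$ spaces with weight $(1+|y|)^{-\beta}$. The kernel of $\mathcal L$ on $\R^n$ is the $(n+1)$-dimensional space spanned by the translations $\pa_iU$ and the dilation $\tfrac{n-2}{2}U+y\cdot \nabla U$. The normalizations $\tilde Z_{\va_k}(0)=0$, $\nabla\tilde Z_{\va_k}(0)=0$, and $\int_{\pa B_r}\tilde Z_{\va_k}=\int_{\pa B_r}\tilde Z_{\va_k}y^i=0$ ensure $\tilde Z_{\va_k}$ contributes nothing to the dangerous spherical harmonic modes of $F_k$. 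On the $w_k$ side, the kernel is neutralized by our geometric input: the blow up at a local maximum of $u_k$ (and thus of $v_k$) together with $\nabla \kappa_{x_k}(0)=0$ give vanishing translation projections, while the isolated simple condition, which says $r^{(n-2)/2}\bar v_k(r)$ has a unique critical point, controls the dilation projection. Inverting $\mathcal L$ against the weighted norm of $F_k$ and running a Schauder bootstrap on dyadic annuli delivers the claimed pointwise estimate for $m=0,1,2$, with the logarithmic factor appearing exactly at the resonant index $|\al|=(n-2)/2$.

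Finally, to obtain the vanishing inequality \eqref{eq:weyl-vanishing} for $6\le n\le 24$, I would apply the Pohozaev identity to $u_k$ on a small ball $B_\rho$ in the metric $g_k$. Inserting the expansion $v_k=U+\tilde Z_{\va_k}+w_k$ into the interior integral, odd-parity terms and projections onto the kernel drop out, and the surviving integrand is a positive definite quadratic form in the $h_{ij\al}$ whose value, after rescaling, is exactly $C_n\sum_{|\al|=2}^{d}\sum_{i,j}|h_{ij\al}|^2\va_k^{2|\al|}|\ln\va_k|^{\theta_{|\al|}}$ with a dimensional constant $C_n$ that is \emph{strictly positive precisely when} $n\le 24$. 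The boundary term is $O(\va_k^{n-2})$ by the blow up upper bound. Rearranging yields \eqref{eq:weyl-vanishing}. The main obstacle is this last step: the positivity of $C_n$ for $n\le 24$ is the nontrivial algebraic identity at the heart of \cite{KMS}, and it is precisely where the dimension $24$ enters (Brendle's counterexamples \cite{Brendle,BM} correspond to $C_n\le 0$ for $n\ge 25$). A secondary technical nuisance is the careful bookkeeping of resonant logarithmic factors and the fact that not all cross-terms between $\tilde Z_{\va_k}$ and $H_{ij}$ are automatically small: they have to be absorbed into the right hand side of the stated bound, using the exact form \eqref{eq:corr-4} of $\tilde Z_{\va_k}$.
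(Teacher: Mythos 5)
Your sketch follows the same route as the paper, which itself gives no proof of this proposition beyond citing \cite{KMS} (for $n\ge6$) and \cite{LZhu99, D, f-mar-1, LZ2} (for $n\le5$); your reconstruction captures the main lines of the KMS argument. Two small inaccuracies worth noting: (i) the Pohozaev interior integrand obtained after substituting $v_k=U+\tilde Z_{\va_k}+w_k$ is not literally the diagonal expression $C_n\sum_{\al,i,j}|h_{ij\al}|^2\va_k^{2|\al|}|\ln\va_k|^{\theta_{|\al|}}$ as you state; what Theorems A.4 and A.8 of \cite{KMS} establish is a positive lower bound for a genuinely non-diagonal quadratic form in the coefficients $(h_{ij\al})$, with $\tilde Z$ itself determined by those coefficients through \eqref{eq:corr-1}, and it is that lower bound (not the sign of a single dimensional constant) that degenerates for $n\ge 25$; (ii) the factor $\kappa_{x_k}$ you invoke when killing the translation kernel modes belongs to the later application in Section~\ref{sec:4}, not to the hypotheses of this proposition — here the translation projections are controlled simply because $0$ is a local maximum of $u_k$ (hence $\nabla v_k(0)=0$), which is already built into the isolated blow up condition.
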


\begin{proof} The first part can be found in \cite{LZhu99, D, f-mar-1, LZ2} for $n\le 5$, and in \cite{KMS} for $n\ge 6$. The second part can be found in the proof of Theorem 6.1 of  \cite{KMS}.

\end{proof}

The dimension restriction $n\le 24$  for \eqref{eq:weyl-vanishing} of
Proposition \ref{prop:known} is necessary to guarantee
 a positive lower bound of the Pohozaev quadratic form; see  Theorem  A.4 and Theorem A.8
 of \cite{KMS}.
We note that when \eqref{eq:weyl-vanishing} holds for $u_{k}$ and $g_{k}$, then
a corresponding	version holds for
the rescaled $v_{k}(y)=\tau_{k}^{\frac{n-2}{2}}u_{k}(\tau_{k}y)$ and
 $\hat g_{k}(y)=g_{k}(\tau_{k}y)$  for $\tau_{k}\to 0$ such that $v_{k}(0)\to \infty$,
 as in the set up for the proof
 of  Lemma 8.2 of  \cite{KMS}, with $\epsilon_{k}$ replaced by
 $v_{k}(0)^{-\frac{2}{n-2}}$,  thus Lemma 8.2 of  \cite{KMS} continues to hold
 without the dimension restriction $n\le 24$, as long as \eqref{eq:weyl-vanishing} holds.
 Therefore we have

\begin{prop}\label{prop:isol-to-isol-sim} Let $0$ be an isolated blow up point of $u_k$. Suppose that either $ n\le 24$ or \eqref{eq:weyl-vanishing} holds for some constant $C$ independent of $k$.
Then $0$ is an isolated simple blow up point of $u_k$, and
 \[
\left|\nabla^m (v_k-U- \tilde Z_{\va_k})(y)\right| \le C \va_k^{n-3} (1+|y|)^{-1-m}
\]
for every $|y|\le \rho \va_k^{-1}$ and $m=0,1,2$, where $C>0$ is independent of $k$.
The latter estimate is equivalent to
\be \label{eq:blow-up-expansion}
\left|\nabla^m (u_k-U_{\va_k}-  Z_{\va_k})(x)\right| \le  C \va_k^{\frac{n-2}{2}}(\va_k +|x|)^{-1-m}
\ee
for $|x|\le \rho$.

\end{prop}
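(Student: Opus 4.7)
My plan has two stages. First, upgrade the isolated blow up point $0$ to an isolated simple one. Second, sharpen the expansion of Proposition \ref{prop:known} by absorbing the coefficient sum using \eqref{eq:weyl-vanishing}. The equivalent estimate \eqref{eq:blow-up-expansion} then follows from the first conclusion by a direct change of variables $x = \va_k y$, $u_k(x) = \va_k^{-(n-2)/2} v_k(y)$, $U_{\va_k}(x) = \va_k^{-(n-2)/2} U(y)$, $Z_{\va_k}(x) = \va_k^{-(n-2)/2} \tilde Z_{\va_k}(y)$.

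For the first stage, I would follow the Pohozaev-identity / rescaling argument of Lemma 8.2 of \cite{KMS}. The argument is by contradiction: if $r \mapsto r^{(n-2)/2}\bar u_k(r)$ had a second critical point $r_k \in (0,\rho_0)$, then rescaling $u_k$ at $r_k$ would produce a new blow up sequence whose $C_{loc}^2$ limit inherits that secondary critical point, which is incompatible with the standard bubble $U(y) = (1+|y|^2)^{-(n-2)/2}$. The delicate piece is an integrated Pohozaev identity on a shell between the isolated-simple region and the secondary bubble, equating a controllable boundary flux (estimable via the first conclusion of Proposition \ref{prop:known}) to the Pohozaev quadratic form in $\{h_{ij\alpha}\}$. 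For $n \le 24$, Theorems A.4 and A.8 of \cite{KMS} give a positive lower bound for that form, producing \eqref{eq:weyl-vanishing} and hence the required contradiction. Under the alternative hypothesis where \eqref{eq:weyl-vanishing} is assumed outright, the remark preceding the proposition observes that the rescaling $\tau_k = v_k(0)^{-2/(n-2)}$ used inside Lemma 8.2 of \cite{KMS} preserves \eqref{eq:weyl-vanishing}, so the same contradiction argument runs without any dimensional restriction.

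For the second stage, once $0$ is known to be isolated simple, I apply the first part of Proposition \ref{prop:known} directly. For each multiindex $\alpha$ appearing in the sum, namely $2 \le |\alpha| \le d-1$, a short check shows $2|\alpha|+2-n \le -2$, so
\[
(1+|y|)^{2|\alpha|+2-n-m} \le (1+|y|)^{-1-m}
\]
throughout $|y| \le \rho \va_k^{-1}$. Factoring this decay rate out of the sum and invoking \eqref{eq:weyl-vanishing} bounds the remaining coefficient sum by $C\va_k^{n-2} \le C\va_k^{n-3}$, producing the claimed estimate on $v_k - U - \tilde Z_{\va_k}$. The main obstacle is the first stage: verifying that the Pohozaev shell argument of \cite{KMS} carries over verbatim to the rescaled sequence, in particular that the secondary rescaled problem still meets the smoothness and isolated-blow-up hypotheses needed to re-invoke Proposition \ref{prop:known}, and that the Pohozaev quadratic form retains its required positivity (or that \eqref{eq:weyl-vanishing} survives the rescaling with the correct powers of $\va_k$). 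The algebraic step in stage two is then entirely routine.
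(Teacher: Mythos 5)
Your proposal takes essentially the same route as the paper: both defer the isolated-simple upgrade to Lemma~8.2 of \cite{KMS} (via the Pohozaev sign restriction of Theorem~7.1 there) when $n\le 24$, note that assuming \eqref{eq:weyl-vanishing} outright replaces the dimensional restriction (and is preserved under the rescaling inside that argument) when $n\ge 25$, and then obtain the pointwise estimate by feeding \eqref{eq:weyl-vanishing} into the first part of Proposition~\ref{prop:known}, using $2|\alpha|+2-n\le -2$ for $|\alpha|\le d-1$ and $\va_k^{n-2}\le \va_k^{n-3}$; the change of variables for \eqref{eq:blow-up-expansion} is likewise the paper's. The paper's own proof is more terse (a citation plus one sentence) but covers exactly the same ground, so there is no substantive difference.
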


\begin{proof} It suffices to show that  $0$ is an isolated simple blow up point of $u_k$, since the estimates will follow from Proposition  \ref{prop:known} and \eqref{eq:weyl-vanishing}.
If $n\le 24$, this is what Lemma 8.2 of \cite{KMS} asserts.  If $n\ge 25$ and  \eqref{eq:weyl-vanishing} is assumed, as discussed above,
the conclusion of Theorem 7.1 of  \cite{KMS} (the local sign restriction of Pohozaev integral) still holds and  the same proof of Lemma 8.2 of \cite{KMS} applies.

Therefore, we complete the proof.
\end{proof}

The reason we are willing to assume \eqref{eq:weyl-vanishing} in Proposition~\ref{prop:isol-to-isol-sim}
for $n\ge 25$ is that, in the setting of
 \eqref{eq:contra-hy-1},  assumptions  \eqref{eq:inner-flat}
 and \eqref{eq:25-wk-bd-1}  imply that
 a relevant version of \eqref{eq:weyl-vanishing} holds for appropriately rescaled $u$;
 see \eqref{eq:weyl-vanishing-3}.

We formulate a version of
 \eqref{eq:weyl-vanishing} for the more general situation:
For any local maximum point of $u_{k}$
 in $\bar x\in B_{1/2}$ with $u_k(\bar x)\ge 1$, there exists
a conformal  normal coordinates system centered at $\bar x$ such that
\be  \label{eq:weyl-vanishing-2}
\sum_{|\al|=2}^{d} \sum_{i, j} |\pa^\al (g_k) _{ij}(0)|^2 \va_{\bar x, k}^{2|\al|}
 |\ln \va_{\bar x, k} |^{\theta_{|\al|}} \le  C \va_{\bar x, k}^{n-2}
\ee
for some constant $C$ independent of $k$,
where $\va_{\bar x, k}=u_k(\bar x)^{-\frac{2}{n-2}}$.

\begin{prop}\label{prop:KMS-8.2} Suppose that $0$ is a local maximum point of $u_k$
and $\lim_{k\to \infty}u_k(0)=\infty$. If $n\ge 25$, suppose further that for any
local maximum point of $u_{k}$  in $\bar x\in B_{1/2}$ with $u_k(\bar x)\ge 1$,
there exists  a conformal  normal coordinates system centered at $\bar x$
such that \eqref{eq:weyl-vanishing-2} holds.  Then
$0$ is an isolated simple blow up point of $u_k$ in some ball $B_{\rho}$ with $0<\rho<1$.
\end{prop}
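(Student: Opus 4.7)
The plan is to follow the scheme of Lemma 8.2 of \cite{KMS}, substituting the a priori dimension restriction $n\le 24$ with the quantitative vanishing condition \eqref{eq:weyl-vanishing-2} at every local maximum of $u_k$ in $B_{1/2}$. For $3\le n\le 24$ the argument of \cite{KMS} applies verbatim since \eqref{eq:weyl-vanishing} is automatic via Proposition \ref{prop:known}; the real work is the $n\ge 25$ case. The objective is to upgrade the single local maximum $0$ into an isolated simple blow up point, i.e.\ to prove both the pointwise bound $u_k(x)\le A_1|x|^{-(n-2)/2}$ on a fixed ball $B_\rho$ and the uniqueness of critical points of $r^{(n-2)/2}\bar u_k(r)$ in $(0,\rho)$.

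First I would run a Schoen--type selection procedure starting from $0$ to extract a maximal finite family of local maxima $\{x_k^{(1)}=0,\dots,x_k^{(N_k)}\}\subset B_{1/2}$ with $u_k(x_k^{(j)})\to\infty$, chosen so that the concentration balls of radii proportional to $\va_k^{(j)}:=u_k(x_k^{(j)})^{-2/(n-2)}$ are essentially disjoint, and so that outside their union $u_k$ is dominated by the nearest rescaled bubble. For large $k$ every selected point satisfies $u_k(x_k^{(j)})\ge 1$, so hypothesis \eqref{eq:weyl-vanishing-2} furnishes at each $x_k^{(j)}$ a conformal normal chart in which the pulled--back metric verifies \eqref{eq:weyl-vanishing} at scale $\va_k^{(j)}$.

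Next I would invoke Proposition \ref{prop:isol-to-isol-sim} in each of these conformal normal charts: the rescaled sequence $v_k^{(j)}(y)=(\va_k^{(j)})^{(n-2)/2}u_k(x_k^{(j)}+\va_k^{(j)}y)$ satisfies the Yamabe equation with a metric satisfying \eqref{eq:normal-conf-smth-1}, and the rescaled form of \eqref{eq:weyl-vanishing-2} is exactly the hypothesis \eqref{eq:weyl-vanishing} for that sequence. Hence each $x_k^{(j)}$ is an isolated simple blow up point of $u_k$ on a ball whose radius is proportional to half the distance to its nearest neighbor, and the sharp expansion \eqref{eq:blow-up-expansion} holds centered at $x_k^{(j)}$ on that ball.

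The hard part is to rule out $N_k\ge 2$, i.e.\ secondary bubbles clustering at $0$, and thereby obtain the global isolation bound. I would test the Pohozaev identity on geodesic spheres of radius $\rho_k$ lying between successive inter--bubble scales: the boundary terms are computed from \eqref{eq:blow-up-expansion} at each enclosed $x_k^{(j)}$, and the sign restriction of the local Pohozaev quadratic form (Theorem 7.1 of \cite{KMS}) remains available in the $n\ge 25$ range precisely because \eqref{eq:weyl-vanishing-2} holds, as noted by the authors in the paragraph preceding the proposition. The bubble--interaction contributions to the boundary integral carry a definite sign that is incompatible with $N_k\ge 2$, so $N_k=1$ and $0$ is the unique concentration point; this yields the isolated blow up bound. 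A final application of Proposition \ref{prop:isol-to-isol-sim} then promotes $0$ to an isolated simple blow up point in some fixed ball $B_\rho$, completing the proof.
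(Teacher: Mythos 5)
Your proposal is correct and follows essentially the same route as the paper: a local Schoen-type selection to get a bubble decomposition in $B_{1/2}$, a Pohozaev-type argument to obtain a positive lower bound on inter-bubble distances, and then an appeal to Proposition~\ref{prop:isol-to-isol-sim} (via \eqref{eq:weyl-vanishing-2} when $n\ge 25$) to conclude that $0$ is isolated simple. The paper outsources the two localization steps to citations (Han--Li for the decomposition, Niu--Peng--Xiong or Almaraz for the spacing bound) rather than sketching them as you do, but the structure of the argument is the same.
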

\begin{proof} If $n\le 24$,  the proof  amounts to localizing the arguments in  section 8 of \cite{KMS}
on compact manifolds. There are only two places which need some modification.
 First, by considering the function $(1/2- |x|)^{\frac{n-2}{2}} u_k(x)$ in $\bar B_{1/2}$,
  we can obtain a local bubbles decomposition in $B_{1/2}$ to replace
  Proposition 8.1 of \cite{KMS}. In fact, this was done by Lemma 1.1 of Han-Li \cite{HanLi}.
  Second, to prove a positive lower bound for the distance between centers of bubbles
  (Proposition 8.3 of \cite{KMS}),
  we can use a selecting process to find two almost closest
  two bubbles and scale them apart; see  Lemma 2.1 of Niu-Peng-Xiong \cite{NPX}
  or the proof of Proposition 8.2 of Almaraz \cite{Al}.   The rest of section 8 of \cite{KMS},
  in particular Lemma 8.2,
  can be applied identically and Proposition \ref{prop:KMS-8.2} follows.

  If $n\ge 25$, \eqref{eq:weyl-vanishing-2} implies \eqref{eq:weyl-vanishing}.   Given Proposition \ref{prop:isol-to-isol-sim},  the proof is similar as above.
\end{proof}

We next formulate and prove the properties of the conformal normal coordinates in our set up.

\begin{lem} \label{lem:conf-normal} Let $g$ be a smooth Riemannian metric defined in $B_1\setminus \{0\}$ and satisfy \eqref{eq:inner-flat}. Then there exist constants $0<\sigma<\bar \sigma<1/16<\Lambda$, depending only $n,g$ and $C_0$ in \eqref{eq:inner-flat}, such that for any  $\bar z\in B(0,\frac 1{10})\setminus \{0\}$ one can find a function $\kappa_{\bar z} \in C^\infty(B_1)$ satisfying
\be \label{eq:conformal-factor}
\frac 1{\Lambda}\le \kappa_{\bar z} \le \Lambda,\quad \kappa_{\bar z} (\bar z)=1,\quad |\nabla \kappa_{\bar z} (\bar z)|=0
\ee
such that the conformal metric $\varkappa=\kappa_{\bar z}^{-\frac{4}{n-2}}g$ has the following properties. There exists a smooth bijection $\phi_{\bar z}:B_{1/2}\to B_{1/2}+\{\bar z\}$ satisfying
\begin{itemize}
\item[(i)]  $\phi_{\bar z}(0)=\bar z$, $\nabla \phi_{\bar z}(0)$ is the identity matrix,
$\phi_{\bar z}(-\bar z)=0$,
\[
\Lda^{-1}\le |\nabla \phi_{\bar z}| \le \Lda\quad \mbox{and}\quad
|\nabla^m \phi_{\bar z}| \le \Lda|\bar z|^{-(m-1)} \quad \mbox{in } B_{1/2}\]
 for $m=2,\dots, n$;
\item[(ii)] In this coordinates  system $(B_{1/2}, \phi_{\bar z})$, write
$\varkappa(\phi_{\bar z}(x))=\varkappa_{ij}(x)\ud x^i \ud x^j$. We have
\[
\det \varkappa_{ij}(x)=1  \quad \mbox{for }|x|\le \sigma |\bar z|
\]
and for $m=0,1,2,\dots, n$
\be \label{eq:varkappa}
\sum_{i,j=1}^n | \nabla^{m} (\varkappa_{ij}(x)- \delta_{ij}) | \le \begin{cases}
\Lda |\bar z|^{\tau-m} (\frac{|x|}{|\bar z|})^{\max\{2-m,0\}} &\quad \mbox{if }|x|\le 2\sigma|\bar z|, \\
\Lda |x+\bar z|^{\tau-m} &\quad \mbox{if }|x|> 2\sigma|\bar z|.
\end{cases}
\ee

\end{itemize}

\end{lem}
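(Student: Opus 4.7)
The plan is to build $\phi_{\bar z}$ and $\kappa_{\bar z}$ by a two-scale construction: a genuine $g$-exponential chart at $\bar z$ on the inner ball $\{|x|\le\sigma|\bar z|\}$, the translation $x\mapsto x+\bar z$ on the outer region $\{|x|\ge 2\sigma|\bar z|\}$, and a smooth interpolation via cutoffs on the annulus, with $\kappa_{\bar z}$ chosen so that $\det\varkappa_{ij}\equiv 1$ on the inner ball and $\kappa_{\bar z}\equiv 1$ (hence $\varkappa=g$) on the outer region. Since $|-\bar z|=|\bar z|>2\sigma|\bar z|$ for $\sigma<1/2$, the requirement $\phi_{\bar z}(-\bar z)=0$ follows automatically from the outer translation piece.

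For the inner piece I work on $B_{\sigma|\bar z|}(\bar z)\subset B_1\setminus\{0\}$, where $g$ is smooth and \eqref{eq:inner-flat} gives $|\nabla^m(g_{ij}-\delta_{ij})|\le C|\bar z|^{\tau-m}$, hence Christoffel symbols of size $|\bar z|^{\tau-1}$ and Riemann curvature together with its derivatives of size $|\bar z|^{\tau-m}$. I take $\phi_{\bar z}^{\mathrm{in}}(x)=\exp^g_{\bar z}(x^i e_i)$ for a fixed orthonormal frame at $\bar z$, obtaining from the geodesic-equation ODE the expansion $\phi_{\bar z}^{\mathrm{in}}(x)=\bar z+x+O(|\bar z|^{\tau-1}|x|^2)$ and $|\nabla^m\phi_{\bar z}^{\mathrm{in}}|\le C|\bar z|^{\tau-m+1}\le\Lambda|\bar z|^{-(m-1)}$. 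In the resulting normal chart I define $\kappa_{\bar z}(\phi_{\bar z}^{\mathrm{in}}(x)):=(\det g_{ij}(x))^{(n-2)/(4n)}$; the identity $\det(\lambda^2 g)=\lambda^{2n}\det g$ with $\lambda^2=\kappa_{\bar z}^{-4/(n-2)}$ forces $\det\varkappa_{ij}\equiv 1$, while $\det g(0)=1$ and $\partial_k\det g(0)=0$ in a normal chart give $\kappa_{\bar z}(\bar z)=1$ and $\nabla\kappa_{\bar z}(\bar z)=0$. The inner estimate \eqref{eq:varkappa} then follows from Taylor expansion in normal coordinates: the degree-$k$ coefficient of $\varkappa_{ij}$ at $0$ is controlled by $|\nabla^{k-2}\mathrm{Rm}|\le C|\bar z|^{\tau-k}$, and summing (using $|x|/|\bar z|\le\sigma$, a convergent geometric factor) yields the bound $\Lambda|\bar z|^{\tau-m}(|x|/|\bar z|)^{\max(2-m,0)}$ at each derivative level $m$.

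For the global construction I fix $\chi\in C^\infty(\mathbb{R})$ with $\chi\equiv 1$ on $[0,\sigma]$ and $\chi\equiv 0$ on $[2\sigma,\infty)$ and define
\begin{equation*}
\phi_{\bar z}(x)=\chi(|x|/|\bar z|)\phi_{\bar z}^{\mathrm{in}}(x)+(1-\chi(|x|/|\bar z|))(x+\bar z),
\end{equation*}
extending $\kappa_{\bar z}$ by the analogous cutoff toward $1$ on $B_1\setminus B_{2\sigma|\bar z|}(\bar z)$. On the annulus $\sigma|\bar z|\le|x|\le 2\sigma|\bar z|$ the difference $\phi_{\bar z}^{\mathrm{in}}(x)-(x+\bar z)$ has $m$-th derivatives bounded by $C|\bar z|^{\tau-m+1}$, and combining with cutoff derivatives $\partial^k\chi(|x|/|\bar z|)=O(|\bar z|^{-k})$ preserves $|\nabla\phi_{\bar z}-I|=O(|\bar z|^\tau)$ and $|\nabla^m\phi_{\bar z}|\le\Lambda|\bar z|^{-(m-1)}$; by the inverse function theorem $\phi_{\bar z}$ is then a smooth diffeomorphism from $B_{1/2}$ onto $B_{1/2}+\{\bar z\}$. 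The outer estimate $|\nabla^m(\varkappa_{ij}-\delta_{ij})|\le\Lambda|x+\bar z|^{\tau-m}$ is immediate from $\varkappa(\phi_{\bar z}(x))=g(x+\bar z)$ on the outer region together with \eqref{eq:inner-flat}.

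The main obstacle is securing $\sigma$ and $\Lambda$ independently of $\bar z$, particularly ensuring that $\phi_{\bar z}^{\mathrm{in}}$ is a regular diffeomorphism on a full ball of radius $\sigma|\bar z|$ with estimates uniform in $\bar z$. The clean way is a rescaling: setting $\hat g(y):=g(\bar z+|\bar z|y)$ gives a metric on $B_\sigma$ satisfying $|\nabla^m(\hat g_{ij}-\delta_{ij})|\le C|\bar z|^\tau$ for $m=0,1,\dots,n+2$, so the $\hat g$-exponential map at $0$ is defined, non-degenerate, and $O(|\bar z|^\tau)$-close to the identity on $B_\sigma$ with quantitative bounds depending only on $n$ and $C_0$; scaling back transfers all estimates to $g$ with the correct powers of $|\bar z|$, and the same rescaling argument makes the inner Taylor expansion and the gluing step uniform in $\bar z$.
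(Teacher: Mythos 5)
Your construction differs from the paper's proof at the one point that carries the real content of this lemma. The paper rescales exactly as you do ($\hat g(y)=g(\bar z+|\bar z|y)$, so that all metric derivatives are of size $|\bar z|^{\tau}$ uniformly in $\bar z$), but then invokes G\"unther's theorem \cite{G}: the conformal factor $\kappa$ is chosen so that the chart consists of geodesic normal coordinates \emph{of the conformal metric} $h=\kappa^{-\frac{4}{n-2}}\hat g$ itself, in which $\det h\equiv 1$. You instead keep geodesic normal coordinates of $g$ at $\bar z$ and set $\kappa_{\bar z}=(\det g)^{\frac{n-2}{4n}}$ in that fixed chart. That does force $\det\varkappa_{ij}\equiv 1$ on the inner ball, and with your rescaling paragraph (and with the heuristic "summing of the Taylor series" replaced by a finite-order expansion with remainder, which the rescaled uniform bounds give) the displayed conclusions (i)--(ii), read literally, can be verified for your chart; the gluing to the translation outside $B_{2\sigma|\bar z|}$ and the extension of $\kappa_{\bar z}$ by a constant are the same as in the paper.

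The genuine gap is that your chart is not a system of conformal normal coordinates, and the lemma is used later precisely through that stronger property. A conformal change of metric changes geodesics, so $g$-normal coordinates at $\bar z$ are not normal coordinates for $\varkappa=\kappa_{\bar z}^{-4/(n-2)}g$; in your chart the Gauss-lemma identity fails, since $\varkappa_{ij}x^j=(\det g)^{-1/n}x_i\neq x_i$ in general. The paper needs both $\det\varkappa\equiv1$ and $\varkappa_{ij}x^j=x_i$ on the inner ball: this is exactly what makes $(\Delta_{g_k}-\Delta)V(|y|)=0$ for radial $V$ in the proof of Proposition \ref{e-e-lam} (equation \eqref{rrr}), and it is what allows writing the metric as $\exp(h_{ij})$ with $h_{ij}x^j=0$ and $\mathrm{tr}\,h=0$, the normalization required by the Khuri--Marques--Schoen framework used in Sections \ref{sec:blow-up} and \ref{sec:4}. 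With your determinant-only normalization, the term $(\bar b_i\partial_i+\bar d_{ij}\partial_{ij})U^\lambda$ no longer cancels; it carries no smallness factor $\sigma_k=\|v_k-U\|_{C^2}$ and is too large to be absorbed by the barrier construction of Lemma \ref{lem:h-est-1} (already for $n\ge 6$ the resulting contribution to $f_\lambda^{(1)}$ is not $o(1)M_k^{-1}$). So the missing idea is exactly the ingredient the paper imports from G\"unther: one must solve for $\kappa$ so that the determinant is normalized in the normal coordinates of the conformal metric, not of $g$; the naive choice $\kappa=(\det g)^{\frac{n-2}{4n}}$ proves the statement only in a weakened reading that would not serve its purpose in the rest of the paper.
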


\begin{proof} For each $\bar z\in B_{1/10}\setminus \{0\}$, we set $\bar r=|\bar z|$ and define
\[
g^{\bar r}(y):=g_{ij}^{\bar r}(y) \ud y^i \ud y^j \quad \mbox{for } |y|<1/2,
\]
where $g_{ij}^{\bar r}(y) =g_{ij}(\bar z+\bar ry) $.
By \eqref{eq:inner-flat}, we have, for $|y|<1/2$,
\be \label{eq:lem1-1}
| \nabla^l( g^{\bar r}_{ij}(y)-\delta_{ij})|\le C\bar r^{\tau}\quad \mbox{for } l=0,1,\dots, n+2.
\ee
Hence, there exists a constant $\delta_0>0$ independent of $\bar z$ such that
the exponential maps $\exp_0^{g^{\bar r}}(x)$ centered at $0$ of $(B_{1/2}, g^{\bar r}(y))$
 is well defined for $|x| \le \delta_0$, namely,
$x$, $|x|<\delta_0$, provides a geodesic normal coordinates
for $y=\exp_0^{g^{\bar r}}(x)$ in the metric $g^{\bar r}(y)$.

By G\"unther \cite{G}, there exist a positive function $\kappa (y)\in C^\infty(B_{1/4})$
and $\delta_1<\frac{\delta_0}{2}$ such that the metric
$h(y)=\kappa^{-\frac{4}{n-2}}(y) g^{\bar r}(y)$, when expressed in terms of $x$ via
$y=\exp_0^{h}(x)$, satisfies
\be \label{eq:conf-norm-1}
\det  (h_{ij} (\exp_0^{ h} (x)))= 1 \quad \mbox{for }|x|<2\delta_1.
\ee
 Moreover, $\kappa(\exp_0^{ h}(0))=1$,  $\nabla \kappa(\exp_0^{ h}(0))=0$,
\[
\Lda^{-1} \le \kappa(\exp_0^{ h}(x) ) \le \Lda, \quad |\nabla ^m \kappa(\exp_0^{ h}(x))|
\le \Lda \bar r^\tau \quad \mbox{for }|x|\le 4\delta_1, ~m=1,\dots, n,
\]
where $\Lda$ and $\delta_1$ depend only on $C_0$ and $n$. Since
\[
h_{ij}(y)= \sum_{k,l=1}^n \frac{\partial x_k}{\partial y_i}
 h(\frac{\partial}{\partial x_k}, \frac{\partial}{\partial x_l})
\frac{\partial x_l}{\partial y_j},
\]
and $h(\frac{\partial}{\partial x_k}, \frac{\partial}{\partial x_l})=\delta_{kl}+O(|x|^2)$ near $x=0$,
it follows from \eqref{eq:lem1-1} that
\[
\left| (\frac{\pa y}{\pa x})\Big|_{x=0} -I\right|\le \Lda \bar r^{\tau},
\] where $ (\frac{\pa y}{\pa x})\Big|_{x=0}$ is the Jacobi matrix at $x=0$ and
$I$ is the identity matrix. Hence, we can find $\bar \sigma>0 $ such that for $\bar r<\bar \sigma$,
there exists a  smooth bijection map $\phi$ from $\R^n\to \R^n$ extending
$y= \exp_{0}^{ h} (x)$ for $|x|<2\delta_1$ such that
\[
\phi (x)=x \quad \mbox{for }|x|>4\delta_1.
\]
Set $\sigma:=2\delta_1$ and
\be \label{eq:the-map}
\phi_{\bar z}(x):= \bar z+\bar r \phi(\frac{x}{\bar r}) \quad \mbox{for }x\in B_{1/2}.
\ee

We can extend and modify  $\kappa(y)= \kappa(\exp_0^{h}(x))$
for $2\delta_1 \le |x|\le 4\delta_1$ so that
\[
\kappa(\phi(x))= \kappa(0) \quad \mbox{for }|x|\ge 4\delta_1,\] and
\[
(C\Lda)^{-1} \le \kappa(\phi(x)) \le C \Lda, \quad |\nabla ^m \kappa(\phi(x))|
\le C \Lda  |\bar z|^\tau \quad \mbox{for }|x|\in \R^n, ~m=1,\dots, n.
\]
Set
\be \label{eq:kappa-z}
\kappa_{\bar z}(\phi_{\bar z}(x)):= \kappa(\phi(\frac{x}{|\bar z|})).
\ee
Then
\[
\varkappa_{ij}(x)=\kappa(\phi(\frac{x}{\bar r}))^{-\frac{4}{n-2}}
\sum_{k, l=1}^n \frac{\partial \phi_k}{\partial x_i}  \left(\frac{x}{\bar r}\right)
g_{kl}\left(\bar z+ \bar r \phi(\frac{x}{\bar r})\right) \frac{\partial \phi_l}{\partial x_j}
\left(\frac{x}{\bar r}\right),
\]
and it is easy to check that $\kappa_{\bar z}(z) $, $\varkappa_{ij}(x)$,
and $ \phi_{\bar z}(x)$ satisfy all the conclusions in the lemma. Therefore, we complete the proof.

\end{proof}

\section{Details in proving the upper bound in Theorem \ref{main-ub}}
\label{sec:4}

We now furnish details for the construction of $f_\lda$ satisfying \eqref{fest} and \eqref{fsuper}.
The construction has some differences between the $3\le n\le 6$ and $n\ge 7$ cases.
We first summarize lower bounds of $u_k$ and $v_k$
in Propositions \ref{lem:v-lb} and \ref{v-lb-2}, and then an upper bound of
$|x_k|$ in terms of $l_k^{1/2}$ in Proposition \ref{prop:x-k-estimates}.

By Lemma \ref{lem:conf-normal}, $g$ is at least H\"older continuous in $B_{1/2}$ and
$|R_{g}(z)| \le C |z|^{\tau-2}$.
 By Hardy inequality,  there exists a $\delta>0$ such that
\be \label{eq:positive-first-eigen}
\int_{B_{\delta }} \Big(|\nabla_{g} \phi|^2- c(n)|R_{g}|\varphi^2\Big)\,\ud vol_{g} \ge \int_{B_\delta} |\varphi|^2 \,\ud vol_{g}, \quad \forall~\varphi\in H_0^1(B_\delta).
\ee  Without loss of generality, we assume $\delta=1/2$.

\begin{prop}\label{lem:v-lb}
There exists $c_0>0$ independent of $k$ such that $u_k(z)>c_0$ in $B_{1/2}$.
\end{prop}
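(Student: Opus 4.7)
The plan is to reduce the claimed uniform lower bound on $u_k$ to a pointwise positive lower bound for the single underlying solution $u$ on a fixed enlarged ball. From the definition $u_k(z) = \kappa_{x_k}(\phi_{x_k}(z))\,u(\phi_{x_k}(z))$, the uniform bounds $\Lambda^{-1}\le \kappa_{x_k}\le \Lambda$ in Lemma \ref{lem:conf-normal}, and the fact that $\phi_{x_k}$ bijects $B_{1/2}$ with $B_{1/2}(x_k)\subset B_{3/5}$ (using $|x_k|<1/10$), we have for every $z\in B_{1/2}\setminus\{-x_k\}$ that
\[
u_k(z)\,\ge\,\Lambda^{-1}\inf_{B_{3/5}\setminus\{0\}} u.
\]
Thus it suffices to prove $c_0':=\inf_{B_{3/5}\setminus\{0\}} u>0$, a quantity depending only on the fixed solution $u$ and not on $k$.

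I would split according to whether $u$ is bounded in a punctured neighborhood of the singularity $0$. If $u$ is bounded there, the remark following \eqref{eq:inner-flat}---an application of the De Giorgi-Nash-Moser theorem, valid because $|R_g|\in L^p(B_1)$ for some $p>n/2$---extends $u$ to a nonnegative H\"older continuous function on $B_1$, and the nonlinearity $u^{(n+2)/(n-2)}$ then lies in $L^\infty_{loc}(B_1)$, so the equation $-L_g u=n(n-2)u^{(n+2)/(n-2)}\ge 0$ holds distributionally across $0$. The Hardy inequality \eqref{eq:positive-first-eigen} guarantees positivity of the first eigenvalue of $-L_g$ on $B_{1/2}$, so the strong maximum principle forces $u(0)>0$. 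Continuity and positivity on the compact $\bar B_{3/5}$ then yield $\inf_{\bar B_{3/5}} u>0$.

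If instead $u$ is unbounded near $0$, I would invoke the interior Harnack inequality on dyadic annuli $A_r=B_{2r}\setminus \bar B_r$. The scaling of the Yamabe equation together with the uniform smallness of $g-\delta$ on each $A_r$ afforded by \eqref{eq:inner-flat} supplies a Harnack constant independent of $r$, so that $\sup_{A_r} u$ and $\inf_{A_r} u$ are comparable. This forces $u(z)\to+\infty$ as $z\to 0$, hence $u$ is bounded below by a large constant on a small neighborhood of $0$. Combined with positivity and continuity of $u$ on the compact $\bar B_{3/5}\setminus B_\delta(0)$ for small $\delta$, this again delivers $\inf_{B_{3/5}\setminus\{0\}} u>0$.

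The main technical point, present only in the first case, is justifying that the continuous extension of $u$ is a supersolution across $0$ in a sense strong enough for the strong maximum principle to apply; this is the standard removable-singularity fact for nonnegative supersolutions of uniformly elliptic equations with locally bounded right-hand side, which applies here since $u$ is continuous and $u^{(n+2)/(n-2)}$ is locally bounded after the extension. Assembling the two cases furnishes the uniform positive lower bound claimed in the proposition.
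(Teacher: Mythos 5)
Your reduction to a positive lower bound for the single solution $u$ on a fixed ball is exactly the paper's first step, and your bounded case matches the removable-singularity remark in the introduction. The genuine gap is in your unbounded case. First, the claim that the Yamabe equation has a Harnack constant on the dyadic annuli $A_r$ that is independent of $r$ is not available at this stage: viewing the equation as $L_g u + V u = 0$ with $V = n(n-2)u^{4/(n-2)}$, the sup-bound half of Moser iteration (which you need to pass from "$u(z_j)$ large at a point'' to "$u$ large on the whole annulus'') requires a scale-invariant bound on $V$, i.e.\ precisely the upper bound $u \le C|x|^{-\frac{n-2}{2}}$ of Theorem \ref{main-ub} (this is how Lemma \ref{lem:sphere-harnack} is proved). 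But Proposition \ref{lem:v-lb} is an ingredient in the proof of Theorem \ref{main-ub} (it enters Proposition \ref{v-lb-2}, the choice of $Q_1$, and the start of the moving-sphere argument), so invoking that Harnack inequality here is circular. Second, even granting a uniform Harnack constant, unboundedness near $0$ only produces a sequence $z_j \to 0$ along whose annuli $u$ is large; to control $u$ on the intermediate regions $\{|z_{j+1}|<|x|<|z_j|\}$ you need a minimum principle there, and because of the zeroth-order term $-c(n)R_g$ in $L_g$ constants are not subsolutions, so "$u$ is large on the two bounding spheres'' does not directly yield a lower bound in between; fixing this forces you into exactly the kind of comparison-function argument you were trying to avoid.

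The paper's proof sidesteps both issues and needs no case distinction: since $u$ is a positive supersolution of $L_g$ on $B_{1/2}\setminus\{0\}$, one uses the coercivity \eqref{eq:positive-first-eigen} to solve $L_g v = 0$ in $B_{1/2}$ with $v = u$ on $\partial B_{1/2}$, gets $v>0$ in $\overline B_{1/2}$ by De Giorgi--Nash--Moser and the maximum principle, and then shows $u \ge v$ across the puncture by the same device as in the proof of B\^ocher's theorem (the point has zero capacity, so one compares $u - v$ with small multiples of a singular comparison function); this gives $u \ge \min_{\overline B_{1/2}} v > 0$ directly. If you want to keep your structure, your unbounded case should be replaced by this linear comparison argument (which in fact also covers your bounded case), rather than by a Harnack-chain argument that presupposes the very upper bound being proved.
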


\begin{proof}  Since the conformal factors $\kappa_{x_k}$ are uniformly controlled, it suffices to
prove that $u(x) \ge c>0$ on $B_{1/2}\setminus\{0\}$ for some $c>0$.
Since $u(x)$ is a positive solution of
 \eqref{eq:main-u1} in $B_{1/2}\setminus\{0\}$, it is a positive supersolution of $L_{g}$ there.
 The Hardy inequality for $g$ on $B_{\delta}$ (we have taken $\delta=1/2$)
 implies that there exists a classical solution $v(x)$
 on $B_{1/2}$ of $L_{g}v(x)=0$ with $v(x)=u(x)$ on
 $\partial B_{1/2}$, and that $v(x)>0$ in $B_{1/2}$---we have used
 the De Giorgi-Nash-Moser theory here
 as explained in the introduction. The maximum principle holds for $L_{g}$ on
 $B_{1/2}$, and just as in proving Bocher's theorem for harmonic functions, we conclude that $u(x)\ge v(x)$
 on $B_{1/2}\setminus\{0\}$, it then follows that $u(x)\ge \min_{\overline{B}_{1/2}} v>0$.
\end{proof}

\begin{prop}\label{v-lb-2}
For any given $\epsilon_0>0$, there exists $R>0$ such that for all sufficiently large $k$
\be \label{eq:lb-green}
v_k(y)\ge (1-\epsilon_0)|y|^{2-n}, \quad R<|y|<\frac{l_k}{2}.
\ee
\end{prop}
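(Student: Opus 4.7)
The plan is a comparison argument on the annular region $A_k:=\{R\le|y|\le l_k/2\}\setminus\{S_k\}$. The inner sphere $\partial B_R$ is handled by the $C^2_{\rm loc}$ convergence $v_k\to U$ established after \eqref{eq:the-bubble}, the outer sphere $\partial B_{l_k/2}$ by the uniform lower bound from Proposition~\ref{lem:v-lb}, and the interior by a barrier subsolution of the Yamabe equation together with a maximum principle for $L_{\bar g_k}$.

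On the inner boundary, since $U(y)|y|^{n-2}=(1+|y|^{-2})^{-(n-2)/2}\nearrow 1$ as $|y|\to\infty$, I would fix $R$ so that $U(y)|y|^{n-2}\ge 1-\epsilon_0/4$ for $|y|\ge R$; the convergence $v_k\to U$ in $C^2_{\rm loc}$ then yields $v_k(y)\ge(1-\epsilon_0/2)|y|^{2-n}$ on $\partial B_R$ for $k$ large. On the outer boundary, $v_k(y)=u_k(l_k^{-1}y)/M_k\ge c_0/M_k$ by Proposition~\ref{lem:v-lb}, while $(1-\epsilon_0)(l_k/2)^{2-n}=2^{n-2}(1-\epsilon_0) M_k^{-2}$ using $l_k^{n-2}=M_k^2$; since $M_k\to\infty$, we have a large slack $v_k\gg (1-\epsilon_0)(l_k/2)^{2-n}$ on $\partial B_{l_k/2}$ for $k$ large.

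For the interior propagation, I would construct a barrier of the form
\[
\tilde\phi_k(y)=(1-\epsilon_0)|y|^{2-n}-\delta_k|y|^{-\gamma}
\]
with a fixed $\gamma>n-2$ and a suitable $\delta_k\downarrow 0$, making $\tilde\phi_k$ a strict subsolution of the Yamabe equation, namely $-L_{\bar g_k}\tilde\phi_k\le n(n-2)\tilde\phi_k^{(n+2)/(n-2)}$ on $A_k$. Since $\Delta|y|^{2-n}=0$ and $\Delta|y|^{-\gamma}=\gamma(\gamma-n+2)|y|^{-\gamma-2}>0$, the role of the correction is to absorb the perturbation $(\bar b_i\partial_i+\bar d_{ij}\partial_{ij}-\bar c)\tilde\phi_k$, which is quantified by \eqref{bik}--\eqref{ck} together with $\tau=\frac{n-2}{2}$ and the bound $|x_k|\le C l_k^{-1/2}$ from Proposition~\ref{prop:x-k-estimates}. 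The comparison principle for the Yamabe equation—available thanks to the positivity of the first eigenvalue of $-L_{\bar g_k}$ on $B_{1/2}$ from Hardy \eqref{eq:positive-first-eigen}, which is preserved under the rescaling $y\mapsto l_k^{-1}y$—then gives $v_k\ge\tilde\phi_k$ on $A_k$. The potential singularity at $S_k$ is handled by excising an arbitrarily small ball: since $|S_k|\le Q_1 l_k^{1/2}$ implies $\tilde\phi_k(S_k)=O(|S_k|^{2-n})=O(M_k^{-1})$, whereas $v_k\ge c_0/M_k$ near $S_k$ (and $v_k\to\infty$ there in the non-removable case), we have $v_k>\tilde\phi_k$ on a small sphere surrounding $S_k$ for $k$ large. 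Letting $\delta_k\to 0$ and absorbing the $\delta_k|y|^{-\gamma}$ correction (which is $o_k(1)|y|^{2-n}$ for $|y|\ge R$) then yields the advertised bound.

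The main obstacle I anticipate is ensuring the subsolution inequality holds uniformly across the transition $|y|\sim\sigma|S_k|$ where the form of the coefficient bounds \eqref{bik}--\eqref{ck} switches, and in particular in the outer range $|y|\sim l_k/2$ where the nonlinear term $\tilde\phi_k^{(n+2)/(n-2)}\sim|y|^{-(n+2)}$ is smallest. In dimensions where the perturbation error decays weakest, the naive barrier may fail near the outer radius; one remedy is to trim $A_k$ to $\{R\le|y|\le\alpha l_k\}$ for some small $\alpha>0$, then patch with the crude outer-boundary bound from Step~2 on the remaining shell where the bound $(1-\epsilon_0)|y|^{2-n}=O(M_k^{-2})$ is trivially weaker than $c_0/M_k$. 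Alternatively, one could replace the pointwise barrier argument by a Green's-function representation of $v_k$ against $n(n-2)v_k^{(n+2)/(n-2)}$ and extract the sharp asymptotic coefficient $\int_{\R^n} U^{(n+2)/(n-2)}=\omega_{n-1}/n$ matching $|y|^{2-n}$.
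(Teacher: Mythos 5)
Your skeleton—inner boundary by $v_k\to U$, outer boundary by Proposition~\ref{lem:v-lb}, and a comparison principle in between—matches the paper's strategy, but the interior mechanism you propose has a genuine gap, and the paper's version of it is both simpler and correct.

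The gap is the comparison principle you invoke for the nonlinear equation. Suppose $\tilde\phi_k$ is a subsolution of the Yamabe equation. Setting $w=v_k-\tilde\phi_k$ and subtracting gives (by the mean value theorem)
\[
-\Delta_{\bar g_k} w + \bigl(\bar c(y) - n(n+2)\,\xi(y)^{\frac{4}{n-2}}\bigr)\, w\ \ge\ 0
\]
for some $\xi$ between $v_k$ and $\tilde\phi_k$. The new zero-order term $-n(n+2)\xi^{4/(n-2)}$ is the linearization of the critical nonlinearity and is of the wrong sign for a maximum principle; near $|y|=R$ it is of size $\sim R^{-4}$, whereas the coercivity \eqref{eq:positive-first-eigen} only controls $-\Delta_{\bar g_k}+\bar c$ (and, after rescaling to $B_{l_k/2}$, gives a first eigenvalue of order $l_k^{-2}\to 0$). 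So ``positivity of the first eigenvalue of $-L_{\bar g_k}$'' does not justify the comparison you need. One can repair this by producing a positive strict supersolution of the linearized operator on the annulus $\{|y|>R\}$ for $R$ large (say $|y|^{-\alpha}$ with $0<\alpha<n-2$), but that is an extra argument you haven't supplied, and it adds yet another constraint to the already delicate matching of $\delta_k,\gamma$ across the transition $|y|\sim\sigma|S_k|$.

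The paper sidesteps all of this by comparing at the \emph{linear} level: since $-L_{g_k}u_k=n(n-2)u_k^{\frac{n+2}{n-2}}\ge 0$, $u_k$ is a supersolution of $L_{g_k}$, so it suffices to exhibit a linear subsolution with the right asymptotics. They take the Green's function $G_k$ of $-L_{g_k}$ on $B_{1/2}\setminus\{0\}$ normalized by $\lim_{z\to 0}|z|^{n-2}G_k(z)=1$ with $G_k=0$ on $\partial B_{1/2}$; the standard expansion $G_k(z)=|z|^{2-n}+O(|z|^{2-n+\gamma})$ (uniform in $k$) gives the sharp rate, the $C^2_{\rm loc}$ convergence gives $u_k\ge(1-\tfrac{\epsilon_0}{4})M_k^{-1}G_k$ on $\partial B_{Rl_k^{-1}}$, and the maximum principle for the genuinely linear $L_{g_k}$ on $B_{1/2}\setminus B_{Rl_k^{-1}}$—which \emph{does} follow from \eqref{eq:positive-first-eigen}—closes the argument. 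The remaining range $|y|\ge l_k^{3/4}$ is handled directly by Proposition~\ref{lem:v-lb}, since there $(1-\epsilon_0)|y|^{2-n}\le l_k^{3(2-n)/4}=M_k^{-3/2}\ll c_0 M_k^{-1}$. So the ``trimming and patching'' you anticipate is real, but it is the $l_k^{3/4}$ split and not a rescue of a failing barrier. Your final one-sentence alternative (Green's function) is the right instinct; the key realization you are missing is that $v_k$ is already a supersolution of the \emph{linear} conformal Laplacian, so no nonlinear barrier or nonlinear comparison is needed.

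One more minor point: the claim that $\tilde\phi_k(S_k)=O(|S_k|^{2-n})=O(M_k^{-1})$ is not justified—$|S_k|\le Q_1 l_k^{1/2}$ only gives $|S_k|^{2-n}\ge Q_1^{2-n}M_k^{-1}$, and there is no lower bound on $|S_k|$ of order $l_k^{1/2}$. This does not ultimately matter because the correct way to treat the potential singularity at $S_k$ is that the supersolution property of $u_k$ persists across it in a generalized sense (with $u_k$ lower semicontinuous, possibly $+\infty$ there), so no separate boundary condition on a small sphere around $S_k$ is needed.
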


\begin{proof} By Proposition \ref{lem:v-lb}, we have  $v_k(y)\ge c_0/M_k$.
Hence,  \eqref{eq:lb-green} holds if  $|y|\ge l_k^{3/4}=M_k^{\frac{3}{2(n-2)}}$.

Next, we consider  $|y|<l_k^{3/4}$.
Since $v_k \to U$ in $C^2_{loc}(\R^n)$ as $k\to \infty$,
for any $\epsilon_0>0$ small and $R$ large,  we have
 \begin{equation}\label{on-R}
v_k(y)\ge (1-\frac{\epsilon_0}{8})(1+|y|)^{2-n}, \quad |y|\le R,
\end{equation}
when $k$ is large.
Let $G_k\in C^2(B_{1/2}\setminus \{0\})$ be a nonnegative solution of
\[
-L_{g_k}G_k =0 \quad \mbox{in }B_{1/2}\setminus \{0\}, \quad G_k=0 \quad \mbox{on }\pa B_{1/2}
\]
and
\[
\lim_{z\to 0} |z|^{n-2} G_k(z)= 1.
\]
Making use of the standard local estimates of linear elliptic equations, we have
\be \label{eq:geenfunct-est-1}
G_k(z)= |z|^{2-n}+ a^k(z),
\ee
where
\[
|a^k(z)| \le C|z|^{2-n+\gamma}
\]
for some constants $C>0$ and $0<\gamma \le 1$ independent of $k$. By \eqref{on-R}
and \eqref{eq:geenfunct-est-1}, we have $u_k\ge (1-\frac{\epsilon_0}{4})  M_k^{-1} G_k$
on $\pa B_{R l_k^{-1}}$ when $k$ is large.  Applying the maximum principle to
$u_k-(1-\frac{\epsilon_0}{4}) M_k^{-1}G_k$, we obtain
\[
u_k(z) -(1-\frac{\epsilon_0}{4} ) M_k^{-1}G_k(z) \ge 0 \quad
\mbox{in }B_{1/2}\setminus B_{R l_k^{-1}},
\]
where we have used $-L_{g_k}$ is coercive in $H_0^1(B_{1/2})$, i.e.,
\eqref{eq:positive-first-eigen}. It follows that
\[
v_k(y) \ge (1-\frac{\epsilon_0}{4} ) |y|^{2-n} \Big(1-C (l_k^{-1}|y|)^\gamma \Big) \quad
\mbox{for }R<|y|<\frac12 l_k.
\]
Hence, if  further $|y|<l_k^{3/4}$,  we have
\begin{equation}\label{v-low-b-2}
v_k(y)\ge (1-\frac{\epsilon_0}{4} ) (1-C l_{k}^{-\frac{\gamma}{4}}) |y|^{2-n}\ge
(1-\epsilon_0)|y|^{2-n},
\end{equation}
provided $k$ is sufficiently large. This completes our proof.
\end{proof}

\begin{prop}\label{prop:x-k-estimates} Under the assumptions of Theorem \ref{main-ub} and Lemma \ref{lem:blow-up},
we have
\[
|x_k|\le \bar C l_k^{-1/2}
\]
for some $\bar C>0$ independent of $k$.
\end{prop}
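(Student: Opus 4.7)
I would split the argument by dimension, because the high-dimensional case is almost immediate from the standing hypothesis while the low-dimensional case requires the full machinery of Section~\ref{sec:blow-up}.

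\textbf{Case $n\ge 25$.} The hypothesis \eqref{eq:25-wk-bd-1} supplies $|x|^{n-2}u(x)\le C$ near $0$. Evaluating at $x=x_k$ gives $M_k = u(x_k)\le C|x_k|^{-(n-2)}$, and since $l_k = M_k^{2/(n-2)}$, this rearranges at once to
\[
|x_k|\le C^{1/(n-2)}\,M_k^{-1/(n-2)} = C^{1/(n-2)}\,l_k^{-1/2},
\]
which is the claim with $\bar C = C^{1/(n-2)}$.

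\textbf{Case $3\le n\le 24$.} I would argue by contradiction: suppose along a subsequence $|x_k|l_k^{1/2}\to\infty$. Set $\delta_k := l_k^{-1/2}$ and rescale by
\[
\hat u_k(\hat z):=\delta_k^{(n-2)/2}u_k(\delta_k\hat z),\qquad \hat g_k(\hat z):=g_k(\delta_k\hat z).
\]
Then $\hat z=0$ is a local maximum of $\hat u_k$ with $\hat u_k(0)=l_k^{(n-2)/4}\to\infty$, and the singular point of $u_k$ at $z=-x_k$ corresponds in $\hat z$-coordinates to $-x_k/\delta_k$, which escapes to infinity by the contradiction hypothesis. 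Hence, for any fixed $\hat\rho>0$ and all sufficiently large $k$, $\hat u_k$ is a smooth positive solution of $-L_{\hat g_k}\hat u_k = n(n-2)\hat u_k^{(n+2)/(n-2)}$ on $B_{\hat\rho}$; by Lemma~\ref{lem:conf-normal}, $\hat g_k$ is in conformal-normal form with uniformly bounded $C^{n+2}$ norm on $B_{\hat\rho}$. Since $n\le 24$, Proposition~\ref{prop:KMS-8.2} applies with no additional hypothesis and yields that $\hat z=0$ is an isolated simple blow-up point of $\hat u_k$ in $B_{\hat\rho}$. Proposition~\ref{prop:isol-to-isol-sim} then produces the sharp expansion, which (after translating $\hat z = y/l_k^{1/2}$ and using that our $v_k$ coincides with the bubble-scale rescaling of $\hat u_k$) becomes
\[
\bigl|v_k(y)-U(y)-\tilde Z_{\delta_k}(y)\bigr|\le C\delta_k^{n-3}(1+|y|)^{-1}\quad\text{for }|y|\le \hat\rho\, l_k^{1/2}.
\]

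With this in hand, I would close the contradiction through a Pohozaev identity for $v_k$ on $B_R$ with $R=\hat\rho l_k^{1/2}/2$. Under the contradiction hypothesis, $|S_k|\gg \hat\rho l_k^{1/2}$, so the singular point of $v_k$ lies strictly outside $B_R$ and $v_k$ is smooth on $B_R$. On the one hand, the sharp expansion identifies the boundary Pohozaev integrand on $\partial B_R$, up to rescaling, with that of the standard bubble $U$, whose Pohozaev identity on $\mathbb{R}^n$ vanishes. On the other hand, since $u$ has a non-removable singularity at $0$ (forced by \eqref{eq:contra-hy-1}), $v_k$ retains a genuine singularity at $S_k$; combining the lower bound $v_k(y)\ge (1-\epsilon_0)|y|^{2-n}$ from Proposition~\ref{v-lb-2} with the metric error estimates \eqref{bik}--\eqref{ck} and the decay of $\tilde Z_{\delta_k}$ from \eqref{eq:corr-2}, the bulk side of the Pohozaev identity cannot vanish, producing the contradiction.

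\textbf{Main obstacle.} The principal technical difficulty is the Pohozaev comparison in Case $3\le n\le 24$: one must simultaneously track the corrector $\tilde Z_{\delta_k}$ and the curvature perturbations $\bar b_i,\bar d_{ij},\bar c$ from \eqref{bik}--\eqref{ck}, and pin down how the mass generated by the singularity at $S_k$ — although $S_k$ lies outside the expansion region — is transmitted into a nonzero Pohozaev contribution on $B_R$. Identifying the leading-order term with the correct sign to contradict the vanishing boundary side is the delicate bookkeeping at the heart of the proof.
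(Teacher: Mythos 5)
Your $n\ge 25$ case is correct and is exactly the paper's argument: the hypothesis \eqref{eq:25-wk-bd-1} evaluated at $x_k$ gives $M_k\le C|x_k|^{-(n-2)}$, which rearranges directly to the claim.

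Your $n\le 24$ case, however, has a genuine gap, and it stems from your choice of rescaling length. You rescale by $\delta_k=l_k^{-1/2}$, which, under the contradiction hypothesis $|x_k|l_k^{1/2}\to\infty$, pushes the singularity off to infinity so that $\hat u_k$ is a smooth blow-up sequence on arbitrarily large balls. But then the standard consequence of isolated simple blow up, namely $\hat u_k(0)\,\hat u_k(\hat z)\le C$ for $|\hat z|=\rho$, reduces (after unwinding the scaling) to $u_k(\delta_k\hat z)\le C$, which is perfectly compatible with the lower bound $u_k\ge c_0$ of Proposition~\ref{lem:v-lb}: no contradiction appears. That is why you are forced to reach for a Pohozaev comparison on $B_R$, $R\sim l_k^{1/2}$. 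Yet with $S_k\notin B_R$, the bulk integral over $B_R$ picks up only the metric perturbation terms and the corrector $\tilde Z_{\delta_k}$, all of which decay, while the boundary Pohozaev of the bubble on $\partial B_R$ also tends to $0$; there is no identified source of a definite-signed, non-vanishing discrepancy. You acknowledge this bookkeeping is ``the heart of the proof,'' but you do not carry it out, and it is not clear it closes.

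The paper's proof is instead direct (no contradiction hypothesis) and hinges on rescaling by $|x_k|$ rather than by $l_k^{-1/2}$: set $\tilde u_k(y)=|x_k|^{(n-2)/2}u_k(|x_k|y)$. This places the singularity at unit distance from the blow-up point. By \eqref{eq:contra-hy-1}, $\tilde u_k(0)=|x_k|^{(n-2)/2}M_k\to\infty$, so Proposition~\ref{prop:KMS-8.2} makes $0$ an isolated simple blow-up point in some $B_\rho$, and Proposition~\ref{prop:isol-to-isol-sim} gives the Harnack-type bound $\tilde u_k(0)\,\tilde u_k(y)\le C$ for $|y|=\rho$. Now the crucial point your rescaling lost: on $|y|=\rho$ one has $\tilde u_k(0)\tilde u_k(y)=|x_k|^{n-2}M_k\,u_k(|x_k|y)\ge c_0|x_k|^{n-2}M_k$ by Proposition~\ref{lem:v-lb}, so $|x_k|^{n-2}M_k\le C/c_0$, i.e., $|x_k|\le\bar C l_k^{-1/2}$. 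The extra factor $|x_k|^{n-2}$ appears precisely because the rescaling length was chosen to be $|x_k|$, and it is that factor that the Harnack bound caps — this is the mechanism your argument is missing.
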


\begin{proof}  If $n\ge 25$, the proposition follows immediately from the assumption
\eqref{eq:25-wk-bd-1}. Now, we assume $n\le 24$, and define
$\tilde u_k(y)= |x_k|^{\frac{n-2}{2}} u_k(|x_k| y)$ and
$\tilde g_k(y)= g_{k}(|x_k| y)$.  Then $\tilde u_k(y)$ is a smooth solution of
\[
-L_{\tilde g_k} \tilde u_k= n(n-2) \tilde u_k^{\frac{n+2}{n-2}} \quad \mbox{in }B_1,
\]
and $0$ is a local maximum point of $\tilde u_k$,
$\tilde u_k(0)=|x_k|^{\frac{n-2}{2}}M_k\to \infty$ as $k\to \infty$,
and
\[
\det \tilde g_k =1\quad \mbox{in }B_\sigma, \quad \|\tilde g_k\|_{C^{n+2}(B_1)}\le C
\]
for some $C>0$ independent of $k$.  By Proposition \ref{prop:KMS-8.2},
$0$ must be an isolated simple blow up point of $\tilde u_k$ for some $0<\rho<\sigma$.
 As a consequence of the last estimate of Proposition~\ref{prop:isol-to-isol-sim},
$\tilde u_k(0) \tilde u_k(y) \le C$ for $|y|=\rho$
for some $C>0$ depending on $\rho$ and all sufficiently large $k$.
However, for $|y|=\rho$,
\[
\tilde u_k(0) \tilde u_k(y) =|x_k|^{\frac{n-2}{2}}M_k |x_k|^{\frac{n-2}{2}} u_k(|x_k| y)
=|x_k|^{n-2} M_k u_k(|x_k| y)\ge c_0 |x_k|^{n-2} M_k
\]
where $c_0>0$ is the constant in Proposition \ref{lem:v-lb}. So we obtain an upper bound for
$|x_k| \le \bar C l_k^{\frac 12}$.

\end{proof}

Let $Q_1>0$ such that for all $\lda\le 2$
\be \label{eq:Q-1}
2U^{\lda}(y)<\frac{c_0}2 M_k^{-1} \quad \mbox{for }|y|\ge \frac{Q_1 l_k^{1/2}}{4},
\ee
where $c_0>0$ is the constant in Proposition \ref{lem:v-lb}. This choice of $Q_1$
can guarantee that $v_k^\lda<v_k$ near the boundary of $B_{Q_1 l_k^{1/2}}$
due to \eqref{eq:lb-green}.

\begin{rem}\label{rem:quantities}
We now summarize the relations among  $|x_k|, l_k^{1/2}$ and $|S_k|$ for future references:
\begin{itemize}
\item[(i).] $|x_k|\to 0$, $M_k\to \infty$, $|S_k|= l_k |x_k|\to \infty$ as $k\to \infty$,
\item[(ii).] $|x_k|\le \bar C  l_k^{-1/2}$, $|S_k|\le \bar Cl_k^{1/2} < Q_1 l_k^{1/2}$,
\item[(iii).] $l_k^{\tau}=M_k$, $l_k^{-2}|x_k|^{\tau-2}=l_k^{-\tau } |S_k|^{\tau -2}
= M_k^{-1} |S_k|^{\frac{n-6}{2}} $ and
$l_k^{-4}|x_k|^{\tau-4}= l_k^{-\tau } |S_k|^{\tau -4}=M_k^{-1} |S_k|^{\frac{n-10}{2}}  $,
using $\tau=\frac{n-2}{2}$.
\end{itemize}
\end{rem}

\subsection{Case of $3\le n\le 6$. }

Here we first provide an upper bound for $E_{\lda}$ as defined in \eqref{Elda}
in terms of some powers of $|y|$ and $|y-S_{k}|$,
then construct $f_{\lda}(y)$ with respect to these power functions in Lemmas \ref{lem:h-est-1}
and \ref{lem:h-est-2}.

Let $\chi_k\in C_c^\infty (B_{|S_k|/2}(S_k))$ be a cutoff function satisfying $0\le \chi_k\le 1$
and $\chi_k=1$ in $B_{|S_k|/4}(S_k)$. Let
\be
\sigma_k:=\|v_k-U\|_{C^2(B_2)}.
\ee
We have  $\sigma_k \to 0$ as $k\to \infty$. Note also that $\frac 12 \le \tau\le 2$ here.

\begin{prop}\label{e-e-lam}  Suppose $n\le 6$. For $\lda \in [1/2,2]$ and
$\lda\le |y|\le Q_1l_k^{1/2}$,  we have
\begin{equation}
E_\lda (y)\le C_1\Big(E_\lda^{(1)}(|y|) +E_\lda^{(2)}(y)\Big),
 \end{equation}
where  $C_1>0$ is independent of $k$,
\[
E_\lda^{(1)}(|y|) =\begin{cases}
  l_k^{-\tau} |S_k|^{\tau-4}|y|^{4-n} +\sigma_k l_k^{-\tau}|S_k|^{\tau-2}|y|^{-n},& \quad
  |y|< \sigma |S_k|,\\[2mm]  l_k^{-\tau}  |y|^{\tau-n} ,&\quad |y|\ge \sigma |S_k|,
 \end{cases}
\]
and
\[
E_\lda^{(2)}(y) = l_k^{-\tau}|S_k|^{2-n}|y-S_k|^{\tau-2}\chi_k(y).
 \]
\end{prop}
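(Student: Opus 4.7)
The plan is to decompose $E_\lda(y)$ into the six terms appearing in \eqref{Elda} and estimate them separately in two regimes: the inner regime $\lda\le |y|<\sigma|S_k|$ (so that $z=y/l_k$ lies in the conformal normal chart of Lemma~\ref{lem:conf-normal}) and the outer regime $\sigma|S_k|\le |y|\le Q_1 l_k^{1/2}$ (where $y$ approaches the puncture $S_k$). Throughout I will use that $|y^\lda|=\lda^2/|y|\le\lda\le 2$, so that $v_k(y^\lda),\nabla v_k(y^\lda),\nabla^2 v_k(y^\lda)$ sit on a fixed compact set and can be compared to $U,\nabla U,\nabla^2 U$ with an $O(\sigma_k)$ error coming from $\|v_k-U\|_{C^2(B_2)}=\sigma_k$.

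In the inner regime I split $v_k^\lda(y)=U^\lda(y)+\phi_k(y)$ with $\phi_k(y)=(\lda/|y|)^{n-2}(v_k-U)(y^\lda)$ and handle the $U^\lda$- and $\phi_k$-contributions of each direct term separately. The $U^\lda$ part is radial, and the key input is that our coordinates are conformal normal coordinates, so the Gauss lemma $\varkappa_{ij}(x)x^j=x^i$ from Lemma~\ref{lem:conf-normal} gives $\bar d_{ij}(y)y^j=0$, and $\det \bar g_k\equiv 1$ forces $\mathrm{tr}(h)=O(h^2)$ for $h=\bar g_k^{-1}-I$, hence $\mathrm{tr}(\bar d)(y)=O(l_k^{-2\tau}|S_k|^{2\tau-4}|y|^4)$. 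Differentiating $g^{ij}z^j=z^i$ likewise yields $y\cdot\bar b(y)=-\mathrm{tr}(\bar d)(y)$. These two identities collapse the tensorial contractions acting on a radial function to
\[
\bar d_{ij}(y)\partial_{ij}U^\lda(y)=\frac{(U^\lda)'(|y|)}{|y|}\,\mathrm{tr}(\bar d)(y),\qquad \bar b_i(y)\partial_iU^\lda(y)=\frac{y\cdot\bar b(y)}{|y|}\,(U^\lda)'(|y|),
\]
both of which are $o(1)\cdot l_k^{-\tau}|S_k|^{\tau-4}|y|^{4-n}$, and only $\bar c(y)U^\lda(y)=O(l_k^{-\tau}|S_k|^{\tau-4}|y|^{4-n})$ survives, producing the first summand of $E_\lda^{(1)}$. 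For $\phi_k$ I will exploit that the origin is a critical point of $v_k$ with $v_k(0)=U(0)=1$ and $\nabla v_k(0)=\nabla U(0)=0$, so a second-order Taylor expansion with remainder using $\|v_k-U\|_{C^2(B_2)}=\sigma_k$ gives $|(v_k-U)(y^\lda)|\le C\sigma_k|y^\lda|^2$, $|\nabla(v_k-U)(y^\lda)|\le C\sigma_k|y^\lda|$, and $|\nabla^2(v_k-U)(y^\lda)|\le C\sigma_k$. Pulling these back through the Kelvin scaling yields $|\phi_k(y)|\le C\sigma_k|y|^{-n}$, $|\nabla\phi_k(y)|\le C\sigma_k|y|^{-n-1}$, $|\nabla^2\phi_k(y)|\le C\sigma_k|y|^{-n-2}$, and multiplying by the coefficient bounds \eqref{bik}--\eqref{ck} produces exactly the $\sigma_k l_k^{-\tau}|S_k|^{\tau-2}|y|^{-n}$ summand of $E_\lda^{(1)}$.

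In the outer regime I use \eqref{bik}--\eqref{ck} in the form involving $|y-S_k|$, combined with $|\nabla^m v_k^\lda(y)|=O(|y|^{2-n-m})$ from the Kelvin formula and the bound $|\nabla^m v_k|\le C$ on compacts. On $\{|y-S_k|\ge|S_k|/4\}$ one has $|y-S_k|\sim|y|$, so each direct term is $O(l_k^{-\tau}|y|^{\tau-n})$, matching the outer piece of $E_\lda^{(1)}$. On $\mathrm{supp}\,\chi_k$ one has $|y|\sim|S_k|$, the dominant term $\bar c(y)v_k^\lda(y)=O(l_k^{-\tau}|y-S_k|^{\tau-2}|S_k|^{2-n})$ produces $E_\lda^{(2)}$, and the $\bar b,\bar d$ direct terms pick up extra factors $|y-S_k|/|S_k|$ and $(|y-S_k|/|S_k|)^2$ and are absorbed. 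The three reflected terms are negligible in both regimes: since $|y^\lda|\le 2$ the coefficients $\bar b(y^\lda),\bar d(y^\lda),\bar c(y^\lda)$ obey the inner bounds while $v_k(y^\lda),\nabla v_k(y^\lda),\nabla^2 v_k(y^\lda)$ are uniformly bounded, and the prefactor $(\lda/|y|)^{n+2}$ makes these contributions easily absorbable into $E_\lda^{(1)}$.

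The main obstacle is the inner regime: the naive bounds $|\bar b_i\partial_iv_k^\lda|,|\bar d_{ij}\partial_{ij}v_k^\lda|=O(l_k^{-\tau}|S_k|^{\tau-2}|y|^{2-n})$ are larger than either summand of $E_\lda^{(1)}$, so the whole argument rests on simultaneously extracting the Gauss-lemma cancellation on the radial model part $U^\lda$ and the higher-order vanishing of $v_k-U$ at the critical origin for the perturbation $\phi_k$. Dropping either refinement loses the factor $|y|^{-2}$ or $\sigma_k$ that is needed to fit inside $E_\lda^{(1)}$.
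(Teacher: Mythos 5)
Your treatment of the direct terms in the inner regime is correct and essentially equivalent to the paper's: the paper states the exact identity \eqref{rrr} (that $\Delta_{g_k}-\Delta$ annihilates radial functions in conformal normal coordinates with unit determinant), while you derive the same smallness from the Gauss lemma $\bar d_{ij}(y)y^j=0$ together with the $\det=1$ condition, which forces $\mathrm{tr}(\bar d)=O(\|\bar d\|^2)$; either route gives the same conclusion. The outer regime and the $\bar c$ terms are also handled correctly.

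The gap is in your treatment of the reflected $\bar b$ and $\bar d$ terms in the inner regime $\lambda\le|y|<\sigma|S_k|$. You claim that because $|y^\lambda|\le 2$, the coefficients obey the inner bounds, $v_k(y^\lambda),\nabla v_k(y^\lambda),\nabla^2 v_k(y^\lambda)$ are uniformly bounded, and the prefactor $(\lambda/|y|)^{n+2}$ absorbs everything. That fails when $|y|$ is comparable to $\lambda$, because then the prefactor is $O(1)$. Concretely, the naive bound
\[
\Bigl(\tfrac{\lambda}{|y|}\Bigr)^{n+2}\,\bigl|\bar b_j(y^\lambda)\partial_j v_k(y^\lambda)\bigr|
\;\le\; C\Bigl(\tfrac{\lambda}{|y|}\Bigr)^{n+2} l_k^{-\tau}|S_k|^{\tau-2}|y^\lambda|\cdot O(1)
\;=\;O\bigl(l_k^{-\tau}|S_k|^{\tau-2}|y|^{-n-3}\bigr)
\]
must be compared to $E_\lambda^{(1)}$, whose two pieces at such $|y|\sim 1$ are $O(l_k^{-\tau}|S_k|^{\tau-4})$ and $O(\sigma_k l_k^{-\tau}|S_k|^{\tau-2})$. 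The ratios are $\sim|S_k|^2$ and $\sim 1/\sigma_k$ respectively, both unbounded as $k\to\infty$; the $\bar d$ reflected term is no better. Even sharpening $\nabla v_k(y^\lambda)$ to $O(|y^\lambda|)$ using $\nabla v_k(0)=0$ does not rescue this. The fix, which the paper carries out explicitly, is to apply the very same cancellation you used for the direct terms: since $U$ is radial, $\bar b_j(y^\lambda)\partial_j U(y^\lambda)+\bar d_{ij}(y^\lambda)\partial_{ij}U(y^\lambda)=0$, so the reflected terms equal $\bar b_j(y^\lambda)\partial_j(v_k-U)(y^\lambda)+\bar d_{ij}(y^\lambda)\partial_{ij}(v_k-U)(y^\lambda)$, and the quadratic vanishing $|\nabla^m(v_k-U)(y^\lambda)|\le C\sigma_k|y^\lambda|^{2-m}$ then produces the needed $\sigma_k$ factor and the correct power $\sigma_k l_k^{-\tau}|S_k|^{\tau-2}|y|^{-n}$. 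Without this step the reflected terms dominate $E_\lambda^{(1)}+E_\lambda^{(2)}$ near $|y|=\lambda$ and the proposition does not follow.
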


\begin{proof}  If $|y|<\sigma |S_k|$, the proof is identical to that of Proposition 2.3 of \cite{x-z-1};
see also Proposition 2.1 of \cite{LZ2}. We include the proof here for reader's convenience.
We start from the second term of $E_{\lambda}$:
$$I:=(\bar b_j(y)\partial_j v_k^{\lambda}(y)+\bar d_{ij}(y)\partial_{ij}v^{\lambda}_k(y)).$$
Since $y$ is conformal normal for $g_k(y)$ in $|y|\le \sigma |S_k|$, we have
\begin{equation}
0=(\Delta_{g_k}-\Delta ) V(y) = (\bar b_j(y)\partial _j+\bar d_{ij}(y)\partial_{ij})V(y)
\label{rrr}
\end{equation}
for any smooth radial function $V(y)=V(|y|)$ and $|y|\le \sigma |S_k|$. It follows that,
for $|y|\le \sigma |S_k|$,
$$
I=(\bar b_j(y)\partial_j+\bar d_{ij}(y)\partial_{ij})
[ (v_k-U)^\lambda(y)].
$$
By a direct computation,
$$
\partial_j
\bigg\{  (\frac \lambda{|y|})^{n-2}  (v_k-U)(y^\lambda)\bigg\}
=
\partial_j
\bigg\{  (\frac \lambda{|y|})^{n-2} \bigg\}  (v_k-U)(y^\lambda)
+
  (\frac \lambda{|y|})^{n-2}
\partial_j \bigg\{   (v_k-U)(y^\lambda)
\bigg\},
$$
\begin{align*}
&\partial_{ij}\bigg\{  (\frac \lambda{|y|})^{n-2}  (v_k-U)(y^\lambda)\bigg\}
\\&=\partial_{ij}\bigg\{  (\frac \lambda{|y|})^{n-2} \bigg\}  (v_k-U)(y^\lambda)+
\partial_i \bigg\{  (\frac \lambda{|y|})^{n-2} \bigg\}
\partial_j  \bigg\{    (v_k-U)(y^\lambda)  \bigg\}\\ & \quad
+ \partial_j
\bigg\{  (\frac \lambda{|y|})^{n-2} \bigg\}
\partial_i
 \bigg\{    (v_k-U)(y^\lambda)  \bigg\}
+
(\frac \lambda{|y|})^{n-2}
\partial_{ij}
 \bigg\{    (v_k-U)(y^\lambda)  \bigg\}.
\end{align*}
Since  $\bar d_{ij}\equiv \bar d_{ji}$, using \eqref{rrr} with $V(y)=(\frac \lambda{|y|})^{n-2}$, we have
\begin{align*}
I=&
(\frac \lambda{|y|})^{n-2}
\bar b_j (y) \partial_j
 \bigg\{    (v_k-U)(y^\lambda)  \bigg\}
+2\bar d_{ij} (y) \partial_i
\bigg\{  (\frac \lambda{|y|})^{n-2} \bigg\}
\partial_j
 \bigg\{    (v_k-U)(y^\lambda)  \bigg\}
\\&+(\frac \lambda{|y|})^{n-2}
\bar d_{ij} (y) \partial_{ij}
 \bigg\{    (v_k-U)(y^\lambda)  \bigg\},
\end{align*}
for $|y|\le \sigma |S_k|$. To evaluate terms in $I$, we observe that for $z\in B_2$,
\begin{equation} \label{13-1}
\begin{split}
(v_k-U)(z)&=O(\sigma_k)|z|^2, \\
|\nabla_z (v_k-U)(z)|&=O(\sigma_k)|z|,  \\ |\nabla_z^2(v_k-U)(z)|&=O(\sigma_k),
\end{split}
\end{equation}
where we have used  $(v_k-U)(0)=|\nabla (v_k-U)(0)|=0$. Here we recall that $\sigma_k=\|v_k-U\|_{C^2(B_1)}\to 0$.
It follows from the first components of (\ref{bik}), (\ref{dijk}) and (\ref{ck}) that
\begin{equation} \label{eq:error-1}
\begin{split}
I&= O(|x_k|^{\tau-2}) \sigma_k   l_k^{-2} \Big(|y|^{2-n}  |y| |y^\lambda | |\nabla_y y^\lambda| +|y|^2|y|^{1-n} |y^{\lambda}|  |\nabla_y y^\lambda| \\& \quad +|y|^{2-n} |y|^2  (|y^\lambda | |\nabla_y^2 y^\lambda| +|\nabla_y y^\lambda| ^2 ) \Big) \\& = \sigma_k
  l_k^{-2} O(|x_k|^{\tau-2})|y|^{-n}=\sigma_k  l_k^{-\tau} O(|S_k|^{\tau-2})|y|^{-n}, \quad |y|<\sigma |S_k|.
\end{split}
\end{equation}
Similarly,
\begin{align*}
&(\frac{\lambda}{|y|})^{n+2}\left(\bar b_j(y^{\lambda})
\partial_j v_k(y^{\lambda})+
\bar d_{ij}(y^{\lambda})\partial_{ij}v_k(y^{\lambda})\right)\\
=& \sigma_k
 l_k^{-2} |y|^{-n} O(|x_k|^{\tau-2}) |y|^{-n}= \sigma_k
 l_k^{-\tau } O(|S_k|^{\tau-2}) |y|^{-n},\quad |y|<\sigma |S_k|
\end{align*}
and
\begin{align*}
& |\bar c(y)||v_k^\lambda(y)-U^\lambda(y)|
+(\frac \lambda{|y|})^{n+2}
|\bar c(y^\lambda)||v_k(y^\lambda)-U(y^\lambda)| \\
=& \sigma_k  l_k^{-4} O(|x_k|^{\tau-4}) |y|^{-n}=\sigma_k  l_k^{-\tau} O(|S_k|^{\tau-4}) |y|^{-n}
\quad \mbox{for}\quad |y|<\sigma |S_k|.
\end{align*}
Finally, using the estimates of $\bar c$  we have
\[
\bar c(y) U^\lda (y)- (\frac \lambda{|y|})^{n+2} \bar c(y^\lambda) U(y^\lambda)= l_k^{-4}  O(|x_k|^{\tau-4})
 |y|^{4-n}=l_k^{-\tau} O(|S_k|^{\tau-4}) |y|^{4-n}.
\]
This finishes the proof for the  case $|y|\le \sigma |S_k|$.

 If $|y|\ge \sigma |S_k|$, by the estimates in (\ref{bik}), (\ref{dijk}) and (\ref{ck}), we have
\begin{align*}
E_{\lambda}(y)&\le C l_k^{-\tau}|y-S_k|^{\tau-2}|y|^{2-n}+Cl_k^{-2}|x_k|^{\tau-2}|y|^{-2-n}\\&
=  C l_k^{-\tau}(|y-S_k|^{\tau-2}|y|^{2-n} +|S_k|^{\tau-2}|y|^{-2-n})\\&
\le C l_k^{-\tau}(|y-S_k|^{\tau-2}|S_k|^{2-n} \chi_k(y) +|y|^{\tau-n} ).
\end{align*}
This finishes our proof.
\end{proof}

We next construct supersolutions of the linearized operator \eqref{eq-w}. Since $E_\lda^{(1)}(|y|)$ is radial, we
 let $f_{\lda}^{(1)}(r)$  be the radial solution of
\be \label{eq:barrier-ode}
\begin{split}
\Delta  f_{\lda}^{(1)} =\frac{\ud^2}{\ud r^2 }f_{\lda}^{(1)} &+\frac{n-1}{r} \frac{\ud}{\ud r } f_{\lda}^{(1)}
 = -N E_\lda^{(1)}(r), \quad r\in (\lda, Q_1l_k^{1/2}), \\[2mm]
f_{\lda}^{(1)}(\lda)&=\frac{\ud}{\ud r }   f_{\lda}^{(1)}(\lda)=0,
\end{split}
\ee
where $N>2C_1+2$ is a constant.

\begin{lem}\label{lem:h-est-1} Suppose that $3\le n\le 6$. Let $Q_1$ be a constant defined in \eqref{eq:Q-1} and $\lambda\in [1/2,2]$. Then for  $\lambda<|y|<Q_1l_k^{1/2}$, there hold
\be  \label{eq:lem37a}
f_{\lda}^{(1)}(|y|)<0,\quad f_{\lda}^{(1)}(|y|)=o(1)M_k^{-1}
\ee and
\be \label{eq:lem37b}
\left[ \Delta +\bar b_i(y)\partial_i+\bar d_{ij}(y)\partial_{ij}-\bar c(y)\right] f_{\lda}^{(1)}(y)+(N-1)E_\lda^{(1)}(y)  \le E_\lda^{(2)}(y).
\ee
\end{lem}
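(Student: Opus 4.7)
My plan is to analyze $f_\lda^{(1)}$ directly from its explicit radial representation
\[
f_\lda^{(1)}(r) = -N\int_\lda^r t^{1-n}\int_\lda^t s^{n-1}E_\lda^{(1)}(s)\,\ud s\,\ud t,
\]
which immediately forces $f_\lda^{(1)}(r)\le 0$ and $(f_\lda^{(1)})'(r)\le 0$ for $r>\lda$. Substituting the two-piece formula for $E_\lda^{(1)}$ and using $l_k^{-\tau}|S_k|^{\tau-4} = M_k^{-1}|S_k|^{(n-10)/2}$, $l_k^{-\tau}|S_k|^{\tau-2}=M_k^{-1}|S_k|^{(n-6)/2}$, $M_k=l_k^\tau$ and $|S_k|\le \bar C l_k^{1/2}$ from Remark~\ref{rem:quantities} and Proposition~\ref{prop:x-k-estimates}, a direct calculation yields
\[
|f_\lda^{(1)}(r)|\le C\bigl(M_k^{-1}|S_k|^{\tau-4}r^{6-n}+\sigma_k M_k^{-1}|S_k|^{\tau-2}\bigr)
\]
for $\lda<r<\sigma|S_k|$ (with the obvious logarithmic replacement at the critical exponent $n=6$), and $|f_\lda^{(1)}(r)|\le C M_k^{-1}r^{-\tau}$ on $\sigma|S_k|\le r\le Q_1l_k^{1/2}$. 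Both bounds attain their maxima near $r\sim|S_k|$, where they are of order $M_k^{-1}|S_k|^{-\tau}=o(M_k^{-1})$, which gives \eqref{eq:lem37a}.

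For the pointwise inequality \eqref{eq:lem37b} I use $\Delta f_\lda^{(1)}=-NE_\lda^{(1)}$ to rewrite its left-hand side as
\[
-E_\lda^{(1)}(y)+\bigl[\bar b_i\pa_i f_\lda^{(1)}+\bar d_{ij}\pa_{ij}f_\lda^{(1)}-\bar c\,f_\lda^{(1)}\bigr](y).
\]
In the inner region $|y|<\sigma|S_k|$ the conformal-normal identity \eqref{rrr} kills the $\bar b$ and $\bar d$ contributions because $f_\lda^{(1)}$ is radial, leaving only $-\bar c\,f_\lda^{(1)}$. The inner bound on $\bar c$ from \eqref{ck} combined with the size estimate on $f_\lda^{(1)}$ above shows that the pointwise ratio $|\bar c\, f_\lda^{(1)}|/E_\lda^{(1)}$ is dominated by a multiple of $M_k^{-1}|S_k|^\tau\le \bar C M_k^{-1/2}\to 0$. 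Since $\chi_k\equiv 0$ on $|y|<\sigma|S_k|$, the inequality reduces to $-\tfrac12 E_\lda^{(1)}\le 0=E_\lda^{(2)}$ for all large $k$.

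In the outer region $\sigma|S_k|\le |y|\le Q_1l_k^{1/2}$ the radial trick is no longer available, so I estimate all three error contributions directly. Differentiating the representation yields $|\nabla f_\lda^{(1)}|\le CM_k^{-1}r^{-n/2}$ and $|\nabla^2 f_\lda^{(1)}|\le CM_k^{-1}r^{-(n+2)/2}$, and combining with the outer cases of \eqref{bik}--\eqref{ck} each of the three error terms is bounded by $CM_k^{-1}l_k^{-\tau}|y-S_k|^{\tau-2}|y|^{1-n/2}$. On $\mathrm{supp}\,\chi_k$, where $|y|\sim|S_k|$, this compares with $E_\lda^{(2)}$ with ratio $M_k^{-1}|S_k|^{n/2-1}\le \bar C M_k^{-1/2}\to 0$; outside $\mathrm{supp}\,\chi_k$ one has $|y-S_k|\ge |S_k|/2$, and comparison with $E_\lda^{(1)}=l_k^{-\tau}|y|^{\tau-n}$ (using $\tau-2\le 0$ and $|y|\le Q_1l_k^{1/2}$) produces the same small factor. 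Hence the total error is absorbed into $\tfrac12 E_\lda^{(1)}+E_\lda^{(2)}$ for large $k$, yielding \eqref{eq:lem37b}.

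The main technical obstacle will be the bookkeeping at the transition radius $r\sim |S_k|$, where the two regimes of $E_\lda^{(1)}$ and the two cases of \eqref{bik}--\eqref{ck} meet, and at the outer boundary $r\sim Q_1l_k^{1/2}$ where $|S_k|$ may saturate the bound $\bar C l_k^{1/2}$. It is precisely the critical equality $\tau=(n-2)/2$ together with Proposition~\ref{prop:x-k-estimates} that forces every error ratio to pick up an extra factor of $M_k^{-1/2}$, so that the absorption holds uniformly in $\lda\in[1/2,2]$.
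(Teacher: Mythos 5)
Your strategy is the same as the paper's (explicit radial representation, the relations of Remark \ref{rem:quantities}, the conformal--normal cancellation \eqref{rrr} in the inner region, and a direct comparison of the drift/curvature errors with $E_\lda^{(1)}$, $E_\lda^{(2)}$ outside), but two of your stated estimates are false as written. The main one is the outer bound $|f_{\lda}^{(1)}(r)|\le CM_k^{-1}r^{-\tau}$ on $\sigma|S_k|\le r\le Q_1l_k^{1/2}$. From the representation, $(f_{\lda}^{(1)})'(r)<0$, so $|f_{\lda}^{(1)}|$ is \emph{nondecreasing} in $r$, and already at $r=\sigma|S_k|$ it is bounded below (up to constants, and up to a logarithm when $n=6$) by $M_k^{-1}|S_k|^{-\tau}$, since $E_\lda^{(1)}\ge l_k^{-\tau}|S_k|^{\tau-4}|y|^{4-n}$ there. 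Since the only information available is $|S_k|\to\infty$ and $|S_k|\le\bar C l_k^{1/2}$, one may well have $|S_k|\ll l_k^{1/2}$, in which case at $r\sim Q_1l_k^{1/2}$ your bound is of order $M_k^{-3/2}$ while $|f_{\lda}^{(1)}|$ there is at least of order $M_k^{-1}|S_k|^{-\tau}\gg M_k^{-3/2}$; so the claimed decay cannot hold. You then feed this bound into the $\bar c\,f_{\lda}^{(1)}$ term in the outer region. The correct statement is the \emph{uniform} bound $|f_{\lda}^{(1)}|\le CM_k^{-1}\bigl(|S_k|^{-\tau}+\sigma_k|S_k|^{\tau-2}\bigr)=o(1)M_k^{-1}$ (this is \eqref{eq:f-1-out}); with it, $|\bar c\,f_{\lda}^{(1)}|\le o(1)M_k^{-2}|y-S_k|^{\tau-2}$, and the comparison closes via the lower bound $E_\lda^{(1)}+E_\lda^{(2)}\ge \frac{1}{C}M_k^{-2}|y-S_k|^{\tau-2}$ of \eqref{eq:E-lda-low} (or via your two-case comparison against $E_\lda^{(2)}$ near $S_k$ and $E_\lda^{(1)}$ away from it, which still works with the corrected, non-decaying factor). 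Your derivative bounds $|\partial_r f_{\lda}^{(1)}|\le CM_k^{-1}r^{-n/2}$ and $|\partial_r^2 f_{\lda}^{(1)}|\le CM_k^{-1}r^{-(n+2)/2}$ are correct, so the $\bar b$ and $\bar d$ contributions are unaffected; also note the lemma asserts strict negativity, which is immediate since $E_\lda^{(1)}>0$.

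The second problem is that you lose track of the $\sigma_k$-term. In the inner region the ratio $|\bar c\,f_{\lda}^{(1)}|/E_\lda^{(1)}$ is \emph{not} dominated by $CM_k^{-1}|S_k|^{\tau}$: the piece coming from $\sigma_kM_k^{-1}|S_k|^{\tau-2}$ contributes $\sigma_k l_k^{-\tau}|S_k|^{\tau-2}|y|^{n-2}$, whose size at $|y|\sim\sigma|S_k|$ is $\sigma_kM_k^{-1}|S_k|^{(3n-10)/2}$, larger than $M_k^{-1}|S_k|^{\tau}$ for $n=5,6$; it tends to $0$ only because $\sigma_k\to0$ combined with $|S_k|\le \bar C l_k^{1/2}$ and $n\le 6$ (the paper's bound $\sigma_k l_k^{(n-6)/4}$). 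For the same reason your assertion that both pieces of the size bound peak at order $M_k^{-1}|S_k|^{-\tau}$ is wrong for the $\sigma_k$-piece (for $n=6$ it is $\sigma_kM_k^{-1}$), though the conclusion $o(1)M_k^{-1}$, and hence \eqref{eq:lem37a}, survives. With these two corrections (and the logarithmic adjustments you mention for $n=6$, plus a word on the transition annulus where $0<\chi_k<1$, which your $E_\lda^{(1)}$-comparison in fact covers), your argument coincides with the paper's proof.
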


\begin{proof} By solving the Cauchy problem of \eqref{eq:barrier-ode}, we obtain
\[
 f_{\lda}^{(1)}  (r)= - N\int_{\lda}^r \frac{1}{s^{n-1}} \left(\int_\lda^s t^{n-1} E_\lda^{(1)}(t)\,\ud t \right)\,\ud s.
\]
Since $E_\lda^{(1)} $ is positive, $f_\lda^{(1)}(r)<0$ for $r>\lda\ge \frac12$.
Note that for $p\in \R$ and $\frac12 \le \lda \le 2$,
\begin{align*}
\int_{\lda}^r \frac{1}{s^{n-1}} \left(\int_\lda^s t^{n-1} t^{p-n}\,\ud t \right)\,\ud s &=
\begin{cases} \frac{1}{p} \int_{\lda}^{r} \frac{s^p-\lda^p}{s^{n-1}}\,\ud s &\text{if $p\ne 0$,}\\
 \int_{\lda}^{r} s^{1-n} \ln \frac{s}{\lda}\, \ud s &\text{if $p=0$,}\\
 \end{cases}
\\[2mm] &
\le C \begin{cases}
1,& \quad \mbox{if }p<n-2,\\
\ln \frac{r}{\lda},& \quad \mbox{if }p=n-2, \\
r^{p+2-n},& \quad \mbox{if }p>n-2,
\end{cases}
\end{align*}
where $C>0$ depends only on $n$ and $p$.

By the expression of $E_\lda^{(1)}$ and using $p=4$ or $0$ as well as the relation
$|x_k|^{\tau}=|S_k|^{\frac{n-2}{2}} l_k^{-\frac{n-2}{2}}=|S_k|^{\frac{n-2}{2}} M_k^{-1}$ as recorded in
(iii) of  Remark \ref{rem:quantities},   we obtain, for $|y|\le \sigma |S_k|$ and when $3\le n \le 5$,
\begin{equation}\label{h-est}
\begin{split}
|f_{\lda}^{(1)}(y)|& \le
CN\left( l_k^{-4}|x_k|^{\tau-4} |y|^{6-n}+\sigma_kl_k^{-2}|x_k|^{\tau-2} \right)\\&
\le CN M_k^{-1}( |S_k|^{-\frac{n-2}{2}} +\sigma_k |S_k|^{-\frac{6-n}{2}}) \\&
=o(1) M_k^{-1}
 \quad \mbox{for }|y|\le \sigma |S_k|,
\end{split}
\end{equation}
where  $o(1)$ is with respect to $k\to \infty$; for the $n=6$ case, we only need to
modify the $|y|^{6-n}$ term in the first line into $\ln (\frac{|y|}{\lda})$ and the
$|S_k|^{-\frac{n-2}{2}}$ term in the second line into $|S_k|^{-2}\ln(\frac{|S_k|}{\lda})$.

For $|y|\ge \sigma |S_k|$ and $3\le n \le 6$---with the same modifications as above for
the $n=6$ case, we have
\be \label{eq:f-1-out}
\begin{split}
&|f_{\lda}^{(1)}(y)|\\&= |f_{\lda}^{(1)}(\sigma |S_k|)|  +N
\int_{\sigma |S_k|}^r\frac{1}{s^{n-1}}\left(\int_{\lambda}^{\sigma |S_k|}t^{n-1}|E_\lda^{(1)}(t)|\, \ud t+\int_{\sigma |S_k|}^st^{n-1}|E_\lda^{(1)}(t)|dt\right)\,\ud s \\ & \le
CN\left( l_k^{-4}|x_k|^{\tau-4} |S_k|^{6-n}+\sigma_kl_k^{-2}|x_k|^{\tau-2} \right) \\
& \quad + CN \Big(l_k^{-4} |x_k|^{\tau-4} |S_k|^{6-n}  +\sigma_kl_k^{-2}|x_k|^{\tau-2}|S_k|^{2-n} \ln|S_k| \Big) \\& \quad +CNl_k^{-\tau}|S_k|^{\tau+2-n}\\&
\le CN M_k^{-1} \Big( |S_k|^{-\frac{n-2}{2}} +\sigma_k |S_k|^{-\frac{6-n}{2}}  \Big)=o(1)M_k^{-1}   .
\end{split}
\ee
Hence, \eqref{eq:lem37a} is proved.

Next, we  shall prove \eqref{eq:lem37b}.

i). If $|y|<\sigma |S_k|$, we have
$\Delta_{\bar g_k}  f_{\lda}^{(1)}(y)= \Delta f_{\lda}^{(1)}(y)=-N E_\lda^{(1)}$ and
\begin{align*}
|\bar c(y) f_{\lda}^{(1)}(y)| &\le CN l_k^{-4}|x_k|^{\tau-4}|y|^2 \left( l_k^{-4}|x_k|^{\tau-4} |y|^{6-n}+\sigma_kl_k^{-2}|x_k|^{\tau-2} \right)\\
&= CN l_k^{-\tau} |S_k|^{\tau-4}|y|^{4-n}\left(l_k^{-\tau} |S_k|^{\tau-4}|y|^4+
\sigma_k l_k^{-\tau} |S_k|^{\tau-2} |y|^{n-2}\right).
\end{align*}
 Recall that
 \begin{align*}
 E_\lda^{(1)}(|y|)&= l_k^{-4} |x_k|^{\tau-4}|y|^{4-n} +\sigma_k l_k^{-2}|x_k|^{\tau-2}|y|^{-n}\\
 &= l_k^{-\tau} |S_k|^{\tau-4}|y|^{4-n}\left(1+\sigma_k |S_k|^2 |y|^{-4}\right)\\
 &\ge l_k^{-\tau} |S_k|^{\tau-4}|y|^{4-n}
 \end{align*}
  for $|y|<\sigma |S_k |$.  We shall show that $|\bar c(y)f_{\lda}^{(1)}(y)|=o(1) E_\lda^{(1)}(y)$.

First we note that for $|y|<\sigma |S_k |$,
\[
l_k^{-\tau} |S_k|^{\tau-4}|y|^4\le \sigma^4 l_k^{-\tau} |S_k|^{\tau}=\sigma^4 |x_k|^{\tau} \to 0.
\]
Furthermore,
\begin{align*}
\sigma_k l_k^{-\tau} |S_k|^{\tau-2} |y|^{n-2} &\le \sigma_k \sigma^{n-2} l_k^{-\tau} |S_k|^{\tau-2+n-2}\\
&\le \sigma_k \sigma^{n-2} Q_1^{\frac{3n-10}{2}} l_k^{-\tau+ \frac{3n-10}{4}}\\
& = \sigma_k \sigma^{n-2} Q_1^{\frac{3n-10}{2}} l_k^{\frac{n-6}{4}}\to 0,\\
\end{align*}
when $n\le 6$, where we have used $|y|\le |S_k|\le Q_1 l_k^{1/2}$.
Therefore, $|\bar c(y) f_{\lda}^{(1)}(y)|=o(1) E_\lda^{(1)}(y)$ for $|y|\le \sigma|S_k|$.
In conclusion,
\be \label{eq:37b-interior}
\left[ \Delta +\bar b_i(y)\partial_i+\bar d_{ij}(y)\partial_{ij}-\bar c(y)\right]f_{\lda}^{(1)}(y) = (-N+o(1)) E_{\lda}^{(1)} (y) \quad \mbox{for } |y|\le \sigma|S_k|.
\ee

ii). If $ \sigma |S_k| \le |y|\le Q_1 l_k^{1/2}$, then
$l_k^{-\tau}|S_k|^{2-n}\ge \sigma^{n-2}Q_{1}^{2-n}l_{k}^{2-n}$ and
$$E_{\lda}^{(2)}(y)= l_k^{-\tau}|S_k|^{2-n}|y-S_k|^{\tau-2}\chi_k
\ge \sigma^{n-2}Q_{1}^{2-n}l_{k}^{2-n} |y-S_k|^{\tau-2}
= \sigma^{n-2}Q_{1}^{2-n} M_{k}^{-2}|y-S_k|^{\tau-2}$$
when $|y-S_k|\le |S_{k}|/4$; when $|y-S_k|\ge |S_{k}|/4$, $|y-S_k|\ge |y|- |S_{k}|\ge |y|-4|y-S_k|$,
so $5 |y-S_k|\ge |y|$ and  due to $\tau\le 2$, we have
 $ |y-S_k|^{\tau-2}|/|y|^{\tau-n}\le 5^{2-\tau}|y|^{n-2}\le C l_k^{\frac{n-2}{2}}$ for some $C>1$,
and
\be \label{eq:E-lda-low}
\begin{split}
E_{\lambda}^{(1)}(|y|)+E_{\lda}^{(2)}(y)&= l_k^{-\tau}  |y|^{\tau-n}  + l_k^{-\tau}|S_k|^{2-n}|y-S_k|^{\tau-2}\chi_k \\&
\ge \frac{1}{C}l_k^{-\tau} |y-S_k|^{\tau-2}l_k^{\frac{2-n}{2}} =\frac{1}{C}M_k^{-2 } |y-S_k|^{\tau-2},
\end{split}
\ee
for some $C>1$---this certainly holds as well when $|y-S_k|\le |S_{k}|/4$.

In this region, we may no longer have $(\bar b_i(y)\partial_i+ d_{ij}(y)\partial_{ij})f_{\lda}^{(1)}(|y|)=0$,
but by the estimates  \eqref{bik}, \eqref{dijk} and \eqref{ck} of $\bar b_i, \bar d_{ij}$ and $\bar c$, we have
\begin{align*}
&|\bar b_i(y)\partial_i f_{\lda}^{(1)}(|y|)+d_{ij}(y)\partial_{ij}f_{\lda}^{(1)}(|y|) -\bar c(y) f_{\lda}^{(1)}(|y|)|\\&
\le Cl_k^{-\tau}\Big (|y-S_k|^{\tau-1} |\frac{\ud}{\ud r} f_{\lda}^{(1)}(|y|) | +
 |y-S_k|^{\tau} |\frac{\ud^2}{\ud r^2} f_{\lda}^{(1)}(|y|) | + |y-S_k|^{\tau-2} |f_{\lda}^{(1)}(|y|)|\Big)\\&
 =o(1) M_k^{-2} |y-S_k|^{\tau-2} = o(1) (E_{\lambda}^{(1)}(|y|)+E_{\lda}^{(2)}(y)),
\end{align*}
where we have used \eqref{eq:f-1-out},  and
\begin{align*}
|\frac{\ud}{\ud r} f_{\lda}^{(1)} (r) | = \frac{N}{r^{n-1}}\int_{\lda}^r t^{n-1} E_\lda^{(1)}(t)\,\ud t
\le CN M_k^{-1} r^{-\frac{n-2}{2}-1},
\end{align*}
\[
  |\frac{\ud^2}{\ud r^2} f_{\lda}^{(1)}(r) | \le C E^{(1)}_\lda(r)+\frac{n-1}{r}|\frac{\ud}{\ud r} f_{\lda}^{(1)} (r) |  \le CN M_k^{-1} r^{-\frac{n-2}{2}-2},
\]
when $r\ge \sigma |S_k|$.

It follows from \eqref{eq:E-lda-low} that,  for $ \sigma |S_k| \le |y|\le Q_1 l_k^{1/2}$,
\begin{align*}
&\left(\Delta + \bar b_i(y)\partial_i+d_{ij}\partial_{ij}-\bar c (y)\right) f_{\lda}^{(1)}(y) +(N-1) E_\lda^{(1)}(|y|)\\
\le &\Delta f_{\lda}^{(1)}(y)+
|\bar b_i(y)\partial_i f_{\lda}^{(1)}(y)+d_{ij}(y)\partial_{ij}f_{\lda}^{(1)}(y) -\bar c(y) f_{\lda}^{(1)}(y)|+(N-1) E_\lda^{(1)}(|y|)\\
=&-E_\lda^{(1)}(|y|)+ o(1)(E_\lda^{(1)}(|y| )+E_\lda^{(2)}(y))\\
\le &E_\lda^{(2)}(y).
\end{align*}
This completes our proof of Lemma \ref{lem:h-est-1}.
\end{proof}

Next we construct auxiliary functions to handle the $E_{\lda}^{(2)}(y)$ term.
To exploit the conformal normal property of $\bar g_k (y)$ inside $B_{\sigma |S_k|/2}$ so that
we can use radial auxiliary functions as much as possible,
we modify $\bar g_k (y)$ outside of $B_{\sigma |S_k|/2}$ into  $\hat g_k(y)$
as a smooth Riemann metric defined on $\R^n$ satisfying
\[
\hat g_k(y)=\bar g_k(y)\quad \mbox{in }B_{\sigma |S_k|/2}, \quad (\hat g_k)_{ij}(y)= \delta_{ij} \quad \mbox{in }\R^n\setminus B_{\sigma |S_k|},
\]
\[
\det \hat g_k(y)=1 \quad \mbox{in }\R^n,
\]
and
\[
|\nabla^ m ((\hat g_k)_{ij}(y)-\delta_{ij})| \le C l_k^{-2} |x_k|^{\tau-2} |y|^{2-m} \quad
\mbox{for } y\in B_{\sigma |S_k|}
\mbox{ and }m=0,1,2,
\]
where this last estimate is due to \eqref{eq:varkappa} of Lemma~\ref{lem:conf-normal}  applied to
$g_{k}(x)$ for $|x|<\sigma$ with $\bar z=x_{k}$, and  $\bar g_{k}(y)=g_{k}(l_{k}^{-1} y)$.
In particular, $|(\hat g_k)_{ij}(y)-\delta_{ij}|\le C|x_k|^{\tau} \to 0$ uniformly over $\R^n$ as $k\to \infty$.

We will also require that $y$ be a geodesic coordinate system for $\hat g_k$ on $\R^n$.
This can be
done by first expressing $\bar g_k(y) =\exp ((\bar h_k)_{ij}(y))$ for $y\in B_{\sigma |S_k|}$,
 where $(\bar h_k)_{ij}(y)= (h_k)_{ij}(l_{k}^{-1}y)$ satisfies
 (a) $\sum_{j=1}^n (\bar h_k)_{ij}(y)y_j=0$,
and (b) $\text{trace} ((\bar h_k)_{ij}(y))=0$ in $B_{\sigma |S_k|}$. Then it is trivial to
modify $(\bar h_k)_{ij}(y)$ outside of $B_{\sigma |S_k|/2}$ and
extend $(\bar h_k)_{ij}(y)$ to
$y\in R^n$ such that $(\bar h_k)_{ij}(y)=0$ for $y\in \R^n\setminus B_{\sigma |S_k|}$ while maintaining the
two conditions (a) and (b), which guarantee that $y$ is a
geodesic coordinate system for $\hat g_k$ and its determinant $\equiv 1$ on $\R^n$ .

Let  $\eta_k^{(1)}$ be the smooth solution of
\[
-\Delta_{\hat g} \eta_{k,\lda}^{(1)}(y)=|S_k|^{2-n} |y-S_k|^{-\frac{6-n}{2}} \chi_k(y)  \quad
\mbox{in }\R^n\setminus B_\lda,
\]
with the boundary condition
\[
\eta_{k,\lda}^{(1)}=0 \quad  \mbox{on }\pa B_\lda, \quad  \mbox{and}\quad  \lim_{|y|\to \infty} \eta_{k,\lda}^{(1)}(y)=0.
\]
By the upper bound estimate on the Green's function $\hat G_{k}(y, z)$ of $\Delta_{\hat g}$
due to Littman, Stampacchia and Weinberger \cite{LSW} (see also Grüter and Widman
\cite{GW}),
\[
\hat G_{k}(y, z)\le C |y-z|^{2-n} \quad \text{for some $C>0$ independent of $k$, and all
$y, z$ with $\lda < |y|, |z|$,}
\]
  we have
\begin{align*}
0\le \eta_{k,\lda}^{(1)}(y)\le &C |S_k|^{2-n} \int_{B_{|S_k|/2} (S_k)} |y-z|^{2-n}  |z-S_k|^{-\frac{6-n}{2}}  \,\ud z\\& \le
C_2\begin{cases}
|S_k|^{-\frac{n-2}{2}} &\quad \mbox{for }|y|<2|S_k|,\\[2mm]
|y|^{2-n} |S_k|^{\frac{n-2}{2}}&\quad \mbox{for }|y|\ge 2|S_k|.
\end{cases}
\end{align*}
Furthermore, $0\le \pa_r \eta_{k,\lda}^{(1)}(y)\le C_2 |S_k|^{-\frac{n-2}{2}}$ on  $\pa B_\lda$,
where $C_2>0$ is independent of $\lda$ and $k$ (if $k$ is large),  and 
\[
\eta_{k,\lda}^{(1)}(y) \le 2C_2 |S_k|^{-\frac{n-2}{2}} (|y|-\lda) \quad \mbox{for } \lda\le |y|\le \lda+2.
\] Let
\[
\eta_{k,\lda}^{(2)}(|y|)= Q\cdot C_2|S_k|^{-\frac{n-2}{2}}  (\lda^{n-2} |y|^{2-n}-1),
\]
where $Q>0$ is depending only on $n$ such that $\eta_{k,\lda}:= \eta_{k,\lda}^{(1)}+\eta_{k,\lda}^{(2)}\le 0$ in $\R^n\setminus B_\lda$.
Since $y$ is a conformal normal coordinate for $ \hat g_k$ on
$\R^n$, $\Delta_{\hat g_k} \eta_{k,\lda}^{(2)}= \Delta \eta_{k,\lda}^{(2)}=0$.
Hence,
\[
-\Delta_{\hat g_k} \eta_{k,\lda}(y)= |S_k|^{2-n} |y-S_k|^{-\frac{6-n}{2}} \chi_k(y) \ge 0 \quad \mbox{in }\R^n\setminus B_\lda,
\]
\[
\eta_{k,\lda}=0, \quad \pa_r \eta_{k,\lda} <0 \quad \mbox{on }\pa B_\lda.
\]
Moreover,
\be \label{eq:diff-eta-k}
|\nabla^m  \eta_{k,\lda}(y)| \le C \begin{cases}
|S_k|^{-\frac{n-2}{2}}
|y-S_k|^{-m}, & \quad \frac{1}{2}\sigma |S_k| \le |y|\le 2 |S_k|, \\[2mm]
|S_k|^{-\frac{n-2}{2} }|y|^{2-n-m},  &\quad |y|\ge 2|S_k|,
\end{cases}
\ee
for $m=1,2$.
Let \[
f_{\lda}^{(2)}(y)= (2C_1+1) l_k^{-\tau}\eta_{k,\lda}(y).
\]
We have dropped the subscript $k$ of $f_{\lda}^{(2)}(y)$ here.

\begin{lem} \label{lem:h-est-2}  Let $f_{\lda}^{(2)}(y)$ be defined above. Then we have
\be \label{eq:lem38a}
f_{\lda}^{(2)}(y) \le 0 \quad \mbox{in  }\R^n\setminus B_\lda, \quad f_{\lda}^{(2)}(y) =0 \quad \mbox{on }\pa B_\lda,
\ee
and for  $\lambda<|y|<Q_1l_k^{1/2}$,
\be \label{eq:lem38b}
\left[ \Delta +\bar b_i(y)\partial_i+\bar d_{ij}(y)\partial_{ij}-\bar c(y)\right] f_{\lda}^{(2)}(y)+2C_1 E_\lda^{(2)}(y)  \le o(1)E_\lda^{(1)}(y) ,
\ee
\be  \label{eq:lem38c}
|f_{\lda}^{(2)}(y)|=o(1) M_k^{-1}\quad\text{uniformly over $\lambda<|y|<Q_1l_k^{1/2}$.}
\ee
\end{lem}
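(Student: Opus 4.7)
The plan is to exploit the identity $\tau-2 = -(6-n)/2$ (which follows from $\tau = (n-2)/2$), so that the source defining $\eta_{k,\lda}^{(1)}$ is precisely $|S_k|^{2-n}|y-S_k|^{\tau-2}\chi_k(y)$. Combined with the text's observation that $\Delta_{\hat g_k}\eta_{k,\lda}^{(2)} = \Delta \eta_{k,\lda}^{(2)} = 0$ (since $\eta_{k,\lda}^{(2)}$ is radial and $y$ is a conformal normal coordinate for $\hat g_k$ with $\det \hat g_k = 1$), this yields the clean identity
\[
\Delta_{\hat g_k} f_\lda^{(2)}(y) + (2C_1+1)\, E_\lda^{(2)}(y) = 0 \qquad \text{on } \R^n \setminus B_\lda.
\]
Claim \eqref{eq:lem38a} is then immediate: $\eta_{k,\lda}^{(1)} \ge 0$ by the maximum principle (nonnegative source, zero data on $\partial B_\lda$ and at infinity), the constant $Q$ was chosen exactly to force $\eta_{k,\lda} \le 0$ in $\R^n \setminus B_\lda$, and both $\eta_{k,\lda}^{(1)}$ and $\eta_{k,\lda}^{(2)}$ vanish at $|y|=\lda$. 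Claim \eqref{eq:lem38c} follows from the displayed $L^\infty$ bounds on $\eta_{k,\lda}$: multiplying by $(2C_1+1)l_k^{-\tau} = (2C_1+1) M_k^{-1}$ and using $|S_k| \to \infty$ gives $|f_\lda^{(2)}| \le C l_k^{-\tau}|S_k|^{-(n-2)/2} = o(1) M_k^{-1}$ uniformly on $\lda < |y| < Q_1 l_k^{1/2}$.

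For claim \eqref{eq:lem38b}, the identity above lets us rewrite the left-hand side as
\[
L_{\bar g_k} f_\lda^{(2)} + 2C_1 E_\lda^{(2)} = (\Delta_{\bar g_k} - \Delta_{\hat g_k}) f_\lda^{(2)} - \bar c\, f_\lda^{(2)} - E_\lda^{(2)}.
\]
Since $-E_\lda^{(2)} \le 0$, it suffices to establish $|(\Delta_{\bar g_k} - \Delta_{\hat g_k}) f_\lda^{(2)}| + |\bar c f_\lda^{(2)}| = o(1)(E_\lda^{(1)} + E_\lda^{(2)})$. I would split by region. In $|y| \le \sigma|S_k|/2$, $\bar g_k = \hat g_k$ and $\chi_k \equiv 0$, so only $|\bar c f_\lda^{(2)}|$ contributes; combining the interior bound $|\bar c| \le C l_k^{-4}|x_k|^{\tau-4}|y|^2$, the $L^\infty$ bound $|f_\lda^{(2)}| \le C l_k^{-\tau}|S_k|^{-(n-2)/2}$, and the identity $l_k^{-4}|x_k|^{\tau-4} = l_k^{-\tau}|S_k|^{\tau-4}$ from Remark \ref{rem:quantities}, one gets $|\bar c f_\lda^{(2)}| \le C l_k^{-\tau}|S_k|^{(n-2)/2} E_\lda^{(1)}(y)$, with $l_k^{-\tau}|S_k|^{(n-2)/2} \le C l_k^{-(n-2)/4} \to 0$ by Proposition \ref{prop:x-k-estimates}. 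In $|y| > \sigma|S_k|/2$, I would use the gradient/Hessian bound \eqref{eq:diff-eta-k} on $\eta_{k,\lda}$, the exterior estimate $|\hat g_k - \bar g_k|\le C l_k^{-\tau}|y-S_k|^\tau$ (with first/second derivative analogues, interpolated in the transition annulus $\sigma|S_k|/2 < |y| \le \sigma|S_k|$), and $|\bar c| \le C l_k^{-\tau}|y-S_k|^{\tau-2}$, to produce the bound $C l_k^{-2\tau}|S_k|^{-(n-2)/2}|y-S_k|^{\tau-2}$ in the annulus $\sigma|S_k|/2 \le |y| \le 2|S_k|$; this is $o(1) E_\lda^{(2)}$ when $\chi_k \neq 0$ and is absorbed into $E_\lda^{(1)}$ otherwise using $|y-S_k| \gtrsim |y|$. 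The region $|y| \ge 2|S_k|$ is handled analogously using the decay bound $|\eta_{k,\lda}| \le C|y|^{2-n}|S_k|^{(n-2)/2}$ and comparison with $E_\lda^{(1)} = l_k^{-\tau}|y|^{\tau-n}$.

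The main obstacle will be extracting a uniform $o(1)$ across all regions with matching decay rate; the gain $l_k^{-(n-2)/4} \to 0$ relies crucially on $|S_k| \le \bar C l_k^{1/2}$ from Proposition \ref{prop:x-k-estimates} and on $\tau \le 2$ (i.e., $n \le 6$), and the transition annulus where $\hat g_k$ is artificially modified from $\bar g_k$ requires care in verifying that $|\hat g_k - \bar g_k|$ remains controlled by the same type of estimate as $|\bar g_k - \delta|$, so that no new error terms dominate. A secondary subtlety is the boundary of $\mathrm{supp}\,\chi_k$, where both $E_\lda^{(1)}$ and $E_\lda^{(2)}$ are in play and must be weighed against each other cleanly.
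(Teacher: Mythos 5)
Your proposal is correct and follows essentially the same route as the paper: exploit the exact identity $\Delta_{\hat g_k} f_\lda^{(2)} = -(2C_1+1) E_\lda^{(2)}$ arising from the construction (with $\tau-2 = -(6-n)/2$ and $\Delta_{\hat g_k}\eta_{k,\lda}^{(2)}=0$), split into an inner region where $\hat g_k=\bar g_k$ so only $\bar c f_\lda^{(2)}$ contributes, a transition annulus, and an outer region, and compare the resulting error to $E_\lda^{(1)}$ and $E_\lda^{(2)}$ using the coefficient bounds, the $\eta_{k,\lda}$ estimates, Remark \ref{rem:quantities}, and $|S_k|\le \bar C l_k^{1/2}$. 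One small inaccuracy: the decay bound $|\eta_{k,\lda}|\le C|y|^{2-n}|S_k|^{(n-2)/2}$ you invoke for $|y|\ge 2|S_k|$ holds for $\eta_{k,\lda}^{(1)}$ only; the radial part $\eta_{k,\lda}^{(2)}$ is merely $O(|S_k|^{-(n-2)/2})$ without further decay, but the correct non-decaying bound still yields $o(1)E_\lda^{(1)}$ there since $l_k^{-\tau}|S_k|^{-(n-2)/2}|y|^{n-2}\le C|S_k|^{-(n-2)/2}\to 0$ for $|y|\le Q_1 l_k^{1/2}$, so the argument is unaffected.
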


\begin{proof} By the definition of $f_{\lda}^{(2)}$ and its construction, \eqref{eq:lem38a} and \eqref{eq:lem38c} hold.
It remains to show \eqref{eq:lem38b}.

 If $|y|\le \frac{1}{2} \sigma |S_k|$, we have $\hat g_k=\bar g_k$, and thus
 \begin{align*}
&|\left[ \Delta +\bar b_i(y)\partial_i+\bar d_{ij}(y)\partial_{ij}-\bar c(y)\right]f_{\lda}^{(2)}(y)|\\[2mm]&
= |\bar c(y) f_{\lda}^{(2)}(y) |\\&
\le C l_k^{-4}|x_k|^{\tau-4}|y|^2  \cdot l_k^{-\tau}|S_k|^{\frac{2-n}{2}}.
\end{align*}
It follows that
\[
\frac{|\left[ \Delta +\bar b_i(y)\partial_i+\bar d_{ij}(y)\partial_{ij}-\bar c(y)\right]f_{\lda}^{(2)}(y)| }{E_{\lda}^{(1)}(y)}
\le C |y|^{n-2} l_k^{-\tau}|S_k|^{\frac{2-n}{2}}  \to 0
\]
uniformly over $|y|\le \frac{1}{2} \sigma |S_k|$ as $k\to \infty$.

Let us note that
\[
E_{\lda}^{(1)}(|y|) \ge l_k^{-4} |x_k|^{\tau-4}|y|^{4-n}  \ge \frac{1}{C} |x_k|^{\tau}|S_k|^{-(n-2) -2}\ge
|x_k|^{-\tau} M_k^{-2} |S_k|^{-2}
\]
for $\frac{1}{2}\sigma |S_k| \le |y|\le \sigma |S_k| $, and  recall  \eqref{eq:E-lda-low}
\[
E_{\lambda}^{(1)}(|y|)+E_{\lda}^{(2)}(y)\ge \frac{1}{C}l_k^{-\tau} |y-S_k|^{\tau-2}|l_k^{1/2}|^{2-n}
\]
for  $ \sigma |S_k| \le |y|\le Q_1 l_k^{1/2}$. Making use of \eqref{eq:diff-eta-k}, the properties of $\hat g_k$, and the coefficients estimates \eqref{bik}, \eqref{dijk} and \eqref{ck}, we obtain
\[
|\left( \Delta-\Delta_{\hat g_k}\right) f_{\lda}^{(2)}(y)| \le C M_k^{-2} |S_k|^{-2} =o(1)E_\lda^{(1)}(|y|)\quad \mbox{if }\frac12\sigma |S_k|\le  |y|\le \sigma |S_k|,
\]
\[
|(\Delta-\Delta_{\hat g_k}) f_{\lda}^{(2)}(y)| =0 \quad \mbox{if }|y|> \sigma |S_k|,
\]
and
\begin{align*}
&|(\bar b_i(y)\partial_i+\bar d_{ij}(y)\partial_{ij}-\bar c(y))f_{\lda}^{(2)}(y)|\\&
\le C \begin{cases}
 M_k^{-2} |S_k|^{-2} ,& \quad \frac{1}{2}\sigma |S_k| \le |y|\le \sigma |S_k|\\
l_k^{-\tau}|y-S_k|^{\tau-2} \cdot l_k^{-\tau}|S_k|^{\tau+2-n},& \quad  \sigma |S_k| \le |y|\le Q_1 l_k^{1/2}
\end{cases}\\&=o(1)\left(E_{\lambda}^{(1)}(|y|)+E_{\lda}^{(2)}(y)\right).
\end{align*}
Hence,
\[
\left[ \Delta +\bar b_i(y)\partial_i+\bar d_{ij}(y)\partial_{ij}-\bar c(y)\right]f_{\lda}^{(2)}(y) = -2(C_1+1) E_\lda^{(2)}(y)+o(1)\left(E_{\lambda}^{(1)}(|y|)+E_{\lda}^{(2)}(y)\right),
\]
which implies \eqref{eq:lem38b}. This completes our  proof of Lemma \ref{lem:h-est-2}.

\end{proof}








\begin{prop} \label{prop:final} Let $f_{\lda}= f_{\lda}^{(1)}+f_{\lda}^{(2)}$.
By taking $N>2C_1+2$ in \eqref{eq:barrier-ode},   we have
\[
\left[ \Delta +\bar b_i(y)\partial_i +\bar d_{ij}(y)\partial_{ij}-\bar c(y)\right] f_{\lda}(y)+\xi_\lda(y) f_{\lda}(y)
\le -|E_{\lambda}(y)| \quad \mbox{in }\Sigma_{\lda}^k,
\]
\be \label{fsign}
f_{\lda}(y)=0\quad\text{on $\partial B_{\lda}$,}\quad f_{\lda}(y)<0\quad\text{in $\Sigma_{\lda}^k$,}
\ee
\be \label{fcontrol}
|f_{\lda}(y)|+|\nabla f_{\lda}(y)|=o(1) M_k^{-1} \quad\text{uniformly in $\Sigma_{\lda}^k$ as $k\to \infty$.}
\ee
\end{prop}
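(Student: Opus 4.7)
The plan is to deduce Proposition \ref{prop:final} by linearly combining the two key inequalities from Lemmas \ref{lem:h-est-1} and \ref{lem:h-est-2} and then absorbing the $E_\lambda$-bound from Proposition \ref{e-e-lam}. First I would add \eqref{eq:lem37b} and \eqref{eq:lem38b} to obtain
\[
\left[\Delta+\bar b_i\partial_i+\bar d_{ij}\partial_{ij}-\bar c\right] f_\lda \;\le\; -(N-1)E_\lda^{(1)}-2C_1 E_\lda^{(2)}+E_\lda^{(2)}+o(1)E_\lda^{(1)},
\]
which, with $N>2C_1+2$ and $k$ large, is bounded above by $-(2C_1+1)E_\lda^{(1)}-(2C_1-1)E_\lda^{(2)}$. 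Since $|E_\lambda|\le C_1(E_\lda^{(1)}+E_\lda^{(2)})$ by Proposition~\ref{e-e-lam}, this is at most $-|E_\lambda|$ (taking $N$ slightly larger if needed to ensure a uniform $C_1+1$ margin).

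Next I would handle the zeroth-order term $\xi_\lda f_\lda$. Here I combine the sign information $f_\lda^{(1)}<0$ from \eqref{eq:lem37a} and $f_\lda^{(2)}\le 0$ from \eqref{eq:lem38a}, giving $f_\lda\le 0$ in $\Sigma_\lda^k$, while $\xi_\lda>0$ by its definition \eqref{eq:xi-def} and positivity of $v_k, v_k^\lda$. Hence $\xi_\lda f_\lda\le 0$ and the differential inequality is only strengthened by its addition, yielding the first claim. The boundary behavior $f_\lda=0$ on $\partial B_\lda$ follows from the Cauchy data in \eqref{eq:barrier-ode} for $f_\lda^{(1)}$ and the boundary condition $\eta_{k,\lda}=0$ on $\partial B_\lda$ for $f_\lda^{(2)}$; strict negativity in $\Sigma_\lda^k$ follows from the two lemmas.

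For the pointwise bound in \eqref{fcontrol}, the estimates $|f_\lda^{(1)}|=o(1)M_k^{-1}$ from \eqref{eq:lem37a} and $|f_\lda^{(2)}|=o(1)M_k^{-1}$ from \eqref{eq:lem38c} combine directly. The gradient estimate splits into two pieces: for $f_\lda^{(1)}$, which is radial, differentiating the explicit integral representation of the ODE solution of \eqref{eq:barrier-ode} (as already done in the proof of Lemma \ref{lem:h-est-1}) gives $|\partial_r f_\lda^{(1)}|=o(1)M_k^{-1}$; for $f_\lda^{(2)}$, the gradient bound on $\eta_{k,\lda}$ from \eqref{eq:diff-eta-k} handles the outer region $|y|\ge \frac12\sigma|S_k|$, while in the inner region $\chi_k\equiv 0$ and $\eta_{k,\lda}$ is $\hat g_k$-harmonic, so interior gradient estimates applied to the $L^\infty$ bound just obtained yield the same $o(1)M_k^{-1}$ decay.

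I do not expect any conceptual obstacle: the proposition is essentially a bookkeeping consolidation of the two preceding lemmas together with Proposition \ref{e-e-lam}. The only mildly delicate point is to ensure that the $o(1)E_\lda^{(1)}$ error in \eqref{eq:lem38b} and the corresponding error implicit in the addition do not compromise the strict sign on the right-hand side; this is arranged by choosing $N$ strictly greater than $2C_1+2$ so that a definite positive margin remains once the $o(1)$ terms are absorbed for sufficiently large $k$.
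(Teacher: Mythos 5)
Your argument reproduces the paper's proof: add the two differential inequalities from Lemmas \ref{lem:h-est-1} and \ref{lem:h-est-2}, use $\xi_\lda\ge 0$ and $f_\lda\le 0$ to drop the zeroth-order term, and read off \eqref{fsign}--\eqref{fcontrol} from the constructions, supplementing with the gradient bounds already worked out in the lemmas' proofs.

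One small inaccuracy: after adding \eqref{eq:lem37b} and \eqref{eq:lem38b} as stated, the coefficient of $E_\lda^{(2)}$ is $1-2C_1$, and your claim that this is $\le -C_1$ needs $C_1\ge 1$; raising $N$ does not help here, since $N$ only multiplies $E_\lda^{(1)}$. This is harmless (take $C_1\ge 1$ without loss in Proposition \ref{e-e-lam}), but the clean route --- which the paper takes --- is to recall from the proof of Lemma \ref{lem:h-est-2} the sharper intermediate estimate $\left[\Delta+\bar b_i\partial_i+\bar d_{ij}\partial_{ij}-\bar c\right]f_\lda^{(2)}= -(2C_1+1)E_\lda^{(2)}+o(1)\big(E_\lda^{(1)}+E_\lda^{(2)}\big)$, which makes the combined $E_\lda^{(2)}$ coefficient $-2C_1+o(1)\le -C_1$ with no extra assumption.
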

We remark that when $y\in \Sigma_{\lda}^k$, $|y|\le Q_1 l_k^{1/2}$, so $|y|^{2-n}\ge Q_1^{2-n} l_k^{\frac{2-n}{2}}$,
and the estimate $|f_{\lda}(y)|=o(1) M_k^{-1}$ certainly implies \eqref{fest}, namely,
$|f_{\lda}(y)|+|\nabla f_{\lda}(y)|=o(1) |y|^{2-n}$ on $\Sigma_{\lda}^k$.

\begin{proof}  By Lemma \ref{lem:h-est-1} and Lemma \ref{lem:h-est-2}, we have
\begin{align*}
&\left[ \Delta +\bar b_i(y)\partial_i +\bar d_{ij}(y)\partial_{ij}-\bar c(y)\right]f_{\lda}(y)+\xi_\lda(y) f_{\lda}(y) \\&
\le -(N- 1) E_\lda^{(1)}(y) + E_\lda^{(2)}(y) -(2C_1+1) E_\lda^{(2)}(y) +o(1)E_\lda^{(1)}(y)  \\&
=-C_1 (E_\lda^{(1)}(y) +E_\lda^{(2)}(y))\le -|E_\lda(y)|,
\end{align*}
where we have used $\xi_\lda(y)\ge 0$.

\eqref{fsign} and \eqref{fcontrol} follow from our construction of
$f_{\lda}^{(1)}$ and $f_{\lda}^{(2)}$ in
Lemmas  \ref{lem:h-est-1} and \ref{lem:h-est-2} respectively. This completes our proof.
\end{proof}

\subsection{Case of $ n\ge  7$.}

The way we handled the term $\bar c(y)v_k^{\lambda}(y)- (\frac{\lambda}{|y|})^{n+2}
\bar c(y^{\lambda})v_k(y^{\lambda})$ for $|y|\le \frac{\sigma}{2}|S_k|$ in constructing
$f_{\lda}^{(1)}(r)$ when $3\le n \le 6$ is
no longer adequate for the  $n\ge 7$ cases.
We will need to modify the construction of $f_{\lda}^{(1)}(r)$ here, which uses more delicate estimates.
The key is to identify leading order terms as given in \eqref{eq:724-h} and construct the auxiliary function
$f_\lda^{(3)}$ with respect to these leading order terms as in \eqref{f3sol}.
In this process we need to obtain refined estimates for $u_k(z)$ near $z=0$,
or equivalently for $v_k(y)-U(y)$ in terms
of $|x_k|$ and $|S_k|^{-1}$, as given by Lemma \ref{lem:sigma-k}.

Since  $\tau=\frac{n-2}{2}>2$ if $n\ge 7$, we can expand the conformal  Laplacian as
\[
L_{\bar g_k}= \Delta +\bar b_i(y)\partial_i +\bar d_{ij}(y)\partial_{ij}-\bar c(y)
\]
with
\begin{align}\label{bik-2}
\bar b_i(y)=&l_k^{-1}b_i(l_k^{-1}y)\nonumber \\[2mm]
=& \begin{cases}
O(l_k^{-2}|x_k|^{\tau-2}|y|),&\quad |y|<\sigma  |S_k|,\\[2mm]
O(M_k^{-1}|y|^{\tau-1}),&\quad |y|>\sigma  |S_k|,
\end{cases}
\end{align}
\begin{align}\label{dijk-2}
\bar d_{ij}(y)&=d_{ij}(l_k^{-1}y)\nonumber \\[2mm]& =
\begin{cases}
O(l_k^{-2}|x_k|^{\tau-2}|y|^2),\quad |y|<\sigma  |S_k|,\\[2mm]
O(M_k^{-1}|y|^{\tau}),\quad |y|>\sigma  |S_k|,
\end{cases}
\end{align}
and
\begin{align}\label{ck-2}
\bar c(y)&=c(n)R_{\bar g_k}(y)=c(n)l_k^{-2}R_{g_k}(l_k^{-1}y)\nonumber \\&= \begin{cases}
O(l_k^{-4}|x_k|^{\tau-4}|y|^2),\quad |y|<\sigma  |S_k|,\\[2mm]
O(M_k^{-1}|y|^{\tau-2}),\quad |y|>\sigma |S_k|.
\end{cases}
\end{align}
Recall that
\begin{eqnarray*}
E_{\lambda}(y)&=& \left(\bar c(y)v_k^{\lambda}(y)- (\frac{\lambda}{|y|})^{n+2}
\bar c(y^{\lambda})v_k(y^{\lambda})\right)
-\Big(\bar b_i(y)\partial_i v_k^{\lambda}(y)+\bar d_{ij}(y)\partial_{ij}v^{\lambda}_k(y)\Big)\\
&& +(\frac{\lambda}{|y|})^{n+2}\left(\bar b_i(y^{\lambda})
\partial_i v_k(y^{\lambda})+
\bar d_{ij}(y^{\lambda})\partial_{ij}v_k(y^{\lambda})\right).
\end{eqnarray*}

We rescale $g_k$ and $u_k$ so that the local maximum of $u_k$ at $z=0$ and its singular point at
$z=-x_k$ are of unit distance.
Let $(\varpi_k)_{ij}(z)= (g_k)_{ij}(|x_k| z)$ for $|z|\le 1$.
By \eqref{eq:coord-u}, we have
\[
\det \varpi_k=1 \quad \mbox{in }B_{\sigma}.
\]
Moreover, $\|\varpi _k\|_{C^{n+2}(B_{\sigma})}\le C$. We shall apply the results in Section \ref{sec:blow-up} to $(\varpi_k)_{ij}(z)$.
Write  $(\varpi_k)_{ij}(z)=e^{h_{ij}(z)}$, where we dropped the subscript $k$ of $h_{ij}$.
Define $H_{ij}, \tilde H_{ij}$,  $\tilde Z_\va$ and $Z_\va$  etc.\ as there.
Since \eqref{eq:varkappa} holds for $g_{k}(x)$ with $\bar z=x_{k}$, it follows that,
  for $z\in B_\sigma$,
\be\label{eq:724-a}
|\nabla ^m h_{ij}(z)|\le C |x_k|^{\tau}, \quad m=0,\dots, n+2,
\ee
for some $C$ independent of $k$.
 Furthermore, it follows from \eqref{eq:slc-expn-1} and \eqref{eq:724-a} that
\be  \label{eq:724-b}
\begin{split}
|R_{\varpi_k}(z)-\pa_i\pa_j H_{ij}(z)| &\le C \sum_{|\al|=2}^{d} \sum_{i, j}
 |h_{ij \al}|^2 |z|^{2|\al|-2} +C\|h_{ij}\|_{C^{n-2}(B_{\sigma})} |z|^{n-5} \\&
\le C( |x_k|^{2\tau} + |x_k|^{\tau}|z|^{n-5}) ,
\end{split}
\ee
 where $  H_{ij}=\sum_{l=4}^{n-4} \pa_i\pa_j  H_{ij}^{(l)} $ if $n\ge 8$ while $ H_{ij} =0$ otherwise.
Define
\[
\psi_k(z)=|x_k|^{\frac{n-2}{2}} u_k(|x_k| z), \quad  |z|\le \frac1{2|x_k|}.
\] Then
\[
-L_{\varpi_k}\psi_k= n(n-2) \psi_k^{\frac{n+2}{n-2}} \quad \mbox{in }B_{\frac1{2|x_k|}}\setminus \{-\frac{x_k}{|x_k|}\},
\]
 $0$ is a local maximum point of $\psi_k$ and $\psi_k(0)=|S_k|^{\frac{n-2}{2}}\to \infty$ as $k\to \infty$.

 If $n\ge 25$, we shall verify \eqref{eq:weyl-vanishing-2} in the current setting:
 \be \label{eq:weyl-vanishing-3}
 \sum_{l=1}^{d}  |\nabla ^l \varpi_k(\bar z)|^2 \va_{\bar z, k}^{2l} |\ln \va_{\bar z, k} |^{\theta_{l}}
 = o(1)\va_{\bar z, k}^{n-2}
 \ee
 for any $\bar z\in B_{\sigma}$ with $\psi_k(\bar z)\ge 1$, where
 $\va_{\bar z, k} = \psi_k(\bar z)^{-\frac{2}{n-2}}$.
 By \eqref{eq:724-a} and \eqref{eq:25-wk-bd-1}, we have,  for any point $\bar z\in B_{\sigma}$,
\[
|\nabla ^l  \varpi_k(\bar z)|^2  \le C |x_k|^{2\tau} \le C u_{k}(|x_k|\bar z)^{-1} , \quad l=1,2\dots, n+2,
\]
here, we have used $|x_{k}|\le C|\phi_{x_{k}}(|x_{k}|\bar z)|$ for
$\bar z \in B_{\sigma}$. Furthermore, using
\[
\va_{\bar z, k}=|x_{k}|^{-1} u_{k}(|x_k|\bar z)^{-\frac{2}{n-2}},
\]
we get
\[
u_{k}(|x_k|\bar z)^{-1}= \va_{\bar z, k}^{\frac{n-2}{2}}|x_{k}|^{\frac{n-2}{2}}\le
C \va_{\bar z, k}^{\frac{n-2}{2}} u_{k}(|x_k|\bar z)^{-1/2},
\]
from which we get
\[
|x_k|^{n-2}\le C u_{k}(|x_k|\bar z)^{-1}\le C \va_{\bar z, k}^{n-2}.
\]
Then, for $1\le l \le d$,
\[
|\nabla ^l \varpi_{k}(\bar z)|^2  \va_{\bar z, k}^{2l} \le C |x_{k}|^{n-2}  \va_{\bar z, k}^{2l}
\le C  \va_{\bar z, k}^{2l+n-2},
\]
and for $l=d$,
\[
|\nabla ^d \varpi_{k}(\bar z)|^2  \va_{\bar z, k}^{2d} |\ln \va_{\bar z, k} |
\le C \va_{\bar z, k}^{2(n-2)} |\ln \va_{\bar z, k} |=o(1) \va_{\bar z, k}^{n-2}.
\]
It is now clear that \eqref{eq:weyl-vanishing-3} holds.

 We are now ready to deal with the case of $ n \ge 7$.
 It follows from Proposition \ref{prop:KMS-8.2}
 that  $0$ is an isolated simple blow up point of $\psi_k$ with some $\rho>0$ independent of $k$. We may take $\rho=\sigma/2$ without loss of generality.  Note that
 \[
 v_k(y)= |S_k|^{-\frac{n-2}{2}} \psi_k(|S_k|^{-1} y)
 \]
 and $\bar g_{k}(y)= \varpi_k(|S_k|^{-1} y)$.


\begin{lem}\label{lem:sigma-k} Let $\va_k= |S_k|^{-1}$.  We have
\[
|\nabla^m(v_k-U-\tilde Z_{\va_k})(y)|\le C\va_k^{n-3}(1+|y|)^{-1-m} , \quad |y|\le \frac{\sigma}{2} |S_k|,
\]
where $C>0$, $m=0,1,2$, and $\tilde Z_{\va_k}$ solves \eqref{eq:corr-1} with  the bound
\[
|\nabla ^m \tilde Z_{\va_k}(y) |\le C\min\{ |x_k|^{\tau} \va_k ^{4},
\va_k^{\frac{n-2}{2}} \}(1+|y|)^{6-n-m}.
\]
\end{lem}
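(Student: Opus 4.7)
The approach is to recognize the first estimate as an instance of the isolated-simple-blow-up analysis of Section~\ref{sec:blow-up} applied to the rescaled sequence $\psi_k(z)=|x_k|^{(n-2)/2}u_k(|x_k|z)$, and to obtain the second estimate by plugging the available bounds on the Taylor coefficients $h_{ij\al}$ of $\varpi_k=e^{h_{ij}}$ directly into the decay formula \eqref{eq:corr-2} for the corrector.

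For the first estimate, $0$ is a local maximum of $\psi_k$ with $\psi_k(0)=|S_k|^{(n-2)/2}\to\infty$, and as noted in the paragraph preceding the lemma it is an isolated simple blow-up point of $\psi_k$ in $B_{\sigma/2}$ (via Proposition~\ref{prop:KMS-8.2} when $n\le 24$, and via the same proposition together with the verification \eqref{eq:weyl-vanishing-3} of the Weyl-type vanishing when $n\ge 25$). Setting $\tilde\va_k:=\psi_k(0)^{-2/(n-2)}=|S_k|^{-1}=\va_k$, the change of variables $y\mapsto\va_k y$ combined with $M_k=l_k^{(n-2)/2}$ and $|S_k|=l_k|x_k|$ shows that the blow-up rescaling $\tilde\va_k^{(n-2)/2}\psi_k(\tilde\va_k y)$ coincides with $v_k(y)$. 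Proposition~\ref{prop:isol-to-isol-sim} then yields the first inequality of the lemma on $|y|\le \rho/\va_k=(\sigma/2)|S_k|$.

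For the second estimate, I would feed \eqref{eq:724-a} and \eqref{eq:weyl-vanishing} into \eqref{eq:corr-2}, exploiting that in the stated range $|y|\le (\sigma/2)|S_k|$ the product $\va_k(1+|y|)$ is uniformly bounded by $\sigma/2+\va_k$. The algebraic identity
\[
\va_k^{|\al|}(1+|y|)^{|\al|+2-n-m}=\va_k^{4}(1+|y|)^{6-n-m}\bigl(\va_k(1+|y|)\bigr)^{|\al|-4}
\]
valid for $|\al|\ge 4$ singles out $|\al|=4$ as the dominant contribution and absorbs the $|\al|>4$ terms into a universal constant. The trivial coefficient bound $|h_{ij\al}|\le C|x_k|^\tau$ from \eqref{eq:724-a} then produces the first branch $|x_k|^\tau\va_k^4(1+|y|)^{6-n-m}$. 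The Weyl-type bound \eqref{eq:weyl-vanishing} applied at $|\al|=4$ (which requires $4\le d$; for smaller $n$ the first branch is the effective one anyway) together with the residual control for $d<|\al|\le n-4$ via $\va_k(1+|y|)\le C$ produces the alternative branch $\va_k^{(n-2)/2}(1+|y|)^{6-n-m}$; taking the minimum of the two gives the stated bound.

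The main technical obstacle is the bookkeeping for the sum over $|\al|$ in the range $d<|\al|\le n-4$, where \eqref{eq:weyl-vanishing} does not directly apply: one must combine $|h_{ij\al}|\le C|x_k|^\tau$ with enough powers of the bounded factor $\va_k(1+|y|)$ to absorb the excess growth, using $|x_k|\le \bar Cl_k^{-1/2}$ from Proposition~\ref{prop:x-k-estimates} to compare the two branches of the minimum. Beyond this mild bookkeeping, the argument is a direct application of Proposition~\ref{prop:isol-to-isol-sim} and the linear decay estimate \eqref{eq:corr-2}, requiring no substantively new ingredients.
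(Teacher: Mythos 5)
Your proposal is correct and matches the paper's (very brief) proof: the paper simply cites Propositions~\ref{prop:KMS-8.2} and~\ref{prop:isol-to-isol-sim} applied to $\psi_k$ with respect to $\varpi_k$, noting that $v_k$ is the rescaling of $\psi_k$ by $|S_k|$, which is exactly your first part. For the $\tilde Z_{\va_k}$ bound (which the paper asserts without detail), your bookkeeping based on \eqref{eq:corr-2}, \eqref{eq:724-a}, and the factorization via $\va_k(1+|y|)\le C$ is the right idea; a small simplification worth noting is that once the first branch $|x_k|^\tau\va_k^4(1+|y|)^{6-n-m}$ is established, the second branch follows directly from the arithmetic $|x_k|^\tau\va_k^4=|S_k|^{\tau-4}M_k^{-1}\le C|S_k|^{-\tau}=C\va_k^{(n-2)/2}$ (using $|S_k|\le\bar C l_k^{1/2}$ from Proposition~\ref{prop:x-k-estimates} and $M_k=l_k^\tau$), so no separate appeal to \eqref{eq:weyl-vanishing} at $|\al|=4$ or case-splitting on $n$ is actually needed.
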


\begin{proof}
It follows by applying Proposition \ref{prop:KMS-8.2} and Proposition~\ref{prop:isol-to-isol-sim}
to the solution $\psi_k(x)$ with respect to the metric $\varpi_k$---note that $v_k(y)$ is the normalization
of $\psi_k(x)$ by $|S_k|$.
\end{proof}


Let
\begin{align*}
V_\lda(r)= \begin{cases}
n(n-2)\frac{U(r)^{\frac{n+2}{n-2}}-U^\lda(r)^{\frac{n+2}{n-2}} }{U(r)-U^\lda(r)}& \quad \mbox{if }\lda\neq 1, \\[2mm]
n(n+2) U(r)^{\frac{4}{n-2}} &\quad \mbox{if }\lda =1.
\end{cases}
\end{align*}
Define
\[
\mathcal{O}_\lda= \{y\in B_{Q_1 l_k^{1/2}} \setminus B_\lda: v_k(y)\le 2v_k^{\lda}(y) \},
\]
where $Q_1>0$ is the constant defined in \eqref{eq:Q-1}. It is easy to see that $\mathcal{O}_\lda \subset \subset  B_{Q_1l_k^{1/2}}$. $V_\lda(y)$ provides a good approximation for $\xi_\lda (y)$, as given below.
\begin{lem} \label{lem:efft-estm}  Let $\xi_\lda $ be the function defined in \eqref{eq:xi-def}. Then we have
\[
|\xi_\lda (y)-V_\lda(y)|\le C|S_k|^{-\frac{n-2}{2} } |y|^{n-6} \quad \mbox{for }\lda\le  |y|\le \frac{\sigma }{2} |S_k|
\]
and
\[
|\xi_\lda (y)-V_\lda(y)| \le C |y|^{-4} \quad \mbox{for }y\in \mathcal{O}_\lda.
\]
\end{lem}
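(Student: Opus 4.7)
The plan is to unify the two cases in the definition of $\xi_\lda$ via the integral identity
\[
\frac{a^{(n+2)/(n-2)}-b^{(n+2)/(n-2)}}{a-b}=\frac{n+2}{n-2}\int_0^1(ta+(1-t)b)^{4/(n-2)}\,\ud t\qquad (a,b>0),
\]
which gives, for all $y\in B_{Q_1 l_k^{1/2}}\setminus \bar B_\lda$,
\[
\xi_\lda(y)-V_\lda(y)=n(n+2)\int_0^1\Big[(tv_k+(1-t)v_k^\lda)^{4/(n-2)}-(tU+(1-t)U^\lda)^{4/(n-2)}\Big]\,\ud t.
\]
Since $n\ge 7$ gives $p:=4/(n-2)\in(0,1)$ with $p-1=(6-n)/(n-2)<0$, the function $s\mapsto s^{p-1}$ is decreasing in $s>0$, and the mean value theorem yields $|a^p-b^p|\le p\min(a,b)^{p-1}|a-b|$. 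Applying this pointwise in $t$ inside the integrand, using the elementary bound $\min(ta_1+(1-t)a_2,tb_1+(1-t)b_2)\ge \min(a_1,a_2,b_1,b_2)$, and integrating in $t$, I obtain the master estimate
\[
|\xi_\lda(y)-V_\lda(y)|\le C\min(v_k,v_k^\lda,U,U^\lda)(y)^{(6-n)/(n-2)}\big(|v_k-U|(y)+|v_k^\lda-U^\lda|(y)\big).
\]

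Next I would establish the uniform lower bound $\min(v_k,v_k^\lda,U,U^\lda)(y)\ge c|y|^{2-n}$ on $|y|\ge \lda$. For $U$ this is the explicit bound $U(y)\ge c(1+|y|)^{2-n}$; for $U^\lda$ one uses $U^\lda(y)=(\lda/|y|)^{n-2}U(y^\lda)$ together with $|y^\lda|\le \lda$ keeping $U(y^\lda)$ bounded below. For $v_k$, Proposition~\ref{v-lb-2} handles $|y|\ge R$, and $C^2_{loc}$-convergence $v_k\to U$ handles $\lda\le|y|\le R$; the bound for $v_k^\lda$ follows by the analogous Kelvin manipulation from $v_k$ bounded below on $\bar B_\lda$. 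Consequently $\min^{(6-n)/(n-2)}\le C|y|^{n-6}$. To obtain the first estimate on $\lda\le|y|\le\frac{\sigma}{2}|S_k|$, I apply Lemma~\ref{lem:sigma-k} to get
\[
|v_k-U|(y)\le C\va_k^{(n-2)/2}(1+|y|)^{6-n}+C\va_k^{n-3}(1+|y|)^{-1}\le C\va_k^{(n-2)/2},
\]
valid for $n\ge 7$ since $(1+|y|)^{6-n}\le 1$ and $\va_k^{n-3}\le \va_k^{(n-2)/2}$; the Kelvin form then gives $|v_k^\lda-U^\lda|(y)=(\lda/|y|)^{n-2}|v_k-U|(y^\lda)\le C\va_k^{(n-2)/2}$ since $y^\lda\in \bar B_\lda$. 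Plugging these into the master estimate with $\va_k=|S_k|^{-1}$ yields the desired $C|S_k|^{-(n-2)/2}|y|^{n-6}$.

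For the second estimate on $\mathcal{O}_\lda$ the main obstacle is that $\mathcal{O}_\lda$ may extend outside the region $|y|\le\frac{\sigma}{2}|S_k|$ where Lemma~\ref{lem:sigma-k} applies, so the $\va_k^{(n-2)/2}$ smallness of $|v_k-U|$ is lost there. The defining condition $v_k(y)\le 2v_k^\lda(y)$ is exactly what substitutes for this smallness. Since $v_k$ is uniformly bounded on $\bar B_\lda$, the identity $v_k^\lda(y)=(\lda/|y|)^{n-2}v_k(y^\lda)$ forces $v_k^\lda(y)\le C|y|^{2-n}$ throughout $B_{Q_1 l_k^{1/2}}\setminus B_\lda$; hence on $\mathcal{O}_\lda$ also $v_k(y)\le C|y|^{2-n}$. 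Combined with $U(y), U^\lda(y)\le C|y|^{2-n}$, this gives the crude but sufficient bound $|v_k-U|(y)+|v_k^\lda-U^\lda|(y)\le C|y|^{2-n}$, and the master estimate delivers $|\xi_\lda-V_\lda|(y)\le C|y|^{n-6}\cdot|y|^{2-n}=C|y|^{-4}$, completing the proof.
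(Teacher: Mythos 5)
Your proof is correct and follows essentially the same route as the paper's: both start from the integral representation of the difference quotient and control the error by a mean-value perturbation of $(tv_k+(1-t)v_k^\lambda)^{4/(n-2)}$ around $(tU+(1-t)U^\lambda)^{4/(n-2)}$, using the $|y|^{2-n}$ lower bound on all four functions to produce the $|y|^{n-6}$ factor. Your write-up simply makes explicit what the paper labels ``by direct calculus'' and ``obvious'' (the $\min^{p-1}$ bound and the $\mathcal{O}_\lambda$ case), and those details are right.
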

\begin{proof} By Lemma \ref{lem:sigma-k}, we have
\[
a_k(y):=v_k(y)-U(y)=O(\va_k^{\frac{n-2}{2}})\quad\mbox{and}\quad
b_k(y):= v_k^\lda(y)-U^\lda(y) =O(\va_k^{\frac{n-2}{2}})
\]
if $|y|\le \frac{\sigma }{2} |S_k|$.
By  direct calculus, we have
\begin{align*}
&\frac{(U(|y|)+a_k(y))^{\frac{n+2}{n-2}}-(U^\lda(|y|) +b_k(y))^{\frac{n+2}{n-2}} }{(U(|y|)+a_k(y))-(U^\lda(|y|)+b_k(y))}\\&
=\frac{n+2}{n-2} \int_0^1 \Big(t U(|y|)+ (1-t)U^\lda(|y|) \Big)^{\frac{4}{n-2}} \,\ud t + O(1)(|a_k(y)|+|b_k(y)|) |y|^{n-6}\\&
=\frac{1}{n(n-2)} V_\lda +O(1) \va_k^{\frac{n-2}{2}} |y|^{n-6}.
\end{align*}
This proves the first inequality of Lemma \ref{lem:efft-estm}. The second inequality is obvious.

\end{proof}

We now work out the leading order terms as described at the beginning of this subsection. First,
by \eqref{eq:724-b} and Proposition \ref{prop:x-k-estimates}, we have
\be \label{eq:724-c}
\begin{split}
\left|R_{\bar g_k}(y)-|S_k|^{-2} \pa_i \pa_j H_{ij}(|S_k|^{-1}y)\right|&\le C  (|x_k|^{2\tau} |S_k|^{-2} +|x_k|^\tau |S_k|^{3-n} |y|^{n-5} )\\&
\le C  M_k^{-1} ( |S_k|^{-2} +|S_k|^{\frac{4-n}{2}} |y|^{n-5} ) .
\end{split}
\ee

It follows that for  $\lda\le |y|\le \frac{\sigma }{2} |S_k|$,  $\lda\in [1/2,2]$,
\begin{align}
&R_{\bar g_k}(y) U^\lda(y)- (\frac{\lda}{|y|})^{n+2} R_{\bar g_k}(y^\lda) U(y^\lda) \nonumber \\
&= |S_k|^{-2}( \pa_i\pa_j  H_{ij} (|S_k|^{-1} y) U^\lda(y)-(\frac{\lda}{|y|})^{n+2}    \pa_i\pa_j  H_{ij}(|S_k|^{-1} y^{\lda}) U(y^\lda))\nonumber  \\& \quad +O(1)M_k^{-1} ( |S_k|^{-2}|y|^{2-n} +|S_k|^{\frac{4-n}{2}} |y|^{-3} ).\nonumber \\&
= \lda ^{n-2} \sum_{l=4}^{n-4} |S_k|^{-l} |y|^{l-n} (1-(\frac{\lda}{|y|})^{2l}) \pa_i\pa_j  H_{ij}^{(l)}(\frac{y}{|y|}) U(y^\lda)+O(1)M_k^{-1} |S_k|^{-\frac{3}{2}}|y|^{-3}  .
\label{eq:724-d}
\end{align}
By \eqref{ck-2},  for $ |y|\ge \frac{\sigma }{2} |S_k|$
\begin{align*}
 &R_{\bar g_k}(y) U^\lda(y)- (\frac{\lda}{|y|})^{n+2} R_{\bar g_k}(y^\lda) U(y^\lda)\\&
 \le C(M_k^{-1}|y|^{\tau-n} +  l_k^{-4}|x_k|^{\tau-4} |y|^{-4-n} ) \le C M_k^{-1}|y|^{-\frac{n+2}{2}} .
\end{align*}
Let
\[
T_k(y) =\begin{cases}
 \lda ^{n-2} \sum_{l=4}^{n-4} |S_k|^{-l} |y|^{l-n} (1-(\frac{\lda}{|y|})^{2l}) \pa_i\pa_j  H_{ij}^{(l)}(\frac{y}{|y|}) U(y^\lda),& \quad \lda\le  |y|<\frac{\sigma}{2} |S_k|,\\[2mm]
 \lda ^{n-2} \sum_{l=4}^{n-4} (\frac{\sigma}{2})^l |y|^{-n} (1-(\frac{\lda}{|y|})^{2l}) \pa_i\pa_j  H_{ij}^{(l)}(\frac{y}{|y|}) U(y^\lda),& \quad |y|\ge \frac{\sigma}{2} |S_k|.
\end{cases}
\]
Hence,
\be \label{eq:724-h}
\begin{split}
&R_{\bar g_k}(y) U^\lda(y)- (\frac{\lda}{|y|})^{n+2} R_{\bar g_k}(y^\lda) U(y^\lda)\\&
 = T_k(y)  + O(1)\begin{cases}
M_k^{-1} |S_k|^{-\frac{3}{2}}|y|^{-3}, & \quad \lda\le  |y|<\frac{\sigma}{2} |S_k|,\\[2mm]
M_k^{-1}|y|^{-\frac{n+2}{2}} , & \quad   |y|\ge \frac{\sigma}{2} |S_k|.
 \end{cases}
\end{split}
\ee

Since
\[
\int_{|y|=1}   \pa_i\pa_j  H_{ij}^{(l)}(y) =\int_{|y|=1}  y_a \pa_i\pa_j  H_{ij}^{(l)}(y)=0, \quad a=1,\dots, n,
\]
we have
\[
 \pa_i\pa_j H_{ij}^{(l)}(\frac{y}{|y|})= \sum_{ s=2}^{ l-2} Y_{l,s}(\frac{y}{|y|}),
\]
where $Y_{l,s} $ are some spherical harmonics of degree $s$ on $\mathbb{S}^{n-1}$,
orthogonal to each other in $L^{2}(\mathbb S^{n-1})$.
Let
\[
T_k^{(l)}(r) =\begin{cases}
 \lda ^{n-2} |S_k|^{-l} r^{l-n} (1-(\frac{\lda}{r})^{2l})  U(\frac{\lda^2}{r}),& \quad \lda\le  r<\frac{\sigma}{2} |S_k|,\\[2mm]
 \lda ^{n-2}  (\frac{\sigma}{2})^l |r|^{-n} (1-(\frac{\lda}{r})^{2l}) U(\frac{\lda^2}{r}) ,& \quad r\ge \frac{\sigma}{2} |S_k|.
\end{cases}
\]
By Proposition 6.1 of \cite{LZ2}, there exists a small constant  $\delta_1>0$ depending only on $n$,  such that, for every  $\lambda\in [1-\delta_1, 1+\delta_1]$, the boundary value problem
$$f_{l,s}''(r)+\frac{n-1}rf_{l,s}'(r)+(V_{\lambda}(r)-\frac{s(s+n-2)}{r^2})f_{l,s} =-c(n)T_k^{(l)}(r) ,\quad \lambda <r<Q_1 l_k^{1/2} $$
with
\[
f_{l,s} (\lambda)=f_{l,s} (Q_1 l_k^{1/2})=0,
\]
has a unique solution. Moreover,
\[
|f_{l,s}(r)|\le \begin{cases}
C|S_k|^{-4}(1+r)^{6-n},&\quad \lambda \le r\le |S_k|, \\
Cr^{2-n},& \quad |S_k|\le r\le Q_1 l_k^{1/2}.
\end{cases}
\]
We remark that the restriction on $\lda \in [1-\delta_1, 1+\delta_1]$ is
to ensure that $V_\lda(|y|) $ is sufficiently close to $V_1(r)=n(n+2)U(r)^{\frac{4}{n-2}}$.
Let
\[
f_{\lambda}^{(3)}(y)=\sum_{l=4}^{n-4} \sum_{ s=2}^{l-2} f_{l,s} (|y|) Y_{l,s} (\frac{y}{|y|}).
\]
Then
\be \label{f3sol}
\begin{cases}
\Delta f_\lda^{(3)}(y) +V_\lda(|y|) f_\lda^{(3)}(y) = -c(n) T_k(y)  \quad &\mbox{in }B_{Q_1 l_k^{1/2}} \setminus B_\lda,  \\
f_\lda^{(3)}(y) =0  \quad &\mbox{on }\pa (B_{Q_1 l_k^{1/2} } \setminus B_\lda).\\
\end{cases}
\ee
Using \eqref{eq:724-a}, we have
\[
 \max_{|e|=1}|\pa_i\pa_j  H_{ij}^{(l)}(\frac{y}{|y|})|\le C |x_k|^\tau \quad \text{and} \quad
 | Y_{l,s}(\frac{y}{|y|})| \le C |x_k|^\tau,
\]
which leads to the following crucial estimate for $f_\lda^{(3)}(y)$:
\be \label{eq:724-e}
|\nabla^m f_\lda^{(3)}(y)| \le
\begin{cases}
C |S_k|^{-4} |x_k|^{\tau} (1+|y|)^{6-n-m} ,& \quad |y|<\frac{\sigma}{2} |S_k|,\\[2mm]
C |x_k|^{\tau} (1+|y|)^{2-n-m} ,& \quad |y|\ge \frac{\sigma}{2} |S_k|,
\end{cases}
\ee
for $m=0, 1, \cdots, n+2$.  Next we have
\begin{align*}
&|(\bar b_i(y)\partial_i +\bar d_{ij}(y)\partial_{ij}-\bar c(y))f^{(3)}_{\lambda} (y)|
\le C\begin{cases}
 |S_k|^{-6} |x_k|^{2\tau} |y|^{6-n} ,\quad |y|<\frac{\sigma}{2}  |S_k|,\\[2mm]
  M_k^{-1} |x_k|^{\tau}|y|^{\tau -n} ,\quad |y|>\frac{\sigma}{2}   |S_k|.
\end{cases}
\end{align*}
By Lemma \ref{lem:efft-estm},
\begin{align*}
 |(V_\lda(y)-\xi_\lda(y))f^{(3)}_{\lambda}(y)| \le C \begin{cases}
 |S_k|^{-\frac{n+6}{2}} |x_k|^{\tau}  ,\quad |y|<\frac{\sigma}{2}    |S_k|,\\[2mm]
   |x_k|^{\tau} |y|^{-2-n}  ,\quad |y|>\frac{\sigma}{2}   |S_k|, ~y\in  \mathcal{O}_\lda.
\end{cases}
\end{align*}
Since $|x_k|\le C l_k^{-1/2}$ and $|S_k|= l_k |x_k|$, it follows that
\be  \label{eq:724-i}
\begin{split}
&\left[ \Delta +\bar b_i(y)\partial_i +\bar d_{ij}(y)\partial_{ij}-\bar c(y)\right]f^{(3)}_{\lambda}(y)
+\xi_\lda(y) f^{(3)}_{\lambda}(y) \\&
= (\Delta +V_\lda(y)) f^{(3)}_{\lambda}(y) +|(\bar b_i(y)\partial_i +\bar d_{ij}(y)\partial_{ij}-\bar c(y))f^{(3)}_{\lambda}(y) |
+ |V_\lda(y)-\xi_\lda(y)| |f^{(3)}_{\lambda}(y)| \\&
= (\Delta +V_\lda(y)) f^{(3)}_{\lambda}(y) +O(1) \begin{cases}
M_k^{-1} |S_k|^{-4} ,\quad |y|<\frac{\sigma}{2}    |S_k|,\\[2mm]
 M_k^{-1}  |y|^{-\frac{n+6}{2}}  ,\quad |y|>\frac{\sigma}{2}   |S_k|, ~y\in  \mathcal{O}_\lda.
\end{cases}
\end{split}
\ee

Let
\[
\hat E_\lda(y)= \left[\Delta +\bar b_i(y)\partial_i +\bar d_{ij}(y)\partial_{ij}-\bar c(y)+\xi_\lda(y)\right] \big(w_\lda+ f^{(3)}_{\lambda} \big)(y), \quad |y|\ge \lda,
\]
where $w_\lda(y)=v_k(y)-v_k^{\lda}(y)$. Recall that
$\left[ \Delta +\bar b_i(y)\partial_i +\bar d_{ij}(y)\partial_{ij}-\bar c(y)+\xi_\lda(y)\right] w_\lda(y) =E_\lda(y)$.
We have
\be \label{prop:error-est-2}
|\hat E_\lda (y)|\le C _2 \begin{cases}
M_k^{-1} |S_k|^{-1}|y|^{-3} ,\quad \lda< |y|<\frac{\sigma}{2}    |S_k|,\\[2mm]
 M_k^{-1}  |y|^{-\frac{n+2}{2}}  ,\quad |y|>\frac{\sigma}{2}   |S_k|, ~y\in  \mathcal{O}_\lda.
\end{cases}
\ee

\begin{proof}[Proof of \eqref{prop:error-est-2}]
 By Lemma \ref{lem:sigma-k}, we have
\be \label{eq:724-g}
|\nabla^m(v_k-U)(y)|\le C|S_k|^{\frac{2-n}{2}} \quad \mbox{for }|y|<\frac{\sigma}{2} |S_k|.
\ee
By \eqref{eq:724-h} and \eqref{ck-2}, we have
\begin{align*}
&\bar c(y)v_k^{\lambda}(y)- (\frac{\lambda}{|y|})^{n+2}
\bar c(y^{\lambda})v_k(y^{\lambda}) \\
&= \bar c(y)U^{\lambda}(y)- (\frac{\lambda}{|y|})^{n+2}
\bar c(y^{\lambda})U(y^{\lambda}) + \bar c(y)(v_k^{\lambda}(y)-U^\lda(y))- (\frac{\lambda}{|y|})^{n+2}
\bar c(y^{\lambda})(v_k(y^{\lambda}) - U(y^\lda)) \\&
= (\Delta +V_\lda) f^{(3)}_{\lambda}(y)  +O(1) \begin{cases}M_k^{-1}( |S_k|^{-\frac{3}{2}}|y|^{-3}+|S_k|^{-4}|y|^{4-n}), & \quad \lda\le  |y|<\frac{\sigma}{2} |S_k|,\\[2mm]
M_k^{-1}|y|^{-\frac{n+2}{2}} , & \quad   |y|\ge \frac{\sigma}{2} |S_k|.
\end{cases}
\end{align*}
From the proof of Proposition \ref{e-e-lam}, and using $\sigma_k \le C|S_k|^{-\frac{n-2}{2}} $, which
is based on Lemma \ref{lem:sigma-k},  we know that
\begin{align*}
|\bar b_j(y)\partial_j v_k^{\lambda}(y)+\bar d_{ij}(y)\partial_{ij}v^{\lambda}_k(y)|&
\le C \sigma_k |x_k|^{\tau-2} l_k^{-2} |y|^{-n}\\&
\le C |S_k|^{-\frac{n-2}{2}}  M_k^{-1} |S_k|^{\frac{n-6}{2}} |y|^{-n}= C M_k^{-1} |S_k|^{-2} |y|^{-n}
\end{align*}
when $\lda \le |y|\le \frac{\sigma}{2} |S_k|$. By \eqref{bik-2} and \eqref{dijk-2}, we have
\[
|\bar b_j(y)\partial_j v_k^{\lambda}(y)+\bar d_{ij}(y)\partial_{ij}v^{\lambda}_k(y)| \le CM_k^{-1}|y|^{-\frac{n+2}{2}}
\]
when $|y|\ge \frac{\sigma}{2} |S_k|$.
Similarly,  we have
\begin{align*}
&(\frac{\lambda}{|y|})^{n+2}\left(\bar b_i(y^{\lambda})
\partial_i v_k(y^{\lambda})+
\bar d_{ij}(y^{\lambda})\partial_{ij}v_k(y^{\lambda})\right) \\&
\le C \begin{cases}M_k^{-1} |S_k|^{-2} |y|^{-2-n} , & \quad \lda\le  |y|<\frac{\sigma}{2} |S_k|,\\[2mm]
M_k^{-1}|y|^{-\frac{n+6}{2}} , & \quad   |y|\ge \frac{\sigma}{2} |S_k|.
\end{cases}
\end{align*}
Making use of \eqref{eq:724-i}, we complete the proof.

\end{proof}

Let $f_{\lda}^{(4)}(r)$  be the radial solution of
\be \label{eq:barrier-ode724}
-\Delta  f_{\lda}^{(4)} =-\frac{\ud^2}{\ud r^2 }f_{\lda}^{(4)} -\frac{n-1}{r} \frac{\ud}{\ud r } f_{\lda}^{(4)}
= Q M_k^{-1}|S_k|^{-1} r^{-3}, \quad r>\lda,
\ee
\[
f_{\lda}^{(4)}(\lda)=\frac{\ud}{\ud r }   f_{\lda}^{(4)}(\lda)=0,
\]
where $Q>C_2+2$ is a constant to be fixed. In fact,
\[
f_{\lda}^{(4)}(r)= Q M_k^{-1}|S_k|^{-1} \lda^{-1} \left (\frac{1}{n-3}\frac{\lda}{r} -\frac{1}{(n-2)(n-3)} (\frac{\lda}{r})^{n-2} -\frac{1}{n-2}\right)<0.
\]
\begin{lem} \label{lem:f4} By taking a large $Q$ independent of $k$, we have
\[
\left[\Delta +\bar b_i(y)\partial_i +\bar d_{ij}(y)\partial_{ij}-\bar c(y)+\xi_\lda(y)\right] f_{\lda}^{(4)}(|y|)\le - (C_2+1)  M_k^{-1}|S_k|^{-1}|y|^{-3}
\]
for $\lda \le |y|\le Q_1 l_k^{1/2}$.
\end{lem}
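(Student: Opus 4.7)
\textbf{Plan for Lemma \ref{lem:f4}.} Since $f_\lda^{(4)}$ is the radial solution of \eqref{eq:barrier-ode724}, the plan is to use $\Delta f_\lda^{(4)}(|y|) = -Q M_k^{-1}|S_k|^{-1}|y|^{-3}$ as the dominant negative contribution, to exploit that $\xi_\lda(y) f_\lda^{(4)}(|y|) \le 0$ (because $\xi_\lda \ge 0$ by \eqref{eq:xi-def} and $f_\lda^{(4)} < 0$), and to show that the remaining pieces $(\bar b_i \partial_i + \bar d_{ij}\partial_{ij} - \bar c) f_\lda^{(4)}$ are $o(1) M_k^{-1}|S_k|^{-1}|y|^{-3}$ as $k \to \infty$, uniformly for $\lda \le |y| \le Q_1 l_k^{1/2}$. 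Choosing $Q \ge 2(C_2+2)$ then yields the stated inequality. From the explicit formula for $f_\lda^{(4)}$ I record the pointwise bounds
$$|f_\lda^{(4)}(r)| \le C M_k^{-1}|S_k|^{-1}, \quad |(f_\lda^{(4)})'(r)| \le C M_k^{-1}|S_k|^{-1} r^{-2}, \quad |(f_\lda^{(4)})''(r)| \le C M_k^{-1}|S_k|^{-1} r^{-3},$$
valid for $r \ge \lda \ge 1-\delta_1$, which are straightforward.

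\textbf{Splitting into two regions.} For $\lda \le |y| \le \sigma|S_k|$ the coordinate $y$ is conformal normal for $\bar g_k$, so by the identity \eqref{rrr} applied to the radial $f_\lda^{(4)}$ we have $(\bar b_i\partial_i + \bar d_{ij}\partial_{ij})f_\lda^{(4)}(|y|) = 0$. Using the first branch of \eqref{ck-2} together with Remark \ref{rem:quantities}(iii),
$$|\bar c(y) f_\lda^{(4)}(|y|)| \le C M_k^{-2}|S_k|^{(n-12)/2}|y|^2,$$
which is $o(1) M_k^{-1}|S_k|^{-1}|y|^{-3}$ exactly when $M_k^{-1}|S_k|^{(n-10)/2}|y|^5 = o(1)$; substituting $|y|^5 \le C|S_k|^5 \le Cl_k^{5/2}$ (by Proposition \ref{prop:x-k-estimates}) and $M_k = l_k^{(n-2)/2}$ reduces this to $l_k^{(4-n)/4} \to 0$, which holds since $n \ge 7$. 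For $\sigma|S_k| \le |y| \le Q_1 l_k^{1/2}$ I use the second branches of \eqref{bik-2}--\eqref{ck-2} to obtain
$$|\bar b_i \partial_i f_\lda^{(4)}| + |\bar d_{ij}\partial_{ij}f_\lda^{(4)}| + |\bar c \, f_\lda^{(4)}| \le C M_k^{-2}|S_k|^{-1}\bigl(|y|^{\tau-3} + |y|^{\tau-2}\bigr);$$
this is $o(1) M_k^{-1}|S_k|^{-1}|y|^{-3}$ provided $M_k^{-1}|y|^{\tau+1} = o(1)$, and with $|y| \le Q_1 l_k^{1/2}$ and $\tau = (n-2)/2$ this reduces to $l_k^{(4-n)/4} \to 0$, again valid for $n \ge 7$. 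Combining the two regions,
$$[\Delta + \bar b_i \partial_i + \bar d_{ij}\partial_{ij} - \bar c + \xi_\lda] f_\lda^{(4)}(|y|) \le \bigl(-Q + o(1)\bigr) M_k^{-1}|S_k|^{-1}|y|^{-3},$$
and the conclusion of the lemma follows by taking $Q \ge 2(C_2+2)$ and $k$ large.

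\textbf{Main obstacle.} Once the explicit solution formula is in hand the computation itself is rather routine; the substantive design choice was made earlier in choosing the forcing $r^{-3}$ in \eqref{eq:barrier-ode724}, precisely matched to the leading error $M_k^{-1}|S_k|^{-1}|y|^{-3}$ of $\hat E_\lda$ in the inner region recorded in \eqref{prop:error-est-2}. The most delicate step in the verification above is the invocation of the conformal-normal identity \eqref{rrr} to annihilate the first- and second-order coefficient terms of $L_{\bar g_k}$ inside $B_{\sigma|S_k|}$; without this cancellation the sizes $|\bar b_i|, |\bar d_{ij}| = O(l_k^{-2}|x_k|^{\tau-2}|y|^{*})$ on the first branches of \eqref{bik-2}--\eqref{dijk-2} would be too large to close the estimate once $n \ge 7$, which is exactly why the construction had to be separated from the $3\le n \le 6$ case.
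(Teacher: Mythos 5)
Your proposal is correct and follows essentially the same route as the paper's proof: split at $|y|=\sigma|S_k|$, use the conformal-normal identity \eqref{rrr} to reduce the operator to $\Delta-\bar c$ on the radial $f_\lda^{(4)}$ in the inner region, bound the remaining terms via \eqref{bik-2}--\eqref{ck-2} and the explicit decay of $f_\lda^{(4)}$ in the outer region, and absorb everything using $\xi_\lda\ge 0$, $f_\lda^{(4)}<0$ and a large $Q$. Your quantitative bookkeeping (both regions reducing to $l_k^{(4-n)/4}\to 0$) matches the paper's estimates, and is in fact slightly more explicit than the paper's displayed inequalities.
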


\begin{proof}
If $\lda \le |y|\le \sigma |S_k|$, by \eqref{ck-2} we have
\begin{align*}
&\left[ \Delta +\bar b_i(y)\partial_i +\bar d_{ij}(y)\partial_{ij}-\bar c(y) \right] f_{\lda}^{(4)}(|y|) \\
&= \Delta f_{\lda}^{(4)}(|y|)  -\bar c f_{\lda}^{(4)}(|y|) \\&
\le -  Q M_k^{-1}|S_k|^{-1}|y|^{-3} +C (|S_k|^{-4} |x_k^{\tau } ||y|^2) M_k^{-1}|S_k|^{-1}|y|^{-3}\\&
\le -(Q-1) M_k^{-1}|S_k|^{-1}|y|^{-3}.
\end{align*}

If $\sigma |S_k| \le |y|\le Q_1 l_k^{1/2}$, by \eqref{bik-2}, \eqref{dijk-2} and \eqref{ck-2}  we have
\begin{align*}
&\left[ \Delta +\bar b_i\partial_i +\bar d_{ij}\partial_{ij}-\bar c \right] f_{\lda}^{(4)}(|y|)  \\&
\le -  Q M_k^{-1}|S_k|^{-1}|y|^{-3} +C(M_k^{-1} |y|^{\tau}) M_k^{-1}|S_k|^{-1}|y|^{-3}\\&
\le -(Q-1) M_k^{-1}|S_k|^{-1}|y|^{-3}.
\end{align*}
 Since $\xi_\lda >0$ and $f_{\lda}^{(4)}(|y|)<0$, and $Q>2C_2+2$, the lemma follows immediately.

\end{proof}

\begin{prop} \label{cor:barrier-final}  Let $f_\lda(y)= f_{\lda}^{(3)}(y)+ f_{\lda}^{(4)}(|y|) $,
 where $\lda\in [1-\delta_1,1+\delta_1]$ and $\delta_1\in (0,1/2)$ is determined to make
 it possible to construct $f_{\lda}^{(3)}(y)$ from \eqref{f3sol}. Then
\[
\left[ \Delta +\bar b_i(y)\partial_i +\bar d_{ij}(y)\partial_{ij}-\bar c(y)+\xi_\lda(y) \right] (w_\lda+ f_{\lda})(y)\le 0
\]
for $y\in \tilde \Sigma^k:=\mathcal{O}_\lda \cup \{y: \lda\le |y|\le \frac{1}{2} \sigma |S_k|\}$,
  \[
f_\lda (y) =0 \quad \mbox{on }\pa B_\lda, \quad |f_\lda(y)| +|\nabla f_\lda(y)|=o(1)|y|^{2-n}
\text{  for } y\in  \tilde \Sigma^k_\lda,
\]
In particular, $|f_\lda(y)|=o(1) M_k^{-1} \quad \mbox{on } \pa B_{Q_1 l_k^{1/2}}$.
\end{prop}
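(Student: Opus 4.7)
The plan is to combine the already-established residual estimate \eqref{prop:error-est-2} for $\hat E_\lda$ with the radial barrier estimate of Lemma \ref{lem:f4} to show that $w_\lda+f_\lda$ is a supersolution of the linear operator, and then to read off the boundary vanishing and the pointwise bounds directly from the explicit constructions of $f_\lda^{(3)}$ and $f_\lda^{(4)}$.

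\textbf{Supersolution inequality.} By the definition of $\hat E_\lda$, applying $\mathcal L := \Delta+\bar b_i\pa_i+\bar d_{ij}\pa_{ij}-\bar c+\xi_\lda$ to $w_\lda+f_\lda=w_\lda+f_\lda^{(3)}+f_\lda^{(4)}$ yields $\hat E_\lda(y)+\mathcal L f_\lda^{(4)}(|y|)$. I split $\tilde\Sigma^k_\lda$ into the inner slab $\{\lda\le |y|\le \sigma|S_k|/2\}$ and the outer piece $\mathcal O_\lda\cap\{|y|>\sigma|S_k|/2\}$. On the inner slab, \eqref{prop:error-est-2} gives $|\hat E_\lda(y)|\le C_2 M_k^{-1}|S_k|^{-1}|y|^{-3}$ and Lemma \ref{lem:f4} gives $\mathcal L f_\lda^{(4)}(|y|)\le -(C_2+1)M_k^{-1}|S_k|^{-1}|y|^{-3}$, so the sum is $\le -M_k^{-1}|S_k|^{-1}|y|^{-3}\le 0$. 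On the outer piece, \eqref{prop:error-est-2} gives $|\hat E_\lda(y)|\le C_2 M_k^{-1}|y|^{-(n+2)/2}$; this is absorbed by the same barrier provided $|S_k|^{-1}|y|^{-3}\gtrsim |y|^{-(n+2)/2}$, equivalently $|y|^{(n-4)/2}\gtrsim |S_k|$. Since $|y|\ge \sigma|S_k|/2$ and $n\ge 7$ forces $(n-6)/2\ge 1/2>0$, this holds for all sufficiently large $k$ because $|S_k|\to\infty$. Hence $\mathcal L(w_\lda+f_\lda)\le 0$ throughout $\tilde\Sigma^k_\lda$.

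\textbf{Boundary condition and size estimates.} Since $f_\lda^{(3)}=0$ on $\pa B_\lda$ by \eqref{f3sol} and $f_\lda^{(4)}(\lda)=0$ by \eqref{eq:barrier-ode724}, we get $f_\lda=0$ on $\pa B_\lda$. From \eqref{eq:724-e} with $m=0,1$, inside $\{|y|<\sigma|S_k|/2\}$ the ratio $(|f_\lda^{(3)}|+|\nabla f_\lda^{(3)}|)/|y|^{2-n}$ is bounded by $C|S_k|^{-4}|x_k|^\tau |y|^4\le C|x_k|^\tau \to 0$ using $|y|\le \sigma|S_k|/2$, and in the outer region by $C|x_k|^\tau|y|^{-1}\le C|x_k|^\tau|S_k|^{-1}\to 0$. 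The explicit formula after \eqref{eq:barrier-ode724} gives $|f_\lda^{(4)}(r)|+|\pa_r f_\lda^{(4)}(r)|\le CM_k^{-1}|S_k|^{-1}$ on $\lda\le r\le Q_1 l_k^{1/2}$, so the ratio with $|y|^{2-n}$ is $\le CM_k^{-1}|S_k|^{-1}|y|^{n-2}\le CQ_1^{n-2}|S_k|^{-1}=o(1)$ using $M_k=l_k^{(n-2)/2}$ and $|y|\le Q_1 l_k^{1/2}$. This yields both $|f_\lda(y)|+|\nabla f_\lda(y)|=o(1)|y|^{2-n}$ on $\tilde\Sigma^k_\lda$ and, on $\pa B_{Q_1 l_k^{1/2}}$, the bound $|f_\lda(y)|=o(1)M_k^{-1}$.

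\textbf{Main obstacle.} Essentially all the technical work has been done upstream: the construction of $f_\lda^{(3)}$ relied on the spectral analysis of $V_\lda$ and the spherical harmonic decomposition of the leading scalar-curvature term $T_k(y)$ from \eqref{eq:724-h}, forcing the restriction $\lda\in[1-\delta_1,1+\delta_1]$; and the residue bound \eqref{prop:error-est-2} hinged on the refined blow-up expansion of Lemma \ref{lem:sigma-k} and on \eqref{eq:724-b}. Given these, the present proposition is a matter of organizing the pieces: the radial barrier $f_\lda^{(4)}$ is designed exactly to dominate the $M_k^{-1}|S_k|^{-1}|y|^{-3}$ residue on the inner slab, and the compatibility in the outer region is precisely where the dimension restriction $n\ge 7$ is consumed.
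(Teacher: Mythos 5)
Your proof is correct and takes essentially the same approach as the paper's (the paper's proof of this proposition is itself quite terse, simply citing \eqref{prop:error-est-2}, Lemma \ref{lem:f4}, \eqref{eq:724-e}, and the explicit formula for $f_\lda^{(4)}$). You fill in a detail the paper leaves implicit: the verification that in the outer region $\mathcal{O}_\lda\cap\{|y|>\sigma|S_k|/2\}$ the residue $C_2 M_k^{-1}|y|^{-(n+2)/2}$ is indeed dominated by the barrier's output $(C_2+1)M_k^{-1}|S_k|^{-1}|y|^{-3}$, which reduces to $|S_k|^{(n-6)/2}\to\infty$ and thus genuinely consumes the hypothesis $n\ge 7$.
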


\begin{proof}
The differential inequality follows from  \eqref{prop:error-est-2} and Lemma \ref{lem:f4}. The boundary
condition $f_\lda (y) =0 \quad \mbox{on }\pa B_\lda$ follows from the construction of $f_\lda(y)$.
To obtain the estimate for $|f_\lda(y)| +|\nabla f_\lda(y)|$, we apply \eqref{eq:724-e} to obtain
 $|f_\lda^{(3)}(y)|+|\nabla f_\lda^{(3)}(y)| \le C |x_k|^{\tau} (1+|y|)^{2-n}$. Furthermore,  the estimate for $|f_\lda^{(4)}(y)|$
 implies that $|f_\lda^{(4)}(y)|+|\nabla f_\lda^{(4)}(y)|=O(|S_k|^{-1})M_k^{-1}=O(|S_k|^{-1})l_k^{-\frac{n-2}{2}}$, but
 for $\lda\le |y|\le Q_1 l_k^{1/2}$, we certainly have $l_k^{-\frac{n-2}{2}}=O(|y|^{2-n})$. Since
 $|x_k|, |S_k|^{-1}\to 0$ as $k\to \infty$,  we thus conclude that
 $|f_\lda^{(4)}(y)|+|\nabla f_\lda^{(4)}(y)|=o(1) |y|^{2-n}$ for $\lda\le |y|\le Q_1 l_k^{1/2}$. Finally, on
 $\pa B_{Q_1 l_k^{1/2}}$, $|y|^{2-n}=Q_1^{2-n} l_k^{-\frac{n-2}{2}}$, we see that
 $|f_\lda(y)|=o(1) M_k^{-1}$.
\end{proof}

\section{The lower bound and removability of the singularity}
\label{sec:lb}

Suppose that $u$ is a solution of \eqref{eq:main-isolated} with $g$ satisfying \eqref{eq:inner-flat}.
If $n\ge 25$, we assume further that \eqref{eq:25-wk-bd-1} holds.
It follows from Theorem \ref{main-ub} that
\be \label{eq:upper-bound-7h}
u(x)\le C|x|^{-\frac{n-2}{2}} \quad \mbox{for } x\in B_1,
\ee
where $C>0$ is independent of $x$.

\begin{lem} \label{lem:sphere-harnack} Suppose that $u$ is a solution of \eqref{eq:main-isolated} with $g$ satisfying \eqref{eq:inner-flat}. If $n\ge 25$, we assume further that \eqref{eq:25-wk-bd-1} holds.   Then
\[
\max_{r/2\le |x|\le 2r } u(x)\le C_3 \min _{r/2\le |x|\le 2r } u(x)
\]
and
\[
|\nabla u(x)| +|x||\nabla^2 u(x)| \le C_3|x|^{-1} u(x)
\]
for every $0<|x|=r<1/4$, where $C_3$ is independent of $r$.

\end{lem}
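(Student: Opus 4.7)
The plan is to reduce both estimates to standard interior regularity on a fixed annulus via the natural conformal rescaling. For fixed $0<r<1/4$, set
\[
v(y):=r^{\frac{n-2}{2}} u(ry),\qquad (g_r)_{ij}(y):=g_{ij}(ry),\qquad y\in A:=\{1/4\le |y|\le 4\}.
\]
The change of variables $x=ry$ together with the conformal covariance of the conformal Laplacian shows that
\[
-L_{g_r} v = n(n-2)\,v^{\frac{n+2}{n-2}} \quad \text{in } A.
\]
The flatness hypothesis \eqref{eq:inner-flat} yields, for $0\le m\le n+2$,
\[
\bigl|\nabla_y^m\bigl((g_r)_{ij}-\delta_{ij}\bigr)(y)\bigr|=r^m \bigl|\nabla_x^m g_{ij}(ry)\bigr|\le C\, r^{\tau}|y|^{\tau-m},
\]
which is uniformly bounded on $A$ (in fact $g_r\to \delta$ in $C^{n+2}(A)$ as $r\to 0$). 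Hence $L_{g_r}$ is uniformly elliptic with smooth coefficients bounded independently of $r$. The already established upper bound \eqref{eq:upper-bound-7h} gives $0<v(y)\le C|y|^{-(n-2)/2}\le C'$ on $A$, so $v$ is a positive, uniformly bounded classical solution of a uniformly elliptic semilinear equation with uniformly bounded coefficients on $A$.

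I would then apply Moser's Harnack inequality, viewing the equation as $-L_{g_r} v = h(y)\,v$ with $h:=n(n-2)\,v^{4/(n-2)}$ uniformly bounded, on a finite chain of overlapping balls that cover the connected annulus $\{1/2\le |y|\le 2\}$, to obtain
\[
\max_{1/2\le |y|\le 2} v \le C_3 \min_{1/2\le |y|\le 2} v
\]
with $C_3$ independent of $r$. Unscaling via $u(x)=r^{-(n-2)/2}\,v(x/r)$ yields the first assertion of the lemma. For the pointwise derivative bound, I would fix $y_0$ with $1/2\le |y_0|\le 2$, so that $B_{1/4}(y_0)\subset A$, apply interior Schauder (or equivalently $W^{2,p}$ plus Sobolev embedding) estimates to the equation for $v$ on $B_{1/4}(y_0)$, and then apply Harnack on the same ball, obtaining
\[
|\nabla v(y_0)|+|\nabla^2 v(y_0)| \le C\|v\|_{L^\infty(B_{1/4}(y_0))} \le C\, v(y_0).
\]
Using $\nabla_y v=r^{n/2}(\nabla_x u)\circ(r\,\cdot)$, $\nabla_y^2 v=r^{(n+2)/2}(\nabla_x^2 u)\circ(r\,\cdot)$, and $u(ry_0)=r^{-(n-2)/2}\,v(y_0)$, this rescales to
\[
r|\nabla u(ry_0)|+r^2|\nabla^2 u(ry_0)|\le C\, u(ry_0),
\]
which, combined with $|x|=r|y_0|\asymp r$, is exactly the claimed estimate at $x=ry_0$.

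No serious obstacle arises; the only point that requires care is the verification that the full flatness hypothesis \eqref{eq:inner-flat}, together with $\tau=(n-2)/2>0$, makes the family $\{g_r\}_{0<r<1/4}$ uniformly controlled in $C^{n+2}(A)$, so that the ellipticity and coefficient bounds governing the Harnack and Schauder constants can be chosen independently of $r$. Once this uniformity is in hand, both the Harnack chaining for the first assertion and the interior regularity step for the derivative bound are entirely standard.
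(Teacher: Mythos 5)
Your proof is correct and follows essentially the same route as the paper: rescale $u$ to $v_r(y)=r^{(n-2)/2}u(ry)$ on a fixed annulus, note from \eqref{eq:inner-flat} that the rescaled metric has uniformly controlled $C^{n+2}$ norm (indeed $g_r\to\delta$), invoke the already-established upper bound \eqref{eq:upper-bound-7h} to bound $v_r$ uniformly, and then apply Harnack and interior elliptic estimates before scaling back. The paper's proof is a compressed version of exactly this argument.
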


\begin{proof}  For any $\bar  x\in B_{1/4}\setminus \{0\}$, let $r=|\bar x|$ and
\[
v_r(y)= r^{\frac{n-2}{2}} u(r y).
\]
By \eqref{eq:main-isolated}, we have
\[
-L_{\bar g}v_r=n(n-2)v_r(y)^{\frac{n+2}{n-2}}=0 \quad \mbox{ in } B_{1/r}\setminus \{0\},
\]
where $\bar g_{ij}(y)=g_{ij}(ry)$.  Since $g$ satisfies \eqref{eq:inner-flat}, we have
$|\nabla ^l(\bar g_{ij}(y)-\delta_{ij})| \le r^{\tau}$ for  $\frac14 \le |y|\le 4$, $l=0,\dots, n+2$.
By \eqref{eq:upper-bound-7h}, $v_r(y)\le C$ for $y\in B_{4}\setminus \bar B_{1/4}$.
Applying the standard local estimates and Harnack inequality to  $v_r$
in the annulus $B_{4}\setminus \bar B_{1/4}$ and scaling back to $u$, the lemma follows immediately.

\end{proof}

Recall that the Pohozaev identity for $u$ is
\begin{equation}\label{p-u1}
P(r,u)-P(s,u)=-\int_{B_r\setminus B_s}(\frac{n-2}2u+x\cdot \nabla u)(L_g-\Delta)u\,\ud x,
\end{equation}
 where $0<s\le r<1$,
$$P(r,u)=\int_{\partial B_r}\left(\frac{n-2}2u\frac{\partial u}{\partial r}-\frac 12 r|\nabla u|^2
+r|\frac{\partial u}{\partial r}|^2+\frac{(n-2)^2}2ru^{\frac{2n}{n-2}}\right)\,\ud S_r,$$
and  $\ud S_r$ is the standard area measure on $\pa B_r$.
By Lemma \ref{lem:sphere-harnack} and the flatness condition \eqref{eq:inner-flat} on $g$,  we have
$$|(\frac{n-2}2 u+x\cdot \nabla u)(L_g-\Delta )u|\le C|x|^{\tau-n}. $$
As in previous sections, we take $\tau =\frac{n-2}{2}$.
It follows that  $|P(r,u)-P(s,u)|\le Cr^\tau$ for any $0<s<r$ and hence  the limit
\be  \label{eq:pohozaev-lim}
\lim_{r\to 0}P(r,u)=:P(u)
\ee
exists.
\begin{lem} \label{lem:poho-crit} Assume as in Lemma \ref{lem:sphere-harnack}. Then we have
\[
P(u) \le 0.
\]
Moreover, $P(u)=0$ if and only if $\liminf_{x\to 0} |x|^{\frac{n-2}{2}}u(x)=0$.

\end{lem}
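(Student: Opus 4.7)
The plan is to argue via a blow-down: rescale $u$ and the metric by a factor $t\to 0^{+}$, pass to a limit in $\mathbb R^{n}\setminus\{0\}$, and identify it via the Caffarelli--Gidas--Spruck classification. The key is a scale invariance of the Pohozaev boundary integrand.

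First I would set $u_{t}(y):=t^{(n-2)/2}u(ty)$ and $g_{t}(y):=g(ty)$. A direct change of variables shows that $P(t,u)=P(1,u_{t})$ for every $t\in(0,1)$. Moreover,
\begin{equation*}
-L_{g_{t}}u_{t}=n(n-2)u_{t}^{(n+2)/(n-2)}\quad\text{on }B_{1/t}\setminus\{0\},
\end{equation*}
and \eqref{eq:inner-flat} yields $g_{t}\to\delta$ in $C^{n+2}_{\mathrm{loc}}(\mathbb R^{n})$ as $t\to 0^{+}$. Consequently, whenever $u_{t_{k}}\to u_{\infty}$ in $C^{2}_{\mathrm{loc}}(\mathbb R^{n}\setminus\{0\})$ along some $t_{k}\to 0$, the limit satisfies $-\Delta u_{\infty}=n(n-2)u_{\infty}^{(n+2)/(n-2)}$ on $\mathbb R^{n}\setminus\{0\}$, and $P(u)=\lim_{k} P(1,u_{t_{k}})=P(1,u_{\infty})$ (the limit $P(u)$ being independent of the subsequence, as recorded immediately before the lemma).

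I would then split into two cases. If $\liminf_{x\to 0}|x|^{(n-2)/2}u(x)=0$, choose $x_{k}\to 0$ with $|x_{k}|^{(n-2)/2}u(x_{k})\to 0$ and set $r_{k}:=|x_{k}|$. Lemma~\ref{lem:sphere-harnack} together with standard elliptic regularity force $u_{r_{k}}\to 0$ in $C^{2}$ on every compact subset of $\mathbb R^{n}\setminus\{0\}$, so $P(u)=\lim_{k} P(1,u_{r_{k}})=0$. Conversely, if $\liminf_{x\to 0}|x|^{(n-2)/2}u(x)>0$, then $u(x)\ge c|x|^{-(n-2)/2}$ near $0$ for some $c>0$; combined with the upper bound \eqref{eq:upper-bound-7h} and Lemma~\ref{lem:sphere-harnack}, this yields, for any $t_{k}\to 0$, uniform two-sided bounds
\begin{equation*}
C^{-1}|y|^{-(n-2)/2}\le u_{t_{k}}(y)\le C|y|^{-(n-2)/2}\quad\text{on every compact }K\subset\mathbb R^{n}\setminus\{0\}.
\end{equation*}
Elliptic regularity then extracts a subsequential $C^{2}_{\mathrm{loc}}$ limit $u_{\infty}>0$, which by the Caffarelli--Gidas--Spruck theorem~\cite{CGS} must be a Fowler solution $u_{\infty}(y)=|y|^{-(n-2)/2}\psi(\ln|y|)$.

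Plugging a radial Fowler profile into the Pohozaev integrand at $r=1$ gives, after a short computation,
\begin{equation*}
P(1,u_{\infty})=\omega_{n-1}\Bigl[\tfrac12(\psi')^{2}-\tfrac{(n-2)^{2}}{8}\psi^{2}+\tfrac{(n-2)^{2}}{2}\psi^{2n/(n-2)}\Bigr],
\end{equation*}
which is the conserved Hamiltonian of the Fowler ODE (as seen by multiplying the ODE $\psi''-\tfrac{(n-2)^{2}}{4}\psi+n(n-2)\psi^{(n+2)/(n-2)}=0$ by $\psi'$ and integrating). A standard phase-plane inspection shows this Hamiltonian is strictly negative on every positive bounded orbit (its value at the equilibrium is already negative, and the separatrix value $0$ is only attained by the bubble, not by any periodic Fowler profile), so $P(u)<0$ in this case. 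The two cases together yield $P(u)\le 0$ with equality iff the liminf vanishes. The main technical point is the compactness in the second case: one needs uniform two-sided decay of $u_{t_{k}}$ on compact subsets of $\mathbb R^{n}\setminus\{0\}$ in order to pass to a nontrivial Fowler limit; this is exactly what Theorem~\ref{main-ub} and the sphere Harnack inequality supply.
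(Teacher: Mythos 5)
Your proof follows the same blow-down strategy as the paper: rescale $u$ and $g$, use Lemma~\ref{lem:sphere-harnack} and elliptic estimates to extract a $C^2_{\mathrm{loc}}(\mathbb{R}^n\setminus\{0\})$ limit solving the flat equation, and observe that $P(u)$ is the Pohozaev integral of this limit at $r=1$. Where you diverge is in the endgame for the case $\liminf_{x\to 0}|x|^{\frac{n-2}{2}}u(x)>0$: the paper at this point simply invokes the Caffarelli--Gidas--Spruck result that the Pohozaev invariant of a positive solution on a punctured ball is constant, nonpositive, and vanishes exactly when the singularity is removable; since the blow-down limit $v$ inherits a lower bound $v(y)\geq C^{-1}|y|^{-\frac{n-2}{2}}$ it is singular at the origin, forcing $P(1,v)<0$. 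You instead use a different part of \cite{CGS} (the radial classification of entire singular solutions), identify $P(1,u_\infty)$ with the conserved Hamiltonian of the Fowler ODE, and argue from the phase portrait that this Hamiltonian is strictly negative on every positive bounded orbit other than the homoclinic bubble. Both routes are correct; the paper's is shorter because it uses CGS's Pohozaev result as a black box, whereas yours is more self-contained at the cost of a small computation and a phase-plane argument, and implicitly requires that the blow-down limit is bounded (so that $\psi$ is a bounded orbit), which you should note follows from the upper bound \eqref{eq:upper-bound-7h}. Your treatment of the case $\liminf=0$ matches the paper's exactly.
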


\begin{proof} Let $\{r_k\}$ be any sequence of positive numbers satisfying $\lim_{k\to \infty} r_k=0$, and let
\[
v_{k}(y)= r_k^{\frac{n-2}{2}} u(r_k y).
\]
By the proof of Lemma \ref{lem:sphere-harnack}, we see that, up to passing to a subsequence,
\[
v_k \to v \quad \mbox{in }C_{loc}^2 (\R^n\setminus \{0\}),
\]
where
\[
-\Delta v= n(n-2) v^{\frac{n+2}{n-2}} \quad \mbox{in }\R^n\setminus \{0\}, \quad v\ge 0.
\]
It follows that
\[
P(1, v)= \lim_{k\to \infty} P(1, v_k)= \lim_{k\to \infty} P(r_k, u)= P(u).
\]
By \cite{CGS}, $P(r,v)$ is a nonpositive constant independent of $r$ and $P(1,v)=0$ if and only if $v$ is smooth cross $\{0\}$. Clearly, if $P(u) =0$ then $\liminf_{x\to 0} |x|^{\frac{n-2}{2}}u(x) =0$; otherwise $v_k (y)\ge \frac{1}{C}|y|^{-\frac{n-2}{2}}$ for some $C>0$ independent of $k$ and thus $v$ is singular at $0$.

On the other hand, if $\liminf_{x\to 0} |x|^{\frac{n-2}{2}}u(x) =0$, by Lemma \ref{lem:sphere-harnack} we can find $r_k\to 0$ such that $v_k$ defined as above has trivial limit $v\equiv 0$. Hence $P(1,v)=0=P(u)$.

\end{proof}

Let $\bar u(r)= \dashint_{\partial B_r}u\,\ud S_r$ be the average of $u$ on $\partial B_r$.  Let $t=-\ln r$ and $ \bar u(r)=e^{\frac{n-2}{2} t} w(t) $. By a direct computation,
\[
\bar u_r= -e^{\frac{n}{2} t}\left(\frac{n-2}{2} w+ w_t\right),
\]
\[
\bar u_{rr}=e^{\frac{n+2}{2} t}\left(\frac{n(n-2)}{4} w+(n-1) w_t +w_{tt}\right).
\]
Therefore, we have
\[
\bar u_{rr}+\frac{n-1}{r} \bar u_r = e^{\frac{n+2}{2} t} \left(w_{tt}-(\frac{n-2}{2})^2 w\right).
\]
By Lemma \ref{lem:sphere-harnack} and  the flatness condition \eqref{eq:inner-flat} on $g$, we have
\begin{equation}\label{key-i}
-c_1w^{\frac{n+2}{n-2}}-c_3e^{-\tau t}w\le w_{tt}-(\frac{n-2}2)^2w\le -c_2w^{\frac{n+2}{n-2}}+c_3e^{-\tau t}w.
\end{equation}

\begin{prop} \label{prop:liminf-lim}Assume as in Lemma \ref{lem:sphere-harnack}. If
$\liminf_{x\to 0} |x|^{\frac{n-2}{2}}u(x) =0$, then
\be \label{eq:lim-0}
\lim_{x\to 0} |x|^{\frac{n-2}{2}}u(x) =0
\ee
and $0$ is a removable singularity.
\end{prop}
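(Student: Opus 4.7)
My plan is to combine the vanishing of the Pohozaev invariant with a blow-down argument and an ODE analysis of the spherical average. Under the hypothesis, Lemma~\ref{lem:poho-crit} gives $P(u)=0$, and the spherical Harnack estimate of Lemma~\ref{lem:sphere-harnack} makes the hypothesis equivalent to $\liminf_{t\to\infty}w(t)=0$, where $w(t):=r^{(n-2)/2}\bar u(r)$ with $t=-\ln r$. The key task is therefore to show $\lim_{t\to\infty}w(t)=0$; once this is in hand, a final application of Harnack yields \eqref{eq:lim-0}, and removability will follow by a standard bootstrap.

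First I would show that every subsequential blow-down limit is either zero or a regular CGS bubble. Given any sequence $r_k\to 0$, set $v_k(y):=r_k^{(n-2)/2}u(r_ky)$; by the upper bound \eqref{eq:upper-bound-7h} and interior elliptic regularity, the family $\{v_k\}$ is precompact in $C^2_{\mathrm{loc}}(\mathbb R^n\setminus\{0\})$, and any limit $v$ is a non-negative solution of $-\Delta v=n(n-2)v^{(n+2)/(n-2)}$ on $\mathbb R^n\setminus\{0\}$ with $P(1,v)=P(u)=0$ by continuity of the Pohozaev integral under $C^2_{\mathrm{loc}}$ convergence (as in the proof of Lemma~\ref{lem:poho-crit}). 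Since Fowler solutions carry a strictly negative Pohozaev invariant, the Caffarelli--Gidas--Spruck classification forces $v$ to be either $\equiv 0$ or a regular bubble. Choosing in particular $r_k=e^{-t_k}$ with $w(t_k)\to 0$ gives $\bar v_k(1)=w(t_k)\to 0$, so $v$ is a non-negative superharmonic function attaining $0$ on $\partial B_1$; hence $v\equiv 0$ by the strong maximum principle. Thus along any sequence on which $w$ approaches zero, the blow-down is trivial.

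The main obstacle is to upgrade this to $w(t)\to 0$ along every sequence, i.e., to exclude oscillation of $w$ between small values and a positive accumulation. I plan to accomplish this using the refined ODE estimates prepared in Section~\ref{sec:ode}. A direct Pohozaev calculation (of the kind that leads to \eqref{key-i}) identifies, modulo lower-order errors controlled by Harnack and the flatness bound \eqref{eq:inner-flat}, the Pohozaev quantity $P(e^{-t},u)$ with $\omega_{n-1}\mathcal H(t)$, where
\[
\mathcal H(t):=\tfrac12 (w'(t))^2-\tfrac12\Big(\tfrac{n-2}{2}\Big)^{2}w(t)^2+\tfrac{(n-2)^2}{2}w(t)^{2n/(n-2)}
\]
is the Hamiltonian of the limiting Fowler ODE $w''-(\tfrac{n-2}{2})^2 w+n(n-2)w^{(n+2)/(n-2)}=0$. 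Since $P(u)=0$, this forces $\mathcal H(t)\to 0$, and evaluated at any local extremum of $w$ a non-vanishing accumulation value must equal $L_\ast:=(1/4)^{(n-2)/4}$, the height of the homoclinic orbit of the phase portrait. Any such oscillatory scenario would force $w$, on each hump, to shadow the homoclinic profile to leading order---a rigidity that is incompatible with an interleaved sequence $t_k$ along which $w(t_k)\to 0$. Quantifying this shadowing through the ODE inequality \eqref{key-i} and the refined estimates of Section~\ref{sec:ode} produces the required contradiction, yielding $\lim_{t\to\infty}w(t)=0$. Once \eqref{eq:lim-0} is established, $u^{(n+2)/(n-2)}$ and $|R_g|u$ lie in $L^p_{\mathrm{loc}}(B_{1/2})$ for some $p>n/2$, and a standard comparison with a multiple of $|x|^{2-n}$ (using the coercivity \eqref{eq:positive-first-eigen}) extends $u$ across $0$ as a classical positive solution.
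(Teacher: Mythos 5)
Your outline has the right ingredients — $P(u)=0$ from Lemma~\ref{lem:poho-crit}, blow-down along $w(t_k)\to 0$ to the trivial solution, and the ODE machinery of Section~\ref{sec:ode} — but the central contradiction step is left at the heuristic level, and two of the identifications you lean on are not actually available as stated. First, the asserted identification of $P(e^{-t},u)$ with the one-dimensional Hamiltonian $\mathcal H(t)$ ``modulo lower-order errors'' is not true for general $t$: the Pohozaev integrand involves $v_t^2-|\nabla_\theta v|^2$ with the full angular variable, and to replace $v(t,\cdot)$ by its average $w(t)$ with a relative error $o(1)w(t)$ you need the asymptotic symmetry estimate $|\nabla v(t,\theta)|=o(1)w(t)$. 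In the paper this is established only along the special sequence $t_i$ of minima with $w(t_i)\to 0$, because there the rescaled $u(r_iy)/u(r_ie_1)$ converges to a harmonic function $\tfrac12|y|^{2-n}+\tfrac12$; at local maxima of $w$, where $w$ stays bounded away from zero, no such relative estimate is available, so your conclusion that any accumulation value of $w$ at extrema equals the height of the homoclinic orbit is not justified. Second, the ``shadowing the homoclinic profile'' argument is not made quantitative: what actually closes the contradiction in the paper is entirely different and purely elementary. One fixes $\bar t_i<t_i<t_i^*$ with $w(\bar t_i)=w(t_i^*)=\va_0$ and $w(t_i)\to 0$, bounds $|P(r_i,u)|$ by the bulk Pohozaev integral over $B_{r_i}$ (split across $r_i^*$), and uses the upper bound in \eqref{t-large} to control $w$ on $[t_i,t_i^*]$ and hence the bulk integral, arriving at $w(t_i)^2\lesssim e^{-\tau t_i^*}$. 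Combining this with the lower bounds in \eqref{t-large}--\eqref{bad-t} and the combinatorial inequality $t_i^*-t_i\le t_i-\bar t_i+C$ forces $\bar t_i\le C$, contradicting $\bar t_i\to\infty$. Your proposal does not carry out anything equivalent to this chain.

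There is also a concrete gap at the end. The claim that \eqref{eq:lim-0} alone places $u^{(n+2)/(n-2)}$ in $L^p_{\mathrm{loc}}$ for some $p>n/2$ fails for $n\ge 4$: with only $u(x)=o(1)|x|^{-(n-2)/2}$ one gets $u^{(n+2)/(n-2)}=o(1)|x|^{-(n+2)/2}$, which is only in $L^p$ for $p<2n/(n+2)<n/2$. You need the intermediate improvement $w(t)\le Ce^{-(\frac{n-2}{2}-\delta)t}$, which the paper extracts from \eqref{key-i}: once $w$ is small it is convex and eventually decreasing, and integrating $w_{tt}\ge(\frac{n-2}{2}-\delta)^2w$ gives exponential decay, hence $u(x)\le C(\delta)|x|^{-\delta}$ and removability follows by standard elliptic estimates. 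Without this step, the removability conclusion is not proved.
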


\begin{proof}  Our proof will largely follow the approach initiated by Chen-Lin \cite{ChenLin3}, but we need to prove
some necessary bounds in our context, as given later in Lemma \ref{lem:bessel} and Lemma \ref{lem:bessel-nonlinear}.
We argue by contradiction.
Since $\liminf_{x\to 0} |x|^{\frac{n-2}{2}}u(x) =0$, by Lemma \ref{lem:sphere-harnack}, if
the conclusion of the Proposition does not hold, we would have  $\limsup_{x\to 0}u(x)|x|^{\frac{n-2}2}>0$, so
\[
\limsup_{t\to \infty} w(t)>\liminf_{t\to \infty} w(t)=0.
\]
Making use of \eqref{key-i}, $w$ is convex ($w''>0$) when  $w$ small and $t$ is large. It follows that there exist
\[
\bar t_i<t_i<t_i^*
 \quad \mbox{with}\quad  \lim_{i\to \infty } \bar t_i= \infty,
 \] such that
 \[
 w(\bar t_i)=w(t_i^*)=\epsilon_0, \quad \lim_{i\to \infty} w(t_i)= 0
 \]
\[
t_i \mbox{ is the unique minimum point of } w \mbox{ in } (\bar t_i,t_i^*),
\] where $\epsilon_0>0$ is a small constant. Hence, $w$ is decreasing in $(\bar t_i,t_i)$ and increasing in $( t_i,t_i^*)$.

Using \eqref{key-i}, we will prove the following estimates
\begin{equation}\label{t-large}
\frac 2{n-2}\ln\frac{w(t)}{w(t_i)}-C\le t-t_i\le \Big(\frac{2}{n-2}
+Ce^{ -\tau t_i}\Big)\ln \frac{w(t)}{w(t_i)}+C,\quad  t_i\le t\le t_i^*,
\end{equation}
and
 \begin{equation}\label{bad-t}
\frac 2{n-2}\ln \frac{w(t)}{w(t_i)}-C\le t_i-t\le \Big(\frac{2}{n-2} \Big)\ln \frac{w(t)}{w(t_i)}+C',\quad \bar t_i\le t\le t_i
\end{equation}
where $C, C'>0$ is independent of $i$.

Marques obtained a cruder version of these estimates in \cite{f-mar},
replacing $\frac 2{n-2}$ on the left inequality in \eqref{t-large} by $\frac 2{n-2}- c e^{-\tau t_i}$
with $\tau=2$ and  some $c>0$, and similarly,
replacing $\frac 2{n-2}$ on the left inequality in \eqref{bad-t} by $\frac 2{n-2}- c e^{-\tau \bar t_i}$, and
$\frac 2{n-2}$ on the right inequality in \eqref{bad-t} by
$\frac 2{n-2}+ c e^{-\tau \bar t_i}$. His proof was based on that of Chen-Lin in \cite{ChenLin3},
which  estimates the $e^{-\tau t}$ factor in  \eqref{key-i} by its value at the left
end of the interval and estimates the resulting differential inequality as one with  constant
coefficients. His estimates
are adequate to handle his cases of $3\le n \le 5$, but are not adequate for our cases.
We will provide our proof for \eqref{t-large} and \eqref{bad-t} in the next section. The key is to
tackle the $e^{-\tau t}w$ term directly. The behavior exhibited
 is somewhat analogous to the behavior as in \cite{Lev}---see also
Theorem 8.1 (and Problems 29--31) in Chapter 3 of \cite{CL}, but our proof
does not rely on the method for treating linear systems as in \cite{Lev},
instead relies on some comparison principles.

We denote $r=|x|$,
\[
\bar r_i= e^{-\bar t_i}, \quad r_i=e^{-t_i},\quad r_i^*= e^{-t_i^*}.
\]
Thus $\bar r_i >r_i>r_i^*$.

First, we can compute $P(r, u)$  in terms of
$v(t,\theta) :=e^{-\frac{n-2}{2} t} u(e^{-t}\theta)$ as
\[
P(r, u)=\frac{|\mathbb{S}^{n-1}|}{2} \int_{ \mathbb{S}^{n-1}} \left[ v_t^2(t,\theta)-|\nabla_{\theta} v(t, \theta)|^2
-(n-2)^2\left(\frac{v(t,\theta)^2}{4}-v(t,\theta)^{\frac{2 n}{n-2}} \right) \right] d\theta,
\]
and using the Harnack and gradient estimates on $u(x)$ as given by Lemma  \ref{lem:sphere-harnack}
we see that, in terms of $v(t,\theta)$, we have
\[
|\nabla v(t, \theta)| =O(1) w(t),
\]
uniformly for $\theta\in \mathbb S^{n-1}$, so it follows that
\[
\int_{ \mathbb{S}^{n-1}} \left[ v_t^2(t_i,\theta)-|\nabla_{\theta} v(t_i, \theta)|^2\right]\, d\theta \to 0 \quad
\text{ as $i\to \infty$,}
\]
and
\begin{equation}
\label{eq:poho-0}
P(u)=\lim _{i \to \infty} P(r_{i}, u)=0.
\end{equation}

Next we  claim a more precise estimate for $|\nabla v(t_i, \theta)|$ at
$t=t_i$, which implies an equivalent estimate for $u(x)$ on $|x|=r_i$.
\begin{equation} \label{eq:asy-sym}
|\nabla v(t_i, \theta)|=o(1) w(t_i)\quad\text{uniformly for $\theta\in \mathbb S^{n-1}$.}
\end{equation}
Indeed, let $\zeta_i(y)=\frac{u(r_iy)}{u(r_i e_1)}$, where $e_1=(1,0,\dots, 0)$.  We have
$$
-L_{g_i}\zeta_i=n(n-2) \left(r_i^{\frac{n-2}2}u(r_i e_1)\right)^{\frac{4}{n-2}} \zeta_i^{\frac{n+2}{n-2}} \quad
 \mbox{in }B_{1/r_i}\setminus \{0\},
$$
where $ (g_i)_{kl}=g_{kl}(r_iy)$.  By Lemma \ref{lem:sphere-harnack},   $\zeta_i$ is locally uniformly bounded in $\R^n \setminus \{0\}$. By the choice of $r_i$, $r_i^{\frac{n-2}2}u(r_i e_1)\to 0$ as $i\to \infty$. Hence, $\zeta_i \to \zeta$ in $C^2_{loc}(\R^n \setminus \{0\}) $ for some $\zeta$ satisfying
\[
-\Delta \zeta=0 \quad \mbox{in }\R^n \setminus \{0\}, \quad \zeta\ge 0,
\]
$\zeta(e_1)=1$ and $\partial_r\left( \int_{ \mathbb{S}^{n-1}} \zeta(r\theta)r^{\frac{n-2}2}\, d\theta\right)=0$ at $r=1$,
which is based on the choice of $r_i$ in the definition for $\zeta_i(y)$.
 By the B\^ocher theorem and the two normalizing conditions above,
$\zeta(y)=a|y|^{2-n}+b$ with  $a=b=\frac 12$.   In terms of $v(t,\theta)$, this
is implying that $v(t_i+\tau, \theta)/v(t_i, e_1)$ converges to $\cosh(\tau)$ uniformly
on any compact interval of $\tau \times \mathbb S^{n-1}$, and its derivatives converge to
the respective ones of $\cosh(\tau)$. This in particular implies that
$\nabla_{\theta} v(t_i+\tau, \theta)/v(t_i, e_1)\to 0$ uniformly
at $\{\tau\}\times \mathbb S^{n-1}$ at any $\tau$, and $\nabla_{t} v(t_i, \theta)/v(t_i, e_1)\to 0$
uniformly for $\theta\in  \mathbb S^{n-1}$. Hence, \eqref{eq:asy-sym} follows.

Making use of \eqref{eq:asy-sym}, we have
$|v(t_i,\theta)-w(t_i)|=o(1)w(t_i)$ and $|\nabla_{t, \theta} v(t_i, \theta)|=o(1) w(t_i)$
uniformly for $\theta\in  \mathbb S^{n-1}$, so
\begin{align*}
P(r_{i}, u)=&|\mathbb{S}^{n-1}|\Big[
-\frac{1}{2}\left(\frac{n-2}{2}\right)^{2} w^{2}\left(t_{i}\right)(1+o(1)) \\ & +\frac{(n-2)^{2}}{2} w^{\frac{2 n}{n-2}}\left(t_{i} \right) (1+o(1)) \Big].
\end{align*}
Hence for sufficiently large $i$
\begin{equation}\label{temp-new-1}
w^{2}(t_{i}) \leq c_{n}|P(r_i,u )|.
\end{equation}

It follows from the Pohozaev identity (\ref{p-u1}) and \eqref{eq:poho-0} that
\begin{equation*}
\begin{split}
|P(r_i,u)|&\leq \int_{B_{r_{i}} \setminus  B_{r_{i}^{*}}}|\mathcal{A}(u)| \ud x +\int_{B_{r_{i}^{*}}}|\mathcal{A}(u)| \ud x \\&
=: I_1+I_2,
\end{split}
\end{equation*}
where
\[
\mathcal{A}(u)=\left(x^{k} \partial_{k} u+\frac{n-2}{2} u\right) (L_{g}u-\Delta u).
\]
By Lemma \ref{lem:sphere-harnack}, we have
\[
|\mathcal{A}(u)| \le C |x|^{\tau-n}.
\]
Hence,
\[
I_2 \le C(r_i^*)^{\tau} = C e^{-\tau t_i^*}.
\]
By the first inequality in (\ref{t-large}), we have
\[
w(t)\le C w(t_i) \exp \left( \Big(\frac{n-2}{2}\Big)(t-t_i)\right), \quad t_i\le t\le t_i^*,
\]
which implies
\[
u(x) \le C w(t_i) e^{-\tau t_i} |x|^{2-n}\quad \mbox{for } r_i^*\le |x|\le r_i.
\]
By Lemma \ref{lem:sphere-harnack}, we also have
\begin{align*}
|\mathcal{A}(u)(x)|& \le C  u(x) ^2|x|^{\tau-2}.
\end{align*}
Hence,
$$
I_1 \le C w(t_i)^2 e^{-(n-2) t_i}  \int_{r_i^*\le |x|\le r_i }  |x|^{2+\tau-2n }  \,\ud x
\le c w(t_i)^2e^{(2-n)t_i}(r_i^*)^{1-\frac n2}.
$$
By (\ref{t-large}) and (\ref{bad-t}), we see that
\[
t_i^*-t_i \le \Big(\frac{2}{n-2} +Ce^{ -\tau t_i}\Big)  \ln \frac{\va_0}{ w(t_i)} +C,  \quad
\frac{2}{n-2} \ln \frac{\va_0}{ w(t_i)}
\le  t_i-\bar t_i  + C.
\]
Hence,
\be \label{eq:t*}
t_i^*-t_i \le \Big(\frac{2}{n-2} +Ce^{ -\tau t_i}\Big) \Big(  t_i-\bar t_i  + C\Big)\frac{n-2}{2}+C
\le t_i-\bar t_i +C'',
\ee
for some constant $C''>0$ independent of $i$, where we have used
$e^{ -\tau t_i}  ( t_i-\bar t_i)\le e^{ -\tau t_i}  t_i$ is bounded above independent of $i$.
Using (\ref{eq:t*}) we can estimate $I_1$ more precisely:
\begin{align*}
I_1 &\le C w(t_i)^2 e^{-(n-2) t_i}  (r_i^*)^{1-\frac{n}2}\\&
= C w(t_i)^2 e^{-(n-2) t_i+(\frac{n-2}2)t_i^*}  \\&
\le C w(t_i)^2 e^{-\tau \bar t_i}.
\end{align*}
 Combining the estimates of $I_1$ and $I_2$, we have,
\be \label{temp-new-2}
|P(r_i,u)|\leq C w(t_i)^2  e^{-\tau \bar t_i}   +C  e^{-\tau t_i^*}.
\ee

Using \eqref{temp-new-1} and \eqref{temp-new-2}, we can combine terms to obtain $w(t_i)^2 \le  C e^{-\tau t_i^*}$, which is
\be \label{eq:vanishing}
\log \frac{1}{w(t_i)}\ge \frac{n-2}4t_i^*-C.
\ee
From the first inequality of \eqref{bad-t} and the first inequality of \eqref{t-large},  we have
\[
t_i-\bar t_i \ge \frac 2{n-2} \ln \frac{\va_0}{ w(t_i)}-C
\]
and
\[
t_i^*- t_i \ge \frac 2{n-2} \ln \frac{\va_0}{ w(t_i)}-C.
\]
Adding them up and using \eqref{eq:vanishing} and \eqref{eq:t*},  we have
\[
t_i^*- \bar t_i  \ge \frac{4}{n-2} \ln  \frac{1}{w(t_i)} -C\ge t_i^*-C,
\]
which implies $\bar t_i \le C$.
This contradicts $\bar t_i \to \infty$. Therefore, \eqref{eq:lim-0} holds.

Based on \eqref{eq:lim-0}  we clearly have $w'(t)<0$ for $t>T_1,$ where $T_i$ is sufficiently large.
Equation (\ref{key-i}) now implies
$$w_{tt}-(\frac{n-2}2-\delta)^2w\ge 0\quad \mbox{for } t\ge T_1, $$
where $\delta>0$ is some small constant. Thus for $t\ge T_1$,
$w_t^2-(\frac{n-2}2-\delta)^2w^2$ is non-increasing, non-negative due to $w(t)\to 0$ as
$t\to \infty$, and the integration of this quantity leads to
$$w(t)\le w(T_1)\exp(-(\frac{n-2}2-\delta)(t-T_1)),\quad t>T_1, $$
whose equivalent form is
$$u(x)\le C(\delta)|x|^{-\delta}. $$
Then standard elliptic estimate immediately implies that $u$ has a removable singularity at the origin.

Therefore, we complete the proof of Proposition \ref{prop:liminf-lim}.
\end{proof}

\begin{proof}[Proof of Theorem \ref{thm:pre-final}] It follows from Lemma \ref{lem:poho-crit} and Proposition \ref{prop:liminf-lim}.

\end{proof}

\begin{proof}[Proof of Theorem \ref{thm:pre-final-1}] By Lemma \ref{lem:sphere-harnack}, we have
\begin{align*}
-u^{-\frac{n+2}{n-2}} \Delta u&= -u^{-\frac{n+2}{n-2}} L_g u -u^{-\frac{n+2}{n-2}} (\Delta- L_g) u\\&
=n(n-2) +O(|x|^{\frac{n+2}{2}} \cdot |x|^{\tau-2-\frac{n-2}{2}})\\&
=n(n-2) +O(|x|^\tau) \quad \mbox{as }x\to 0.
\end{align*}
Since $0$ is not a removable singularity, \eqref{eq:up-low} holds.
Then the theorem follows immediately from Theorem 1 of Taliaferro-Zhang \cite{talia}---one step in
its proof, (2.25), relies on a statement from \cite{CGS},
for which one can also appeal to Theorem 3 of Han-Li-Teixeira \cite{HLT}.
\end{proof}

\begin{proof}[Proof of Theorem \ref{thm:main} and Theorem \ref{thm:main-25}] By the discussion
in Section \ref{sec:2},  the study of asymptotical behavior at the infinity of solution of the
Yamabe equation \eqref{main-eq} with asymptotically flat metric  $g$ satisfying \eqref{eq:asym-flt}
is equivalent  to the study of isolated singularity of solutions of  \eqref{eq:main-isolated}
with $g$ satisfying \eqref{eq:inner-flat}, after taking a Kelvin transform. Hence,
Theorem \ref{thm:main} and Theorem \ref{thm:main-25} follow from Theorem \ref{thm:pre-final} and Theorem \ref{thm:pre-final-1}.

\end{proof}

\section{Details of improved ODE estimates \eqref{t-large} and \eqref{bad-t}}
\label{sec:ode}

First, we recall a comparison principle.

\begin{lem}\label{lem:m-p} Suppose $\eta(t)$ and $b(t)$ are $C^2$ functions defined on $[t_1,t_2]$, and $b(t)\ge 0$. Suppose that
\[
\eta''(t)-b(t) \eta (t)\ge 0
\]

If
\[
\eta(t_1)\ge 0\quad \mbox{and}\quad \eta'(t_1)\ge 0,
\]
then $\eta(t)\ge \eta(t_1) $ and $\eta'(t)\ge 0$ on $[t_1,t_2]$.

If
\[
\eta(t_2)\ge 0\quad \mbox{and}\quad \eta'(t_2)\le 0,
\]
 then $\eta(t)\ge \eta(t_2) $ and $\eta'(t)\le 0$ on $[t_1,t_2]$.

\end{lem}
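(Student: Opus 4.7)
The plan is to prove the first statement via a continuity/connectedness argument at the boundary between $\eta' \ge 0$ and $\eta' < 0$, smoothed over by a small perturbation to handle a possible degenerate case; the second statement will then follow by time reversal.

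For the first statement, the natural attempt is: let $T$ be the largest value so that $\eta' \ge 0$ on $[t_1, T]$. On that interval, $\eta \ge \eta(t_1) \ge 0$ (since $\eta$ is non-decreasing), so $\eta'' \ge b\eta \ge 0$, hence $\eta'$ is non-decreasing, and $\eta'(T) \ge \eta'(t_1) \ge 0$. A quick contradiction is available if $\eta'(T) > 0$ or if $\eta''(T) > 0$, but the degenerate possibility $\eta(T) = \eta'(T) = \eta''(T) = 0$ (which can happen when $\eta(t_1) = 0$) is not immediately excluded by $C^2$ smoothness alone. To sidestep this, I will work with the perturbation
\[
\tilde\eta_\epsilon(t) := \eta(t) + \epsilon\, e^{c(t-t_1)},
\]
where $c > 0$ is chosen so that $c^2 > \sup_{[t_1,t_2]} b$ (finite by continuity of $b$). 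Then $\tilde\eta_\epsilon(t_1) > 0$, $\tilde\eta_\epsilon'(t_1) > 0$, and
\[
\tilde\eta_\epsilon'' - b\,\tilde\eta_\epsilon = (\eta'' - b\eta) + \epsilon(c^2 - b)\,e^{c(t-t_1)} \ge \epsilon\bigl(c^2 - \textstyle\sup_{[t_1,t_2]} b\bigr) > 0
\]
strictly. Were there a first point $s \in (t_1, t_2]$ with $\tilde\eta_\epsilon'(s) = 0$, the function $\tilde\eta_\epsilon$ would still be increasing on $[t_1, s]$, so $\tilde\eta_\epsilon(s) > \tilde\eta_\epsilon(t_1) > 0$; the strict inequality then forces $\tilde\eta_\epsilon''(s) > 0$, hence $\tilde\eta_\epsilon'' > 0$ in a left neighborhood of $s$ by continuity, so $\tilde\eta_\epsilon'(s-\delta) < \tilde\eta_\epsilon'(s) = 0$ for small $\delta > 0$ — contradicting $\tilde\eta_\epsilon' > 0$ on $[t_1, s)$. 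Hence $\tilde\eta_\epsilon' > 0$ throughout $[t_1, t_2]$ and $\tilde\eta_\epsilon$ is strictly increasing there, so $\eta(t) + \epsilon\, e^{c(t-t_1)} > \eta(t_1) + \epsilon$ and $\eta'(t) > -\epsilon c\, e^{c(t-t_1)}$. Letting $\epsilon \to 0^+$ yields $\eta(t) \ge \eta(t_1)$ and $\eta'(t) \ge 0$ on $[t_1, t_2]$.

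For the second statement, I would apply the first to the time-reversed function $\zeta(t) := \eta(t_1 + t_2 - t)$, noting that $\zeta \in C^2([t_1,t_2])$ satisfies $\zeta'' - \tilde b\,\zeta \ge 0$ with $\tilde b(t) := b(t_1 + t_2 - t) \ge 0$, $\zeta(t_1) = \eta(t_2) \ge 0$, and $\zeta'(t_1) = -\eta'(t_2) \ge 0$. The first-statement conclusion $\zeta(t) \ge \zeta(t_1)$, $\zeta'(t) \ge 0$ on $[t_1, t_2]$ translates, via the substitution $s = t_1 + t_2 - t$, to $\eta(s) \ge \eta(t_2)$ and $\eta'(s) \le 0$ on $[t_1, t_2]$.

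The main (and only) obstacle is the degenerate configuration at the candidate maximal $T$ when $\eta(t_1) = \eta'(t_1) = 0$ leaves room for $\eta$ to develop flat behavior; the strict perturbation $\tilde\eta_\epsilon$ resolves it cleanly without requiring finer ODE analysis such as Sturm comparison or spectral estimates.
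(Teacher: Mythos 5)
Your proof is correct. The route differs from the paper's: the paper argues by contradiction on $\eta$ itself, invoking the Hopf boundary-point lemma twice (once at the point $\hat t$ where $\eta$ first drops below $\eta(t_1)$, once at the point $t^*$ defined by an infimum), whereas you regularize $\eta$ to a \emph{strict} supersolution $\tilde\eta_\epsilon = \eta + \epsilon\,e^{c(t-t_1)}$ with $c^2 > \sup b$, run a simple first-zero argument on $\tilde\eta_\epsilon'$, and take $\epsilon \to 0^+$. These are close cousins — the exponential barrier you add is essentially what one constructs inside a proof of the 1D Hopf lemma — but your version is more self-contained (no appeal to Hopf as a black box) and sidesteps cleanly the only genuinely delicate configuration, namely the degenerate case $\eta(t_1) = \eta'(t_1) = 0$ where $\eta$ can be identically zero on an initial subinterval and the sign of $\eta''$ is not forced. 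The paper does not explicitly flag this degeneracy; your perturbation dispenses with it automatically because $\tilde\eta_\epsilon(t_1) > 0$ and $\tilde\eta_\epsilon'(t_1) > 0$ strictly. One small simplification available in your argument: once you have $\tilde\eta_\epsilon(s) \ge \tilde\eta_\epsilon(t_1) > 0$ on $[t_1,s]$, you get $\tilde\eta_\epsilon'' > b\,\tilde\eta_\epsilon \ge 0$ on all of $[t_1,s]$, so $\tilde\eta_\epsilon'$ is non-decreasing there and $\tilde\eta_\epsilon'(s) \ge \tilde\eta_\epsilon'(t_1) > 0$, contradicting $\tilde\eta_\epsilon'(s) = 0$ directly — no need to localize near $s$. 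The time-reversal reduction for the second conclusion is the same device the paper uses.
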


\begin{proof} Suppose that $\eta(t_1)\ge 0$ and $\eta'(t_1)\ge 0$, but $\eta(\bar t) <\eta(t_1)$ for some $\bar t\in (t_1,t_2]$. Then there must be some point $t_1\le \hat t<\bar t$ such that $\eta(\hat t)=\eta(t_1)$ and $\eta (t)<\eta(t_1)$ for  $t\in (\hat t, \bar t)$.
By Hopf lemma, we have $\eta'(\hat t)<0$. Let
\[
t^* =\inf\{t_1\le t<\hat t: \eta(s)>\eta(\hat t)\ge 0, \eta'(s)<0 \quad \mbox{for }t<s<\hat t \}.
\]
Then either $t^*=t_1$ or $\eta'(t^*)=0$. In either case Hopf Lemma would apply at $t^*$ and imply that $\eta'(t)<0$, which would be a contradiction. We conclude that $\eta(t)\ge \eta(t_1)$. Now it follows that  $\eta''(t)\ge b(t) \eta (t)\ge 0$, so $\eta'$ is non-decreasing on $[t_1,t_2]$, which implies $\eta'(t)\ge \eta'(t_1)\ge 0$ on $[t_1,t_2]$.

If $\eta(t_2)\ge 0 $ and $ \eta'(t_2)\le 0$, the proof is similar. Therefore, Lemma \eqref{lem:m-p} is proved.
\end{proof}




The following lemma will be used to prove the left side inequalities in \eqref{t-large} and \eqref{bad-t}.

\begin{lem} \label{lem:bessel} Suppose that  $a,b,t_1,t_2$ and $\tau$ are positive numbers,  and $1<t_1<t_2$.
Suppose that  $\eta$ is a positive $C^2$ function defined on $[t_1,t_2]$ and satisfies
\be \label{eq:bes-1}
\eta''(t) -(a^2+b e^{-\tau t}) \eta(t) \le 0 \quad \mbox{for }t_1\le t\le t_2.
\ee
\begin{itemize}
\item[(i)] If $\eta'(t_2)\ge 0$,  then
\begin{equation}\label{eq:bes-2}
\eta(t)\le \eta(t_2) \frac{\cosh (a(t-t_2)-f(t))}{\cosh (f(t_2))}, \quad  t_1\le t\le t_2,
\end{equation}
where $f(t)=\frac{b}{2a\tau}e^{-\tau t}$. Consequently,
\[
t_2-t\ge \frac{1}{a} \ln \frac{\eta(t)}{\eta(t_2)} -C,
\]
where $C>0$ depends only on $a, b$ and $\tau$.

\item[(ii)] If $\eta'(t_1)\le 0$,  then
\begin{equation}\label{eq:bes-8}
\eta(t)\le \eta(t_1) \frac{\cosh (a(t-t_1)+f(t))}{\cosh (f(t_1))}, \quad  t_1\le t\le t_2,
\end{equation}
and
\[
t-t_1\ge \frac{1}{a} \ln \frac{\eta(t)}{\eta(t_1)} -C,
\]
where $C>0$ depends only on $a, b$ and $\tau$.

\end{itemize}

\end{lem}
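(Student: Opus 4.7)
The plan is to apply the comparison principle (Lemma~\ref{lem:m-p}) to $\zeta:=\phi-\eta$, where $\phi(t)=K\cosh(\mu(t))$ is an explicit barrier calibrated so that $\phi''(t)-(a^2+be^{-\tau t})\phi(t)\ge 0$. The specific choice $f(t)=\frac{b}{2a\tau}e^{-\tau t}$ in the statement is dictated by requiring $(a\pm f'(t))^{2}$ to reproduce $a^2+be^{-\tau t}$ up to a harmless $O(e^{-2\tau t})$ remainder, which is what makes the whole strategy work.

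For part (i), I take $\mu(t):=a(t-t_2)-f(t)$ and $K:=\eta(t_2)/\cosh(f(t_2))$, so that $\phi(t_2)=\eta(t_2)$. Using $f'(t)=-\frac{b}{2a}e^{-\tau t}$ and $f''(t)=\frac{b\tau}{2a}e^{-\tau t}$, one computes $(\mu')^2-(a^2+be^{-\tau t})=\frac{b^2}{4a^2}e^{-2\tau t}\ge 0$, while $\mu$ is increasing on $[t_1,t_2]$ with $\mu(t_2)=-f(t_2)<0$, so $\mu<0$ throughout the interval and $\tanh(\mu)\mu''\ge 0$. The identity $\phi''/\phi=(\mu')^2+\tanh(\mu)\mu''$ then delivers the super-solution inequality. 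Combined with $\zeta(t_2)=0$ and $\zeta'(t_2)\le 0$ (the latter from $\phi'(t_2)<0$, because $\sinh(-f(t_2))<0$ and $\mu'(t_2)>0$, together with $\eta'(t_2)\ge 0$), the second alternative of Lemma~\ref{lem:m-p} applied backward yields $\zeta\ge 0$ on $[t_1,t_2]$, which is exactly \eqref{eq:bes-2}.

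Part (ii) is handled by the symmetric barrier $\mu(t):=a(t-t_1)+f(t)$, $K:=\eta(t_1)/\cosh(f(t_1))$, comparing forward from $t_1$ via the first alternative of Lemma~\ref{lem:m-p}. The main obstacle lies precisely in the analogous super-solution verification: now $\mu(t)\ge f(t_1)>0$ on $[t_1,t_2]$, so $\tanh(\mu)\mu''$ carries the opposite sign to the favourable $\frac{b^2}{4a^2}e^{-2\tau t}$ remainder, and the inequality is much more delicate. I expect to close it either by exploiting the smallness $f(t_1)\le\frac{b}{2a\tau}e^{-\tau}$ forced by $t_1>1$ against the parameters $a,\tau$, or, if that is not enough in the generality stated, by routing the comparison through the exact positive solution of $\phi''=(a^2+be^{-\tau t})\phi$ with $\phi(t_1)=\eta(t_1)$, $\phi'(t_1)=0$ and then bounding that Bessel-type solution by the claimed $\cosh$ via a WKB-style matching---the choice of $f$ is made so that the two asymptotics agree to the required order.

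The logarithmic consequences are then routine: taking logarithms in the pointwise bounds and using $\cosh(x)\le\tfrac12 e^{|x|}$, $\cosh(f(t_{1,2}))\ge 1$, and the uniform estimate $f(t)\le\frac{b}{2a\tau}e^{-\tau}$ valid for $t>1$, all $f$-dependent quantities collapse into a single constant $C=C(a,b,\tau)$, yielding $t_2-t\ge\frac{1}{a}\ln\frac{\eta(t)}{\eta(t_2)}-C$ in part (i) and $t-t_1\ge\frac{1}{a}\ln\frac{\eta(t)}{\eta(t_1)}-C$ in part (ii).
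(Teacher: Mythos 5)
Your treatment of part (i) reproduces the paper's proof essentially verbatim: the barrier $\phi=\eta(t_2)\cosh(a(t-t_2)-f(t))/\cosh(f(t_2))$, the identity $(\mu')^2-(a^2+be^{-\tau t})=\tau^2 f^2\ge 0$ coming from $\mu'=a+\tau f$, the observation that $\mu<0$ on $[t_1,t_2]$ makes $\mu''\sinh\mu=-\tau^2 f\sinh\mu\ge0$, and then the second alternative of Lemma~\ref{lem:m-p}. No issues there.

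For part (ii) you are right to be uneasy, but your diagnosis is off and neither fallback plan closes the gap. With $\mu(t)=a(t-t_1)+f(t)$ and $f'=-\tau f$ one has $\mu'=a-\tau f$ and $\mu''=+\tau^2 f>0$; since $\mu>0$, the term $\tanh(\mu)\mu''$ is in fact the \emph{favourable} one. What goes wrong is that $(\mu')^2-(a^2+be^{-\tau t})=\tau^2 f^2-2be^{-\tau t}$, a negative quantity of size $\sim 2be^{-\tau t}$, while $\tanh(\mu)\mu''\le\tau^2 f=\frac{b\tau}{2a}e^{-\tau t}$ cannot dominate it unless $\tau\gtrsim 4a$. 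So $\phi$ is not a supersolution; the smallness of $f(t_1)$ is irrelevant because the deficit is present at every $t$ once $t_1$ is large; and no WKB reconstruction can help because the pointwise bound \eqref{eq:bes-8} is actually \emph{false} as stated. Indeed, for the extremal case $\eta''=(a^2+be^{-\tau t})\eta$, $\eta(t_1)=1$, $\eta'(t_1)=0$, Duhamel gives
\[
\eta(t)=\cosh(a(t-t_1))+\frac{1}{a}\int_{t_1}^{t}\sinh(a(t-s))\,be^{-\tau s}\eta(s)\,ds>\cosh(a(t-t_1))\quad(t>t_1),
\]
whereas the right-hand side of \eqref{eq:bes-8} equals $\big[\cosh(f(t))+\tanh(a(t-t_1))\sinh(f(t))\big]\cosh(a(t-t_1))/\cosh(f(t_1))$, which drops strictly below $\cosh(a(t-t_1))$ once $t-t_1$ is large (the bracket tends to $1<\cosh(f(t_1))$). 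The paper's assertion that ``the proof of \eqref{eq:bes-8} is the same'' therefore also glosses over a real gap.

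What survives, and is all the paper actually uses, is the logarithmic consequence $t-t_1\ge\frac{1}{a}\ln\frac{\eta(t)}{\eta(t_1)}-C$. This can be proved directly without a $\cosh$ barrier. Since $\eta''-a^2\eta\le be^{-\tau t}\eta$, $\eta'(t_1)\le0$, and $\sinh(a(t-s))\ge0$ for $t_1\le s\le t$, the variation-of-constants representation gives
\[
\eta(t)\le\eta(t_1)\cosh(a(t-t_1))+\frac{b}{a}\int_{t_1}^{t}\sinh(a(t-s))e^{-\tau s}\eta(s)\,ds
\le\bigl(\eta(t_1)+f(t_1)\,M(t)\bigr)e^{a(t-t_1)},
\]
where $M(t)=\max_{[t_1,t]}\eta(s)e^{-a(s-t_1)}$. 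When $f(t_1)<1$ this yields $M(t)\le\eta(t_1)/(1-f(t_1))$ and hence the desired estimate with $C$ depending only on $a,b,\tau$; in the paper's application $t_1\to\infty$, so this restriction is harmless. You should replace the $\cosh$ step in part (ii) with an argument of this type rather than try to salvage the barrier.
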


\begin{proof} We remark that the left hand side of \eqref{eq:bes-1}  can be transformed into a Bessel type equation, after  the change of variable $x=e^{-t}$,
so we can apply Lemma \ref{lem:m-p} to $\eta(t)$
with a solution of this Bessel type equation. But here we give an explicit comparison function.

Let
\[
\zeta(t):= \frac{\eta(t_2)}{ \cosh (f(t_2))}\cosh (a(t-t_2)-f(t)) .
\]
Differentiating $\zeta$ and using $f'(t)=-\tau f(t)$, we find that
\[
\zeta'(t)= \frac{\eta(t_2)}{ \cosh (f(t_2))}(a+\tau f(t))\sinh(a(t-t_2)-f(t)),
\]
\begin{align*}
\zeta''(t)=\frac{\eta(t_2)}{ \cosh (f(t_2))}\left( - \tau^2 f(t) \sinh (a(t-t_2)-f(t)) +(a+\tau f(t))^2\cosh (a(t-t_2)-f(t)) \right).
\end{align*}
Hence,
\begin{align*}
&\zeta''(t) -(a^2+b e^{-\tau t}) \zeta(t) \\&= \frac{\eta(t_2)}{ \cosh (f(t_2))} \Big( ((a+\tau f(t))^2 -(a^2+b e^{-\tau t})  )\cosh (a(t-t_2)-f(t))  \\& \quad  - \tau^2 f(t) \sinh (a(t-t_2)-f(t)) \Big)\\&
\ge \frac{\eta(t_2)}{ \cosh (f(t_2))}  (2a\tau \frac{b}{2a\tau}-b )e^{-\tau t} \cosh (a(t-t_2)-f(t))=0,
\end{align*}
where we have used $\sinh (a(t-t_2)-f(t)) \le 0$ if $t\le t_2$. Moreover,
\[
\zeta(t_2)=\eta(t_2) \quad \mbox{and}\quad \zeta'(t_2)\le 0.
\]
By Lemma \ref{lem:m-p}, we have $\eta(t)\le \zeta(t)$ for $t\in [t_1,t_2]$. Therefore, \eqref{eq:bes-2} is proved.

The proof of \eqref{eq:bes-8} is the same. Therefore, Lemma \ref{lem:bessel} is proved.
\end{proof}

\begin{proof}[Proof of lower bounds in \eqref{t-large} and \eqref{bad-t}]
For the lower bound in  \eqref{t-large}, we apply (ii) of Lemma \ref{lem:bessel}, identifying
$t_1=t_i$ and $t_2=t_i^*$. For the lower bound in \eqref{bad-t}, we apply (i) of Lemma \ref{lem:bessel}, identifying
$t_1=\bar t_i$ and $t_2=t_i$.
\end{proof}


\begin{lem}\label{lem:bessel-nonlinear} Suppose that  $b, c, n, t_1,t_2$ and $\tau$ are positive numbers, $t^*\le t_1<t_2$ and $n>2$.
 Suppose that  $w$ is a positive $C^2$ function defined on $[t_1,t_2]$ and satisfies
\be \label{eq:bes-5}
w''(t) -(\frac{n-2}{2})^2(1-b e^{-\tau t}) w(t)  +\frac{n(n-2)c}{4} w(t)^{\frac{n+2}{n-2}}\ge 0,
\va_0\ge w(t), \quad \mbox{for }t_1\le t\le t_2.
\ee
Then there exist positive constants $t^*$, $\va_0$, $C_1$ and $C_2$,  depending only on $b, c, n$ and $\tau$,
 such that:
\begin{itemize}
\item[(i)] If   $w'(t_2)\le 0$,
 then there holds
\be\label{eq:nonlinear-ODE-up}
t_2-t\le (\frac{2}{n-2} +C_1 e^{-\tau t_1}) \ln \frac{w(t)}{w(t_2)} +C _2 \quad \mbox{for }t_1\le t\le t_2.
\ee
\item[(ii)] If $w'(t_1)\ge 0$,
then there holds
\be\label{eq:nonlinear-ODE-up-2}
t_2-t\le (\frac{2}{n-2} +C_1 e^{-\tau t_1}) \ln \frac{w(t)}{w(t_1)} +C _2 \quad \mbox{for }t_1\le t\le t_2.
\ee
\end{itemize}
\end{lem}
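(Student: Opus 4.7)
The plan is to freeze the variable coefficient $e^{-\tau t}$ at its value at $t_1$, combine this with a Pohozaev-type energy monotonicity, and close with a Riccati estimate for $-w'/w$. Throughout, set $\alpha=\frac{n-2}{2}$ and $\beta_1:=\alpha^{2}(1-be^{-\tau t_1})$, which is a lower bound for $\alpha^{2}(1-be^{-\tau t})$ on $[t_1,t_2]$. The hypothesis \eqref{eq:bes-5} immediately rewrites as
\[
w''(t)-\beta_1 w(t)+\frac{n(n-2)c}{4}w(t)^{\frac{n+2}{n-2}}\ge \alpha^{2}b\bigl(e^{-\tau t_1}-e^{-\tau t}\bigr)w(t)\ge 0.
\]
First I would observe that for $\varepsilon_0$ small and $t_1\ge t^{*}$ large, \eqref{eq:bes-5} forces $w''(t)\ge (\alpha^{2}/2)w(t)>0$ on $[t_1,t_2]$, so $w'$ is strictly increasing; combined with the endpoint sign, this yields $w'\le 0$ on the whole interval in case (i) and $w'\ge 0$ throughout in case (ii).

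Next, define the energy
\[
F(t):=w'(t)^{2}-\beta_1 w(t)^{2}+\frac{(n-2)^{2}c}{4}w(t)^{\frac{2n}{n-2}}.
\]
A direct computation gives
\[
F'(t)=2w'(t)\,\Bigl[w''(t)-\beta_1 w(t)+\frac{n(n-2)c}{4}w(t)^{\frac{n+2}{n-2}}\Bigr],
\]
and the bracket is non-negative by the previous display. Hence $F$ is non-increasing on $[t_1,t_2]$ in case (i) and non-decreasing in case (ii). In case (i), $F(t)\ge F(t_2)\ge -\beta_1 w(t_2)^{2}$ yields
\[
w'(t)^{2}\ge \beta_1\bigl(w(t)^{2}-w(t_2)^{2}\bigr)-\frac{(n-2)^{2}c}{4}w(t)^{\frac{2n}{n-2}}.
\]

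For $w(t)\ge Aw(t_2)$ with $A$ large, the first term on the right is at least $(1-A^{-2})\beta_1 w(t)^{2}$, while the nonlinear correction is at most $\frac{(n-2)^{2}c}{4}\varepsilon_0^{4/(n-2)}w(t)^{2}$. Selecting $A$ large and $\varepsilon_0$ small so that $A^{-2}$ and $\frac{(n-2)^{2}c\,\varepsilon_0^{4/(n-2)}}{4\beta_1}$ are both dominated by a fixed multiple of $e^{-\tau t^{*}}$, I obtain
\[
-\frac{w'(t)}{w(t)}\ge \alpha\bigl(1-O(e^{-\tau t_1})\bigr)\quad\text{whenever }w(t)\ge Aw(t_2).
\]
Integrating from $t$ to the first point $t^{**}\in(t,t_2)$ at which $w(t^{**})=Aw(t_2)$ (if no such point exists, $t_2-t$ is already bounded by the convexity estimate below), then inverting, gives $t^{**}-t\le \bigl(\frac{2}{n-2}+C_1 e^{-\tau t_1}\bigr)\ln\frac{w(t)}{w(t_2)}+O(1)$. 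On the residual interval $[t^{**},t_2]$, the convexity bound $w''(s)\ge (\alpha^{2}/2)w(s)\ge (\alpha^{2}/2)w(t_2)$ combined with $w(t^{**})=Aw(t_2)$ yields $t_2-t^{**}\le 2\sqrt{A-1}/\alpha$, which is absorbed into $C_2$. This proves (i). For (ii), I apply the time reversal $s\mapsto t_1+t_2-s$: the reversed function satisfies the hypotheses of (i) because $e^{-\tau(t_1+t_2-s)}\le e^{-\tau t_1}$ for $s\in[t_1,t_2]$ preserves the frozen lower bound $\beta_1$, and translating the resulting bound back gives \eqref{eq:nonlinear-ODE-up-2}.

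The main obstacle is that the nonlinear term produces a rate correction of size $O(\varepsilon_0^{4/(n-2)})$, which is a \emph{fixed} positive constant and not a priori comparable to the exponentially small $e^{-\tau t_1}$ correction that the conclusion permits. The resolution is to exploit the fact that $t^{*}$ and $\varepsilon_0$ are to be chosen together as absolute constants depending only on $b,c,n,\tau$: after selecting $t^{*}$ large enough to secure the convexity and the smallness of $be^{-\tau t^{*}}$, one then shrinks $\varepsilon_0$ so that $\varepsilon_0^{4/(n-2)}\le e^{-\tau t^{*}}$. This forces the nonlinear correction to be bounded by $e^{-\tau t^{*}}\le e^{-\tau t_1}$ for every $t_1\ge t^{*}$, and thus to be absorbed into the allowed $C_1 e^{-\tau t_1}$. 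The factor $A^{-2}$ is handled the same way.
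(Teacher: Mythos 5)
Your plan—freeze the exponential coefficient at $t_1$, use the energy $F(t)=w'^{2}-\beta_1 w^{2}+\frac{(n-2)^{2}c}{4}w^{\frac{2n}{n-2}}$, and integrate the resulting Riccati inequality—is conceptually close to the argument the paper sketches at the very end of its proof for the \emph{sharper} variant of the upper bound in \eqref{bad-t}. The paper's proof of the lemma statement itself, by contrast, is built on a direct comparison with an explicit sub-solution $\zeta(t)=B\cosh^{\frac{2-n}{2}}\bigl(a(t-\bar t)\bigr)$, $a=\sqrt{1-Ae^{-\tau t_1}}$, which is a genuinely different mechanism (no energy monotonicity). Your energy inequality up to
\[
w'(t)^{2}\ge \beta_1\bigl(w(t)^{2}-w(t_2)^{2}\bigr)-\tfrac{(n-2)^{2}c}{4}\,w(t)^{\frac{2n}{n-2}}
\]
is correct, and the treatment of the residual interval $[t^{**},t_2]$ by convexity is fine.

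However, the next step contains a real gap, and your proposed repair does not work. You pass from the Riccati inequality to the \emph{pointwise} rate bound $-w'/w\ge\sqrt{(1-A^{-2})\beta_1-\frac{(n-2)^{2}c}{4}\varepsilon_0^{4/(n-2)}}$ on $\{w\ge Aw(t_2)\}$. This converts $A^{-2}$ and $\varepsilon_0^{4/(n-2)}$, which are fixed constants chosen once and for all, into a \emph{rate} deficit. You correctly identify this as the obstacle, but the fix you suggest is backwards: if $\varepsilon_0^{4/(n-2)}\le e^{-\tau t^{*}}$ and $t_1\ge t^{*}$, then $e^{-\tau t_1}\le e^{-\tau t^{*}}$, not $e^{-\tau t^{*}}\le e^{-\tau t_1}$. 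So neither $\varepsilon_0^{4/(n-2)}$ nor $A^{-2}$ is dominated by $e^{-\tau t_1}$ as $t_1\to\infty$, and your argument yields the rate $\frac{2}{n-2}+C\varepsilon_0^{4/(n-2)}+CA^{-2}$, which is not the claimed $\frac{2}{n-2}+C_1e^{-\tau t_1}$. That precision matters for the application in Proposition \ref{prop:liminf-lim}, where $e^{-\tau t_i}(t_i-\bar t_i)=O(1)$ is used.

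The way to close the gap from your Riccati inequality is not to linearize to a constant rate but to integrate the inequality exactly, as the paper does at the end of its proof. After setting $\eta=w(t)/w(t_2)$ and performing the time substitution $s=\frac{n-2}{2}\int_t^{t_2}\sqrt{1-be^{-\tau\hat t}}\,d\hat t$ (which absorbs the $\beta_1$ factor without loss, so the $e^{-\tau t_1}$ correction comes solely from converting $s$ back to $t$), one is left with
\[
\Bigl|\frac{d\eta}{ds}\Bigr|^{2}\ge \eta^{2}-1.1-2c\,w(t_2)^{\frac{4}{n-2}}\bigl(\eta^{\frac{2n}{n-2}}-1\bigr),
\]
and the decisive point is that $\int_{2}^{X}\frac{d\eta}{\sqrt{\eta^{2}-1.1-2cw(t_2)^{4/(n-2)}(\eta^{2n/(n-2)}-1)}}=\ln X+O(1)$ with $O(1)$ uniform, because the nonlinear correction to the integrand is comparable to $w(t_2)^{4/(n-2)}\eta^{\frac{4}{n-2}-1}$, whose primitive is $O\bigl((w(t_2)\eta)^{4/(n-2)}\bigr)=O(\varepsilon_0^{4/(n-2)})$—a bounded additive contribution, not a multiplicative factor on $\ln X$. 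Both the nonlinear correction and the $-1.1$ term therefore land in $C_2$, not in the rate.
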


\begin{proof}  The estimates in this lemma were proved by Marques \cite{f-mar} for $\tau=2$, which in turn is based  on the argument of Chen-Lin \cite{ChenLin3}. These upper bounds are also adequate for our purposes, so
we only sketch a different proof following the previous proof of Lemma \ref{lem:bessel}.
The upper bound in our
\eqref{bad-t} is stronger than  that in \eqref{eq:nonlinear-ODE-up}, where our set up also gives us
$w'(t)<0$ for $t_1<t<t_2$. Although the stronger version is not needed for
our proof of the main theorems of this paper, we will describe how to prove it at the end.

Let
\[
\zeta(t)=B\cosh^{\frac{2-n}{2}} (a (t-\bar t)),
\]
where $a=\sqrt{1-Ae^{-\tau t_1}}$  for some $A>0$ to be fixed,
$B^{\frac{4}{n-2}}c=1$, and $\bar t\ge t_2$ such that $\zeta(t_2)=w(t_2)$.  Then by a direct calculation similar
to the one in the proof for the previous Lemma, we have

\begin{align*}
&\zeta''(t) -((\frac{n-2}{2})^2-b e^{-\tau t}) \zeta(t)  +\frac{n(n-2)c}{4} \zeta^{\frac{n+2}{n-2}} \\&
= \frac{(n-2)^2B}{4} \left\{ -\frac{n}{n-2}\frac{a^2-1}{ \cosh^{2} (a(t-\bar t))}  +(a^2-1+\frac{4}{(n-2)^2} b e^{-\tau t})\right\}   \cosh^{\frac{2-n}{2}} (a(t-\bar t))
 \\&\le \frac{(n-2)^2 B}{4}\Big( \frac{n}{n-2}A e^{-\tau t_1} C\va_0^{\frac{n-2}{2}}- A e^{-\tau t_1}
 +\frac{4}{(n-2)^2}b e^{-\tau t} \Big)\cosh^{\frac{2-n}{2}} (a(t-\bar t)) \\&
 \le 0 ,
\end{align*}
if we take $\va_0$ to be  sufficiently small so that $ \frac{n}{n-2} C\va_0^{\frac{n-2}{2}}<0.1$, say; and take $A$
such that $0.9 A>b$.  Moreover,
\[
\zeta'(t_2) \ge 0,
\]
since $\bar t\ge t_2$.

Let $z=w-\zeta$. It follows  that
\[
z''(t) -(\frac{n-2}{2})^2(1-b e^{-\tau t}) z(t)  +\frac{n(n+2)}{4} \xi(t)^{\frac{4}{n-2}} z(t)\ge 0,
\]
\[
z(t_2)=0, \quad z'(t_2)\le 0,
\]
where
$$n(n+2) \xi(t)^{\frac{4}{n-2}}=\left\{\begin{array}{ll}
\displaystyle{n(n-2)\frac{w(t)^{\frac{n+2}{n-2}}-\zeta(t)^{\frac{n+2}{n-2}}}{w-\zeta}}, \quad \mbox{if }\quad w(t)\neq \zeta(t),\\[2mm]
n(n+2)w(t)^{\frac{4}{n-2}},\quad \mbox{if}\quad w(t)=\zeta(t).
\end{array}
\right.
$$
Note that $\zeta(t), w(t)\le \va_0$ for $t_1\le t\le t_2$, so by taking $\va_0$ small and $t^*$ large, we then have
\[
(\frac{n-2}{2})^2(1-b e^{-\tau t})  -\frac{n(n+2)}{4} \xi(t)^{\frac{4}{n-2}}\ge 0.
\]
By Lemma \ref{lem:m-p}, we have $w-\zeta\ge 0$ in $[t_1,t_2]$. Hence,  \eqref{eq:nonlinear-ODE-up} follows immediately.

The proof of \eqref{eq:nonlinear-ODE-up-2} is the same.

Our proof for the upper bound in \eqref{bad-t} is a modification of the proof in \cite{f-mar}. We use
the set up in \eqref{eq:nonlinear-ODE-up}, identifying $\bar t_i=t_1$
and $t_i=t_2$. Multiplying both sides of \eqref{eq:bes-5}
by $2w'(t)<0$, we have
\[
\begin{split}
0 &\ge \left[ w'(t)^2 -(\frac{n-2}{2})^2(1-b e^{-\tau t})w(t)^2 +c (\frac{n-2}{2})^2 w(t)^{\frac{2n}{n-2}}\right]'
+ (\frac{n-2}{2})^2 b\tau w(t)^2\\
&\ge \left[ w'(t)^2 -(\frac{n-2}{2})^2(1-b e^{-\tau t})w(t)^2 +c (\frac{n-2}{2})^2 w(t)^{\frac{2n}{n-2}}\right]'
\end{split}
\]
so $w'(t)^2 -(\frac{n-2}{2})^2(1-b e^{-\tau t})w(t)^2 +c (\frac{n-2}{2})^2 w(t)^{\frac{2n}{n-2}}$ is non-increasing
in $t$, and
\[
\begin{split}
& w'(t)^2 -(\frac{n-2}{2})^2(1-b e^{-\tau t})w(t)^2 +c (\frac{n-2}{2})^2 w(t)^{\frac{2n}{n-2}} \\
\ge &  -(\frac{n-2}{2})^2(1-b e^{-\tau t_2})w(t_2)^2 +c (\frac{n-2}{2})^2 w(t_2)^{\frac{2n}{n-2}},\\
\end{split}
\]
from which we obtain
\[
\begin{split}
w'(t)^2 \ge & (\frac{n-2}{2})^2(1-b e^{-\tau t})\left[ w(t)^2-w(t_2)^2\right]
- c (\frac{n-2}{2})^2\left[ w(t)^{\frac{2n}{n-2}}- w(t_2)^{\frac{2n}{n-2}}\right] \\
&+ b (\frac{n-2}{2})^2 ( e^{-\tau t_2} - e^{-\tau t})w(t_2)^2.\\
\end{split}
\]
When $w(t)/w(t_2)\le 2$, then for $t^*$ sufficiently large, $\va_0>0$ sufficiently small, there  exists $0<\delta<1$
such that for $t<s<t_2$,
we have $w''(s)-(\frac{n-2}{2})^2(1-\delta)^2w(s)\ge0$, so by Lemma \ref{lem:m-p},
$w(s)\ge w(t_2) \cosh[\frac{n-2}{2}(1-\delta)(s-t_2)]$, from which we get
$2\ge \cosh[\frac{n-2}{2}(1-\delta)(s-t_2)]$.  This gives an absolute upper bound for $t_2-t'$, where
 $t'$ is defined by $t<t'<t_2$ such that $w(t')/w(t_2)=2$.

 For $t_1<t<t'$, set $\eta(t)=w(t)/w(t_2)$, then $\eta(t)\ge 2$, and
 \[
 \eta'(t)^2\ge (\frac{n-2}{2})^2\left\{ (1-b e^{-\tau t})\left[\eta(t)^2-1\right]
 -cw(t_2)^{\frac{4}{n-2}} \left[ \eta(t)^{\frac{2n}{n-2}}-1\right]
 + b  ( e^{-\tau t_2} - e^{-\tau t})\right\}.
 \]
For $t^*$ sufficiently large, we have
$b  ( e^{-\tau t_2} - e^{-\tau t})\ge -0.1 (1-b e^{-\tau t})$ and $ 1-b e^{-\tau t} \ge 1/2$, so we get
\[
\eta'(t)^2\ge (\frac{n-2}{2})^2  (1-b e^{-\tau t}) \left\{\eta(t)^2-1.1-2c w(t_2)^{\frac{4}{n-2}} \left[ \eta(t)^{\frac{2n}{n-2}}-1\right]\right\}.
\]
Now if we introduce $s=\frac{n-2}{2} \int_t^{t_2} \sqrt{ 1-b e^{-\tau \hat t}} d\hat t$, then we get
\[
\big| \frac{d \eta}{ds}\big|^2\ge \eta^2-1.1-2c w(t_2)^{\frac{4}{n-2}} \left[ \eta^{\frac{2n}{n-2}}-1\right].
\]
Set $s'=\frac{n-2}{2} \int_{t'}^{t_2} \sqrt{ 1-b e^{-\tau \hat t}} d\hat t$. Then, on the one hand,
\[
s-s'\le \int_{2}^{w(t)/w(t_2)} \frac{d\eta}{\sqrt{ \eta^2-1.1-2c w(t_2)^{\frac{4}{n-2}} \left[ \eta^{\frac{2n}{n-2}}-1\right]}}
\le \log \frac{w(t)}{w(t_2)} +C
\]
for some constant $C$, where the integral on the right is estimated in a similar way
as in \cite{ChenLin3} and \cite{f-mar}. On the other hand,
$s-s'=\frac{n-2}{2} \int_t^{t'} \sqrt{ 1-b e^{-\tau \hat t}} d\hat t= \frac{n-2}{2} \left[ t'-t +O(1)\right]$.
The upper bound in \eqref{bad-t} now follows.
\end{proof}

\small

\bigskip

\noindent Z.-C. Han

\noindent Department of Mathematics, Rutgers University\\
Hill Center-Busch Campus, 110 Frelinghuysen Road, Piscataway, NJ 08854\\[1mm]
Email: \textsf{zchan@math.rutgers.edu}

\medskip

\noindent J. Xiong

\noindent School of Mathematical Sciences, Beijing Normal University\\
Beijing 100875, China\\[1mm]
Email: \textsf{jx@bnu.edu.cn}

\medskip

\noindent L. Zhang

\noindent Department of Mathematics, University of Florida\\
        358 Little Hall P.O. Box 118105\\
        Gainesville FL 32611-8105, USA\\[1mm]
Email: \textsf{leizhang@ufl.edu}

\end{document}